\newtheorem{theorem}{Theorem}[section]
\newtheorem{proposition}[theorem]{Proposition}
\newtheorem{corollary}[theorem]{Corollary}
\newtheorem{lemma}[theorem]{Lemma}
\theoremstyle{definition}
\newtheorem{remark}[theorem]{Remark}
\newtheorem{conjecture/question}[theorem]{Conjecture/Question}
\newtheorem{remark/definition}[theorem]{Remark/Definition}
\newtheorem{terminology/notation}[theorem]{Terminology/Notation}
\def\PP{{\textbf P}}
\def\OO{\mathcal{O}}
\def\cA{\mathcal{A}}
\def\F{\mathcal{F}}
\def\W{\mathcal{W}}
\def\T{\mathfrak{Th}}
\def\cM{\mathcal{M}}
\def\cR{\mathcal{R}}
\def\cZ{\mathcal{Z}}
\def\cX{\mathcal{X}}
\def\bm{{\bf{M}}}
\def\Pic0{{\rm Pic}^0(X)}
\def\ww{\overline{\mathcal{W}}}
\def\ff{\overline{\mathcal{F}}}
\def\mm{\overline{\mathcal{M}}}
\def\cc{\overline{\mathcal{C}}}
\def\tf{\widetilde{\mathcal{F}}}
\def\Thet{{\bf{\Theta}}}
\begin{document}
\title{The universal theta divisor over the moduli space of curves}

\author[G. Farkas]{Gavril Farkas}

\address{Humboldt-Universit\"at zu Berlin, Institut F\"ur Mathematik,  Unter den Linden 6
\hfill \newline\texttt{}
 \indent 10099 Berlin, Germany} \email{{\tt farkas@math.hu-berlin.de}}
\thanks{}

\author[A. Verra]{Alessandro Verra}
\address{Universit\'a Roma Tre, Dipartimento di Matematica, Largo San Leonardo Murialdo \hfill
\indent 1-00146 Roma, Italy}
 \email{{\tt
verra@mat.unirom3.it}}

\maketitle

\begin{abstract} By computing the class of the universal antiramification locus of the Gauss map, we obtain a complete birational classification by Kodaira dimension of the universal theta divisor over the moduli space of curves.
\end{abstract}

\begin{otherlanguage}{french}
\begin{abstract}
En calculant la classe universelle du lieu d'antiramification de l'application de Gauss, nous obtenons
une classification birationnelle compl\`ete par la dimension de Kodaira du diviseur th\^eta universel sur l'espace des modules de
courbes.
\end{abstract}
\end{otherlanguage}

The universal theta divisor over the moduli space $\cA_g$ of principally polarized abelian varieties of dimension $g$ is the divisor ${\bf{\Theta}}_g$ inside the universal abelian variety $\xi:\mathcal{X}_g\rightarrow \cA_g$ characterized by the following two properties:
\vskip 2pt

\noindent (i) ${{\bf{\Theta}}_{g}} _{| \xi^{-1}([A, \Theta])}=
\Theta$, for every principally polarized abelian variety $[A, \Theta]\in \cA_g$.

\noindent (ii) The restriction $s^*({\bf{\Theta}}_g)$ along the zero section $s:\cA_g\rightarrow \mathcal{X}_g$ is trivial on $\cA_g$.
 \vskip 3pt
 The study of the geometry of ${\bf{\Theta}}_g$  mirrors
that of $\cA_g$ itself. It is known that ${\bf{\Theta}}_g$ is unirational for $g\leq 4$; the case $g\leq 3$ is classical,  for $g=4$ we refer to \cite{Ve}. Whenever $\cA_g$ is known to be of general type (that is, in the range $g\geq 7$, cf. \cite{Fr}, \cite{Mu}, \cite{T}), one can use Viehweg's additivity theorem \cite{Vi} for the fibre space ${\bf{\Theta}}_g\rightarrow \cA_g$ to conclude that ${\bf{\Theta}}_g$ is of general type as well. Our first result concerns the birational type of $\Thet_5$.
\begin{theorem}\label{gen5}
The universal theta divisor $\Thet_5$ is uniruled.
\end{theorem}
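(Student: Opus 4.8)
The goal is equivalent, on a smooth projective model, to proving that $K_{\Thet_5}$ is not pseudoeffective, by the theorem of Boucksom--Demailly--P\u{a}un--Peternell; I will establish this by expressing $K_{\Thet_5}$ through the antiramification divisor of the universal Gauss map and testing it against a movable curve. First I would reduce the problem from $\cA_5$ to a space of curves. The Prym map $\P\colon\cR_6\to\cA_5$ is dominant and generically finite, of degree $27$, and for a general $(C,\eta)\in\cR_6$ with \'etale double cover $\pi\colon\widetilde C\to C$ the Prym--theta pair $(P,\Xi)=\P(C,\eta)$ is exactly the target ppav $[A,\Theta]$. Thus the relative Prym--theta divisor $\Theta_{\cR_6}\to\cR_6$ carries a dominant, generically finite rational map $\Theta_{\cR_6}\dashrightarrow\Thet_5$, and since uniruledness propagates along such maps it suffices to treat $\Theta_{\cR_6}$; here the genus-$11$ curve $\widetilde C$ and Mumford's description of $\Xi$ (a general point is a base-point-free pencil $L$ of degree $10$ on $\widetilde C$ with $\mathrm{Nm}_\pi(L)=\omega_C$ and $h^0(L)=2$) give an explicit handle.

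The numerical input is the canonical class of $\Thet_5$. The relative Gauss map $\gamma\colon\Thet_5\dashrightarrow\PP(\mathbb E^\vee)$ over $\cA_5$, sending a smooth point to its translated tangent space inside $\PP(T_0A)$, is a dominant generically finite rational map between varieties of dimension $\dim\cA_5+4$. Applying the ramification formula on a resolution gives $K_{\Thet_5}=\gamma^{\ast}K_{\PP(\mathbb E^\vee)}+R$, where $R$ is the ramification divisor and $\gamma^{\ast}\OO(1)$ is the theta class restricted to $\Thet_5$. Unwinding the relative Euler sequence this reads, up to Hodge-class terms pulled back from $\cA_5$, as $K_{\Thet_5}\equiv R-5\,\mathbf t+\pi^{\ast}(\cdots)$ with $\mathbf t$ the (effective) theta class. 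The content of the paper is the exact determination of the effective class of the universal antiramification locus representing $R$ on a suitable compactification.

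To conclude for $g=5$ I would exhibit a movable curve $B$ covering $\Thet_5$ on which this class is negative. Such a $B$ is produced from the curve side: a general pencil of genus-$6$ Prym curves --- conveniently realized through the quintic del Pezzo model $C\in|-2K|$ of genus-$6$ curves --- together with a section of the relative Prym--theta divisor furnished by a degree-$10$ pencil on $\widetilde C_t$ varying algebraically with $t$. Because the underlying ppav $\P(C_t,\eta_t)$ genuinely moves, $B$ is not contracted to a point of $\Thet_5$, and letting the pencil vary shows $B$ sweeps out $\Thet_5$. Intersecting the formula of the previous paragraph with $B$, the term $-5\,\mathbf t\cdot B$ dominates the (computed) contributions of $R$ and of the Hodge classes, so that $K_{\Thet_5}\cdot B<0$ and $K_{\Thet_5}$ is not pseudoeffective.

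The main obstacle is the exact class computation of the universal antiramification divisor. On a toroidal compactification one must account for the boundary corrections, for the singularities of the universal theta, and for the indeterminacy of $\gamma$ along the locus where the Gauss map degenerates; these corrections are exactly what the computation in the abstract supplies. A secondary difficulty, which I expect to be routine by comparison, is to make the covering curve $B$ genuinely rational and movable --- this requires controlling the monodromy of the $2$-torsion $\eta_t$ along the pencil and checking transversality of $B$ to the fibration $\Thet_5\to\cA_5$ --- and then to evaluate the finitely many intersection numbers $R\cdot B$, $\mathbf t\cdot B$ and $\lambda\cdot B$ that enter the final inequality.
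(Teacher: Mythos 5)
Your opening reduction (pass to the Prym side via the degree-$27$ map $P:\cR_6\rightarrow \cA_5$ and work with the universal Prym theta divisor) is exactly the paper's first step, and your instinct to use pencils of genus-$6$ curves on a quintic del Pezzo surface is also the right raw material. But the core of your argument -- the BDPP criterion applied to $K_{\Thet_5}\equiv R-5\,\mathbf{t}+\sigma^*(\cdots)$ -- has a genuine gap that your own sketch cannot close. Since $B$ is movable and $R$ is effective, $R\cdot B\geq 0$ works \emph{against} the inequality you need, so everything hinges on actually computing $R\cdot B$ and showing $5\,\mathbf{t}\cdot B$ beats it; you never do this, and you have no mechanism to do it. The class you invoke as ``the content of the paper'' -- the universal antiramification class of Theorem \ref{gaussram} -- is computed on $\mm_{g,g-1}$, i.e.\ over the moduli space of \emph{curves}, and is used only for the general-type statements in genus $g\geq 12$; it is not the ramification class of the Gauss map on any compactification of $\Thet_5$ over $\cA_5$ (for $g=5$ the general ppav is not a Jacobian, and on the Prym side the relevant object would be the Gauss map of a Prym theta divisor, whose ramification class is computed nowhere in the paper). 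So the decisive intersection numbers are not ``routine evaluation''; they are simply unavailable.

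The second gap is that the construction of $B$ itself is the hard geometric content, not a secondary difficulty. A section of the $2$-torsion $\eta_t$ along a \emph{general} pencil in $|-2K_S|$ does not exist: the mod-$2$ monodromy of such a pencil is large, and killing it by a base change destroys rationality of the base. The paper's proof solves precisely this problem, and in doing so makes your whole canonical-class detour unnecessary: by Proposition \ref{rk3}, a general $L\in \Xi(C,\eta)$ determines a \emph{rank $3$} quadric $Q_L$ with $Q_L\cdot C=2d_L$, where $d_L\in |K_C\otimes \eta|$; one then takes the special pencil $\PP_L\subset |-2K_S|$ spanned by $C$ and $S\cap Q_L$, which cuts out the \emph{fixed} divisor $2d_L$ on every member $C'$. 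The fixed divisor $d_L$ then defines the torsion point $\eta'$ on each $C'$ (via $d_L\in |K_{C'}\otimes \eta'|$), and the rulings of $\tilde{Q}_L$ cut out the theta point $L'\in \Xi(C',\eta')$, so the triples $[C',\eta',L']$ trace a rational curve through the general point of the universal Prym theta divisor. That is already uniruledness, with no recourse to BDPP, no canonical class, and no ramification divisor. In short: the one step you defer as routine (an algebraic family $(\eta_t, L_t)$ over a rational pencil) is the theorem, and once it is done your numerical framework is superfluous.
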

By making use of the generically finite Prym map $P:\cR_6\rightarrow \cA_5$, we show that the pull-back of the universal theta divisor over the Prym moduli space $\cR_6$, that is,
$$\Thet_5^{\mathrm{p}}:=\Thet_5\times_{\cA_5} \cR_6$$ is uniruled. We sketch the idea of the proof and refer to Section 4 for details.

We fix a general element $[C, \eta]\in \cR_6$, inducing an \'etale double cover $f:\tilde{C}\rightarrow C$. The canonical curve $C\subset \PP^5$ can be viewed as a quadratic section of a smooth quintic del Pezzo surface $S\subset \PP^5$. A general element $L$ on the theta divisor $\Xi(C, \eta)$ of the Prym variety $P(C, \eta)$ is a line bundle $L$ on $\tilde{C}$ such that $\mbox{Nm}_f(L)=K_C$ and $h^0(\tilde{C}, L)=2$.
We associate to this data a rank $3$ quadric $Q_L\in \PP \mbox{Sym}^2 H^0(K_C)$ everywhere tangent to the canonical curve $C\subset \PP^5$ along a Prym canonical divisor, that is, $$C\cdot Q_L=2d_L,$$ where $d_L\in |K_C\otimes \eta|$. The pencil in $|-2K_S|$ generated by the curves $C$ and $S\cdot Q_L$ induces a rational curve in the universal Prym theta divisor $\Thet_5^{\mathrm{p}}$.
\vskip 3pt

Next we obtain a birational classification of the universal theta divisor $$\T_g:=\cM_g\times_{\cA_g} {\bf{\Theta}}_g$$
over the moduli space of curves. If $[C]\in \cM_g$ is a smooth curve, the Abel-Jacobi map $C_{g-1}\rightarrow \mbox{Pic}^{g-1}(C)$ provides a resolution of singularities of the theta divisor $\Theta_C$ of the Jacobian of $C$. Thus one may regard the degree $g-1$ universal symmetric product $\cc_{g, g-1}:=\mm_{g, g-1}/\mathfrak S_{g-1}$ as a birational model of $\T_g$ (having only finite quotient singularities), and ask for the place of $\T_g$ in the classification of varieties. We provide a complete answer to this question. For small genus, $\T_g$ enjoys rationality properties.
\begin{theorem}\label{class}
$\T_g$ is unirational for $g\leq 9$ and uniruled for $g\leq 11$.
\end{theorem}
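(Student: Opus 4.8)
The plan is to work throughout with the birational model $\cc_{g,g-1}$ of $\T_g$ singled out in the introduction, whose points are pairs $([C],D)$ with $D$ effective of degree $g-1$ on $C$; for general $D$ one has $h^0(C,D)=1$, so $D$ recovers a general point of the theta divisor. The two ranges call for opposite strategies. For unirationality I would \emph{keep the curve flexible and let the points roam}, using Mukai's rational models of canonical curves to parametrise $(C,D)$ by choosing the $g-1$ points freely on a rational ambient variety. For uniruledness I would instead \emph{fix the general divisor $D$ and deform the curve}, producing a rational curve $t\mapsto(C_t,D)$ by sweeping $C$ through $D$ inside a surface on which $C$ moves.

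For the range $g\le 9$ I would exploit Mukai's models. For $7\le g\le 9$ the general canonical curve $C\subset\PP^{g-1}$ is a transverse linear section $C=X\cap\Lambda$ of a fixed rational homogeneous variety $X\subset\PP^N$ — the spinor tenfold in $\PP^{15}$, the Grassmannian $\GG(2,6)\subset\PP^{14}$, and the Lagrangian Grassmannian in $\PP^{13}$ for $g=7,8,9$ — while for $g\le 6$ the classical rational models (plane quartics, quadric sections of del Pezzo surfaces) serve the same purpose. In each case $X$ is rational, and I would parametrise $\cc_{g,g-1}$ by the incidence variety of tuples $(p_1,\dots,p_{g-1};\Lambda)$ with $p_i\in X$ and $\Lambda$ a linear section through $\langle p_1,\dots,p_{g-1}\rangle$ cutting a smooth canonical curve, sending the tuple to $\bigl(X\cap\Lambda,\ p_1+\cdots+p_{g-1}\bigr)$. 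Because the $p_i$ vary freely on the rational variety $X$, the divisor $D=\sum p_i$ is a general effective divisor of degree $g-1$, so $h^0(C,D)=1$ and we land on the main stratum; and the source, a projective bundle over $X^{g-1}$, is rational. Dominance reduces to the Mukai dominance $\{\Lambda\}\dashrightarrow\cM_g$ enhanced by the freedom in the $p_i$, and this is the point I would verify with care, the section-count being sharp precisely at $g=9$.

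For uniruledness the mechanism is a Lefschetz-type pencil. Given a general $([C],D)$ with $g\le 11$ and $g\ne 10$, the general canonical curve is a hyperplane section $C=S\cap H$ of a K3 surface $S\subset\PP^g$ of genus $g$, so that $\dim|C|_S=g$. Since $h^0(C,D)=1$, the span $\langle D\rangle=\PP^{g-2}\subset H$ has codimension two in $\PP^g$, and the pencil of hyperplanes through $\langle D\rangle$ cuts out a pencil $\{C_t=S\cap H_t\}$ of genus-$g$ canonical curves whose common base locus $S\cap\langle D\rangle$ contains $D$. The assignment $t\mapsto(C_t,D)$ is then a rational curve in $\cc_{g,g-1}$ through the given point, with $h^0(C_t,D)=1$ for general $t$ by semicontinuity. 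For general $S$ and general $\langle D\rangle$ this pencil is non-isotrivial, so the rational curve is non-constant; verifying non-isotriviality is the one analytic check here, and it yields uniruledness for every $g\le 11$ with $g\ne 10$.

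The decisive obstruction is $g=10$, exactly the Mukai exception where the general canonical curve lies on no K3 surface, so the sweep above has no surface to run on. One cannot substitute a rational surface: on the blow-up of $\PP^2$ at the eighteen nodes of the plane nonic the curve is rigid ($\dim|C|_S=0$), and on a Hirzebruch surface realising the $g^1_6$ the smooth sections are trigonal or of gonality five (or reducible), so again $\dim|C|_S$ is far too small to pass through nine general points. I expect this case to require the paper's computation of the class of the universal antiramification locus of the Gauss map: inserted into an expression for $K_{\T_{10}}$, it should exhibit $K_{\T_{10}}$ as non-pseudoeffective and thereby force uniruledness directly, with no explicit surface sweep. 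The same canonical-class input would re-derive uniruledness uniformly for $g\le 11$ and locate the onset of nonnegative Kodaira dimension beyond, which is what the complete classification demands.
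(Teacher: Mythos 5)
Your treatment of the range $g\leq 9$ is exactly the paper's argument: the incidence correspondence of $(g-1)$-tuples of points on the Mukai variety together with a linear section through them is precisely the variety $\Sigma$ the paper uses, a projective (Grassmann) bundle over the rational variety $V_g^{g-1}$ dominating $\T_g$. Your $g=11$ argument is also sound: the pencil of hyperplane sections of a genus $11$ K3 surface through the span $\langle D\rangle\cong \PP^9$ of a general degree-$10$ divisor is exactly the construction underlying the uniruledness of $\mm_{11,10}$, which the paper simply cites from \cite{FP}, \cite{FV2}; here you give a direct construction where the paper outsources, and the two are morally identical.

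The genuine gap is $g=10$, and your proposed repair cannot work. You correctly diagnose that the general genus $10$ curve lies on no K3 surface and that the available surface models are too rigid, but the suggestion that Theorem \ref{gaussram} could be ``inserted into an expression for $K_{\T_{10}}$'' to exhibit $K_{\T_{10}}$ as non-pseudoeffective is structurally backwards. The class $[\overline{\mathfrak{Antram}}_g]$ is the class of an \emph{effective} divisor; such computations can only bound the effective cone from below, i.e., they serve to write $\pi^*(K_{\cc_{g,g-1}})$ as a positive combination of a big class, effective classes and boundaries, proving \emph{bigness} (this is exactly how the paper gets general type for $g\geq 12$ via Corollary \ref{condition}). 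No amount of knowledge of effective divisor classes can certify that $K$ is \emph{not} pseudoeffective: by duality (BDPP) that requires producing a family of curves \emph{covering} $\cc_{10,9}$ and meeting $K$ negatively, which is essentially the uniruledness statement itself, so your proposed route is circular. Note also that the covering curves of Theorem \ref{extremal} sweep out only the divisor $\mathfrak{G}_g$, not the whole space, so they are of no help; and the failure of the slope inequality in Corollary \ref{condition} for $g\leq 11$ merely says the general-type criterion is inapplicable, not that uniruledness follows. What is actually needed at $g=10$ is the nontrivial external input the paper invokes: the uniruledness of $\mm_{10,9}$, established in \cite{FP}, \cite{FV2} by a genuinely geometric construction, from which uniruledness of $\T_{10}=\cc_{10,9}$ descends via the finite quotient map $\pi$. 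Without that (or an equivalent construction of rational curves through a general point of $\cc_{10,9}$), your proof covers every case except the one you yourself identified as decisive.
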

The first part of the theorem is a consequence of Mukai's work \cite{M1}, \cite{M2} on representing canonical curves with general moduli as linear sections of certain homogeneous varieties. When $g\leq 9$, there exists a Fano variety $V_g\subset \PP^{N_g}$ of dimension $n_g:=N_g-g+2$ and index $n_g-2$, such that general $1$-dimensional complete intersections of $V_g$ are canonical curves $[C]\in \cM_g$ having general moduli. The  correspondence
$$\Sigma:=\Bigl\{\bigl((x_1, \ldots, x_{g-1}), \Lambda\bigr)\in V_g^{g-1}\times G(g, N_g+1): x_i\in \Lambda, \mbox{ for } i=1, \ldots, g-1\Bigr\}$$ maps dominantly onto $\T_g$ via the map $\bigl((x_1, \ldots, x_{g-1}), \Lambda\bigr)\mapsto [V_g\cap \Lambda, x_1+ \cdots+x_{g-1}]$.
Since $\Sigma$ is a Grassmann bundle over the rational variety $V_g^{g-1}$, it follows that $\T_g$ is unirational in the range $g\leq 9$. The cases $g=10, 11$ are settled by the observation that in this range the space $\mm_{g, g-1}$ is uniruled, see \cite{FP}, \cite{FV2}.

For the remaining genera, we achieve a complete classification. This is the main result of the paper.
\begin{theorem}\label{gentyp}
The universal theta divisor $\T_g$ is a variety of general type for $g\geq 12$.
\end{theorem}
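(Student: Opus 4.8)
The plan is to descend to the birational model $\cc_{g,g-1}=\mm_{g,g-1}/\mathfrak S_{g-1}$ of $\T_g$ furnished by the Abel--Jacobi resolution of the theta divisor, and to prove that its canonical class is big. Because $\cc_{g,g-1}$ carries only finite quotient singularities, these are canonical, so pluricanonical forms computed on $\cc_{g,g-1}$ agree with those on any desingularization, and it suffices to show that $K_{\cc_{g,g-1}}$ is big. The one analytic subtlety, namely the extension of pluricanonical forms across the boundary of $\mm_g$ and across the big diagonal of the symmetric product, I would handle exactly as in Harris--Mumford and Logan, once a suitable effective representative of $K$ with controlled boundary coefficients is in hand.

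I would organize the computation around the projection $\pi\colon \cc_{g,g-1}\to \cM_g$ and the universal Gauss map. Fibrewise the morphism $C_{g-1}\to \Theta_C$ is a small resolution (for a general curve its exceptional locus over $W^1_{g-1}(C)$ has codimension two), so as a divisor class the fibre canonical class coincides with the restriction of the relative theta class $\Theta$; combining this with the relative dualizing sheaf of the family and with $K_{\mm_g}=13\lambda-2\delta$ gives a decomposition
\[
K_{\cc_{g,g-1}}\equiv \pi^{*}M+\Theta,\qquad M=13\lambda-2\delta+\kappa ,
\]
where $\kappa$ is a correction in $\lambda$ and $\delta$ produced by the Hodge bundle together with the normalization $s^{*}\Thet_g=0$ of the universal theta divisor. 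Since $\Theta$ is relatively ample and restricts to a principal polarization on each abelian fibre, its fibrewise self-intersection is positive, and a standard expansion of $(\pi^{*}M+\Theta)^{\dim \cc_{g,g-1}}$ shows that $K_{\cc_{g,g-1}}$ is big as soon as the base class $M$ is big on $\mm_g$. The whole problem is thereby reduced to pinning down $\kappa$ and then verifying the bigness of $M$.

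The technical heart, matching the abstract, is the computation of the universal antiramification locus $\rr$ of the Gauss map. On each fibre $\gamma\colon C_{g-1}\dashrightarrow \PP^{g-1}$ is given, up to base loci of codimension two, by the canonical system $|\omega_{C_{g-1}}|$, so $\gamma^{*}\OO(1)\equiv \Theta$ as a divisor class and the ramification formula exhibits $\rr$ as an effective divisor with $\rr\equiv (g+1)\Theta$ on fibres. Globalizing this relation and computing $[\rr]=\alpha\lambda+\beta\Theta+\sum_i\gamma_i\delta_i$ determines $\kappa$ and the precise boundary coefficients. This is where the real work resides: one must track the boundary of $\mm_{g,g-1}$, the non-reduced Gauss fibres, and the residual tangency contributions, and then confirm that the boundary coefficients in the resulting effective expression for $K_{\cc_{g,g-1}}$ dominate the discrepancies, so that the pluricanonical forms genuinely extend.

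Finally I would test the bigness of $M=(13+c)\lambda-2\delta-\cdots$ on $\mm_g$. Here $\lambda$ is big (the Torelli map is generically injective and $\lambda$ is ample on $\cA_g$), while $13\lambda-2\delta$ on its own is big only for $g\geq 22$; the positive correction $c>0$ coming from $\kappa$ raises the effective slope of $M$ above the Brill--Noether bound $6+12/(g+1)$, and I expect this slope inequality to hold precisely when $g\geq 12$, consistently with the uniruledness of $\T_g$ for $g\leq 11$ established in Theorem~\ref{class}. The main obstacle is unquestionably the explicit determination of $[\rr]$ together with its boundary bookkeeping, since it simultaneously fixes the coefficient $c$ that forces the threshold $g=12$ and supplies the boundary control needed to extend the pluricanonical forms.
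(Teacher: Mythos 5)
Your proposal shares the paper's skeleton (pass to the model $\cc_{g,g-1}$, prove $K_{\cc_{g,g-1}}$ is big, make the antiramification class of the Gauss map the technical heart), but the central reduction you build on does not work, and it conceals exactly the point where the antiramification divisor is genuinely needed. Bigness of $K_{\cc_{g,g-1}}$ does not follow from bigness of a base class $M$ in a decomposition $K_{\cc_{g,g-1}}\equiv \varphi^{*}M+\Theta$: expanding the top self-intersection of $\varphi^{*}M+\Theta$ certifies bigness only when the class is nef, and the relative theta class, extended over all of $\mm_g$, is neither nef nor known to be pseudoeffective. Indeed, by Hurwitz $\pi^{*}(K_{\cc_{g,g-1}})=K_{\mm_{g,g-1}}-\delta_{0:2}=13\lambda+\sum_i\psi_i-2\delta_{\mathrm{irr}}-3\delta_{0:2}-\cdots$, so your $\Theta$ pulls back (up to $\phi$-pullbacks) to $\sum_i\psi_i-3\delta_{0:2}-2\sum_{s\geq 3}\delta_{0:s}+\cdots$; establishing global positivity for this fibrewise-$\theta$ class is the hard part of the theorem, not an input to it. The actual mechanism, which you never isolate, is the numerical condition (\ref{ineqnec}): to write $\pi^{*}K_{\cc_{g,g-1}}$ as a small positive multiple of the big and nef class $\sum_i\psi_i$ plus effective classes, the term $-3\delta_{0:2}$ must be absorbed by an effective divisor on $\cc_{g,g-1}$ whose class $a\lambda+c\sum_i\psi_i-b_{\mathrm{irr}}\delta_{\mathrm{irr}}-\sum b_{i:s}\delta_{i:s}$ satisfies $3c<b_{0:2}$, equivalently restricts on a general fibre to the fourth quadrant $\alpha\theta-\beta x$ ($\alpha,\beta>0$) of $N^{1}(C_{g-1})$. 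The classes $\lambda$, $\sum_i\psi_i$, the boundary, and every pullback $\phi^{*}(D)$ with $D\in\mathrm{Eff}(\mm_g)$ have $b_{0:2}=0$, so no slope information on $\mm_g$ alone can do this; the divisor $\overline{\mathfrak{Antram}}_g$ of Theorem \ref{gaussram} satisfies it by the thinnest of margins, $3c=12g-24<12g-22=b_{0:2}$, and this margin is what produces the threshold $s(\mm_g)<\frac{84g-185}{12g-25}$ of Corollary \ref{condition}, hence $g\geq 12$. Relatedly, your fibrewise identification is wrong: $(g+1)\theta$ is the class of the \emph{ramification} divisor, $\mathrm{Ram}(\gamma)=K_{C_{g-1}}-\gamma^{*}K_{(\PP^{g-1})^{\vee}}$, whereas the antiramification class is $\gamma^{*}\mathrm{Br}(\gamma)-2\,\mathrm{Ram}(\gamma)=(6g-6)\theta-2(g+1)\theta=(4g-8)\theta$, and the part away from the diagonal (which is what the universal divisor restricts to) is $(4g-6)\theta-(4g-4)x$; a pure multiple of $\theta$ would in any case miss the essential negative $x$-coefficient.

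Two further gaps. First, your opening claim that finite quotient singularities are canonical is false in general (the cyclic quotient singularity $\frac{1}{3}(1,1)$ is log terminal but not canonical), so it does not yield the extension of pluricanonical forms; the statement that the singularities of $\cc_{g,n}$ impose no adjoint conditions is a genuine theorem, namely the result of \cite{FV2} quoted in the paper, proved by a Reid--Tai analysis combining automorphisms of stable pointed curves with the $\mathfrak{S}_{g-1}$-action, and it does not follow formally from Harris--Mumford or Logan, who treat $\mm_g$ and $\mm_{g,n}$ only. Second, your endgame is misstated: the criterion is not that some corrected slope exceed the Brill--Noether bound $6+12/(g+1)$, but that $s(\mm_g)$ lie \emph{below} $\frac{84g-185}{12g-25}$, and verifying this for every $g\geq 12$ requires Brill--Noether divisors when $g+1$ is composite, Gieseker--Petri divisors when $g+1$ is prime, and, in the pivotal case $g=12$, the special divisor of \cite{FV1}, because $6+\frac{12}{13}>\frac{823}{119}$, so the Brill--Noether bound fails precisely at the boundary genus your theorem is about. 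Within your framework the case $g=12$ would therefore fall through even if the main reduction were repaired.
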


We also have a birational classification theorem for the universal degree $n$ symmetric product $\cc_{g, n}:=\mm_{g, n}/\mathfrak{S}_n$  for all $1\leq n\leq g-2$, and  refer to Section 3 for details. Our results are complete in degree  $g-2$ and less precise as $n$ decreases. Similarly to Theorem \ref{gentyp},  the nature of $\cc_{g, g-2}$ changes when $g=12$:
\begin{theorem}\label{minus2}
The universal degree $g-2$ symmetric product $\cc_{g, g-2}$ is  uniruled for $g<12$ and a variety of general type for $g\geq 12$.
\end{theorem}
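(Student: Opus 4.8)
The plan is to handle the two ranges by entirely different mechanisms: for $g<12$ I will realise $\cc_{g,g-2}$ as a dominant image of a variety already known to be uniruled, and for $g\ge 12$ I will prove that the canonical class of the model $\cc_{g,g-2}=\mm_{g,g-2}/\mathfrak S_{g-2}$ is big, using the same effective-divisor input that drives Theorem~\ref{gentyp}.

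For the uniruled range the mechanism is that uniruledness descends along a dominant map whose general fibre contains no rational curve. Forgetting the last marked point and then symmetrising the first $g-2$ points gives a dominant map $\mm_{g,g-1}\to\cc_{g,g-2}$ whose general fibres are finite unions of copies of the genus-$g$ curve, hence carry no rational curves. It therefore suffices to know that $\mm_{g,g-1}$ is uniruled for $g\le 11$, which is exactly the input behind Theorem~\ref{class}: for $g\le 9$ the ordered form of Mukai's correspondence $\Sigma$ (record the points $(x_1,\dots,x_{g-1})$ rather than their sum) maps dominantly onto $\mm_{g,g-1}$, and since $\Sigma$ is a Grassmann bundle over the rational variety $V_g^{g-1}$ this already gives $\mm_{g,g-1}$ unirational; for $g=10,11$ one invokes \cite{FP}, \cite{FV2}. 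A covering family of rational curves on $\mm_{g,g-1}$ cannot be vertical for the map above, so its image sweeps out $\cc_{g,g-2}$. (For $g\le 9$ the degree-$(g-2)$ analogue $\Sigma'$ of $\Sigma$, a Grassmann bundle over $V_g^{g-2}$, maps dominantly to $\cc_{g,g-2}$ and yields unirationality directly.)

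For $g\ge 12$ I would show $K_{\cc_{g,g-2}}$ is big. I start from $K_{\mm_{g,g-2}}=13\lambda+\psi-2\delta$ and descend it along the quotient map $q\colon\mm_{g,g-2}\to\cc_{g,g-2}$, which is simply ramified along the diagonal boundary divisors $\delta_{0,\{i,j\}}$ where two marked points collide; this expresses $q^{*}K_{\cc_{g,g-2}}$ in the $\mathfrak S_{g-2}$-invariant Picard group as $13\lambda+\psi-2\delta-\sum_{i<j}\delta_{0,\{i,j\}}$. I then bring in the effective divisor $\overline{\mathcal Z}$, the degree-$(g-2)$ incarnation of the universal antiramification locus of the Gauss map: geometrically, the locus of pairs $(C,D_0)$ for which the residual pencil $|K_C-D_0|$ (a $g^{1}_{g}$, since $h^{0}(K_C-D_0)=2$ for general $D_0$) degenerates, acquiring a ramification point of index $\ge 3$. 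Its class $a\lambda+b\psi-c\,\delta_{\mathrm{irr}}-\cdots$ is the divisor-class computation that underlies this paper. Finally I would write $K_{\cc_{g,g-2}}=[\overline{\mathcal Z}]+\bigl((13-a)\lambda+(1-b)\psi-(\text{small boundary})\bigr)$ and check, via the Cornalba--Harris positivity criterion, that the second summand is ample; since an effective class plus an ample class is big, this gives general type.

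The crux is exactly this last numerical comparison. One must compute $[\overline{\mathcal Z}]$ precisely enough to see that its $\delta_{\mathrm{irr}}$-coefficient (and its remaining boundary coefficients) are small relative to its $\lambda$- and $\psi$-coefficients, so that the complementary class $13\lambda+\psi-2\delta-\sum_{i<j}\delta_{0,\{i,j\}}-[\overline{\mathcal Z}]$ lands in the ample cone; it is this inequality that produces the threshold $g=12$, in parallel with Theorem~\ref{gentyp}. A second, more technical obstacle is to ensure that bigness of $K$ on the singular model $\cc_{g,g-2}$ actually computes $\kappa$: one checks via the Reid--Tai criterion, together with the Harris--Mumford analysis of the boundary strata, that the finite-quotient singularities coming from colliding marked points impose no adjunction conditions, so that pluricanonical forms extend to a resolution. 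I would not expect the fibration $\cc_{g,g-2}\to\cM_g$ to let one deduce general type from Theorem~\ref{gentyp}: the relative-canonical contribution of the genus-$g$ fibres obstructs any such descent, which is why the direct canonical-class computation seems unavoidable.
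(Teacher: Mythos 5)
Your treatment of the uniruled range $g<12$ is fine and is essentially the paper's own argument: Mukai's correspondence for $g\leq 9$, the results of \cite{FP}, \cite{FV2} for $g=10,11$, and descent of uniruledness along dominant maps whose fibres contain no rational curves.

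The general-type half, however, has a genuine gap. The divisor you introduce, $\overline{\mathcal{Z}}$ (the locus where the residual pencil $|K_C-D_0|$ acquires a flex), is \emph{not} the divisor whose class is computed in the paper: Theorem \ref{gaussram} concerns $\overline{\mathfrak{Antram}}_g$ on $\mm_{g,g-1}$, while the divisor that actually drives Theorem \ref{minus2} is $\ff_{g,1}\subset\mm_{g,g-2}$, the locus where the $g-2$ marked points lie in a fibre of a pencil $A\in W^1_{g-1}(C)$. Its class (Theorem \ref{classminus2}, i.e.\ Theorem \ref{misc} with $m=1$) has $\lambda$-coefficient $-(g-12)$, and that sign change is one of the two sources of the threshold $g=12$. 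Since you never compute the class of $\overline{\mathcal{Z}}$, the ``crux numerical comparison'' you defer to is simply absent, and it cannot be borrowed from the paper.

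Worse, the decomposition you propose --- $K_{\cc_{g,g-2}}=[\overline{\mathcal{Z}}]+(\mathrm{ample})$, checked by Cornalba--Harris --- provably cannot hold, for any choice of interior divisor $\overline{\mathcal{Z}}$ and any coefficient. Let $E\subset\mm_{g,g-2}$ be an elliptic-tail pencil: attach a pencil of plane cubics to a fixed general curve of genus $g-1$ carrying all the marked points. Then $E\cdot\lambda=1$, $E\cdot\delta_{\mathrm{irr}}=12$, $E\cdot\delta_{1:\emptyset}=-1$, and $E\cdot\psi_i=E\cdot\delta_{0:2}=0$, so by (\ref{canmgn}) and Riemann--Hurwitz, $E\cdot\pi^*(K_{\cc_{g,g-2}})=13-24+3=-8$. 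Your $\overline{\mathcal{Z}}$ is the closure of a locus in $\cM_{g,g-2}$, hence contains no boundary divisor; since curves of type $E$ sweep out $\Delta_{1:\emptyset}$, the general such $E$ satisfies $E\cdot\overline{\mathcal{Z}}\geq 0$ (for the paper's antiramification-type divisors one even has equality, cf.\ (\ref{ellpencil})). Hence $\bigl(\pi^*K_{\cc_{g,g-2}}-\epsilon[\overline{\mathcal{Z}}]\bigr)\cdot E\leq -8$ for every $\epsilon\geq 0$: the remainder is never nef, let alone ample. (Independently, $[\overline{\mathcal{Z}}]$ has $\psi$-coefficient of order $g$ while $\pi^*K_{\cc_{g,g-2}}$ has $\psi$-coefficient $1$, so subtracting the full class makes the remainder negative on the fibral curves $C_{x_n}$.) This is precisely why the paper's criterion (\ref{requirement}) has three ingredients rather than two: the big-and-nef class $\sum_i\psi_i$, the auxiliary divisor $\ff_{g,1}$, and the pull-back $\varphi^*(D)$ of an effective divisor of small slope on $\mm_g$, together with non-negative multiples of boundary classes (roughly $8\,\delta_{1:\emptyset}$ among them), which are effective but not ample. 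The threshold $g\geq 12$ is then verified through condition (\ref{sufficient}), i.e.\ through the interplay of the coefficients of $\ff_{g,1}$ with the slope $s(\mm_g)$ --- Brill--Noether or Gieseker--Petri divisors in general, and the divisor of \cite{FV1} with $s(\mm_{12})=4415/642$ in the borderline case $g=12$. None of this machinery, and hence no mechanism actually producing the threshold, is present in your proposal.
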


The proof of Theorem \ref{gentyp} relies on the calculation of the universal \emph{antiramification divisor} class of the Gauss map. For a curve $C$ of genus $g$, let $\gamma:C_{g-1}\dashrightarrow \bigl(\PP^{g-1}\bigr)^{\vee}$ be the Gauss map given by $\gamma(D):=\langle D\rangle$ for each divisor $D\in C_{g-1}$ with $h^0(C, \OO_C(D))=1$. The branch divisor $\mbox{Br}(\gamma)\subset (\PP^{g-1})^{\vee}$  is the dual of the canonical curve $C\subset \PP^{g-1}$. The closure in $C_{g-1}$ of the ramification divisor $\mbox{Ram}(\gamma)$ is the locus of divisors $D\in C_{g-1}$ such that $\mbox{supp}(D)\cap \mbox{supp}(K_C(-D))\neq \emptyset$. \footnote{The description of the ramification divisor of the Gauss map given in \cite{ACGH} p. 247 is erroneous.} The \emph{antiramification} divisor $\mbox{Ant}(\gamma)$, defined by the following equality of divisors
$$\gamma^*(\mbox{Br}(\gamma))=\mbox{Ant}(\gamma)+2\cdot \mbox{Ram}(\gamma),$$ splits into the locus of non-reduced divisors $\Delta_C:=\{2p+D: p\in C,\ D\in C_{g-3}\}$ and the locus of divisors $D\in C_{g-1}$ such that $K_C(-D)$ has non-reduced support.  Globalizing this construction over $\cM_g$, we are led to consider the universal antiramification divisor
$$\mathfrak{Ant}_g:=\Bigl\{[C, x_1, \ldots, x_{g-1}]\in \cM_{g, g-1}: \exists p\in C \ \ \mathrm{ with } \ H^0\bigl(K_C(-x_1-\cdots-x_{g-1}-2p)\bigr)\neq 0\Bigr\}.$$
We have the following formula for the class of the closure $\overline{\mathfrak{Ant}}_g$ of $\mathfrak{Ant}_g$ in $\mm_{g, g-1}$:
\begin{theorem}\label{gaussram}
The class in $\mathrm{Pic}(\mm_{g, g-1})$ of the closure of the antiramification locus is\footnote{We are grateful to Scott Mullane for pointing out a confusing typo in the published version of the paper.}
$$[\overline{\mathfrak{Ant}}_g]= -4(g-7)\lambda-2\delta_{\mathrm{irr}}-\sum_{i=0}^g\sum_{s=0}^{i-1}\Bigl((2g-3)s^2-(4gi+2g-10i+1)s+2gi^2-7i^2+2gi+i+2\Bigr) \delta_{i: s}.$$
\end{theorem}

Note that the coefficient of $-\delta_{0:2}$ in this expression equals $12g-22$. In this formula, we also use the convention $\delta_{0:1}=\delta_{g:g-2}:=\sum_{i=1}^{g-1} \psi_i$, hence by substituting, the coefficient of $\psi_1+\cdots+\psi_{g-1}$ in the expression $[\overline{\mathfrak{Ant}}_g]$ equals $4(g-2)$. By construction, $\overline{\mathfrak{Ant}}_g$ is $\mathfrak S_{g-1}$-invariant. So it descends to an effective divisor $\widetilde{\mathfrak{Ant}}_g$ on $\cc_{g, g-1}$ which, as it turns out, spans an extremal ray of the effective cone $\mbox{Eff}(\cc_{g, g-1})$. The universal theta divisor is equipped with the involution $\tau:\cc_{g, g-1}\dashrightarrow \cc_{g, g-1}$,
$$\tau\bigl([C, x_1+\cdots+x_{g-1}]\bigr):=[C, y_1+\cdots+y_{g-1}],$$ where $\OO_C(y_1+\cdots+y_{g-1}+x_1+\cdots+x_{g-1})=K_C$.  Then $\widetilde{\mathfrak{Ant}}_g$ is the pull-back of the boundary divisor $\widetilde{\Delta}_{0:2}\subset \cc_{g, g-1}$ under this map. Since the extremality of $\widetilde{\Delta}_{0: 2}$ is easy to establish, the following result comes naturally:
\begin{theorem}\label{extremal}
The effective divisor $\widetilde{\mathfrak{Ant}}_g$ is covered by irreducible curves $\Gamma_g\subset \cc_{g, g-1}$ such that $\Gamma_g\cdot \widetilde{\mathfrak{Ant}}_g<0$. In particular $\widetilde{\mathfrak{Ant}}_g\in \mathrm{Eff}(\cc_{g, g-1})$ is a non-movable extremal effective divisor.
\end{theorem}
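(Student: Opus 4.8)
The plan is to invoke the standard extremality principle: if an effective divisor on a normal projective variety is swept out by a family of irreducible curves each meeting it negatively, then it spans an extremal ray of $\mathrm{Eff}$ and is non-movable (no other effective divisor lies in its numerical class). It therefore suffices to construct a covering family $\{\Gamma_g\}$ of $\mathfrak{G}_g$ with $\Gamma_g\cdot\mathfrak{G}_g<0$. Since $\mathfrak{G}_g=\tau^*(\widetilde{\Delta}_{0:2})=\tau(\widetilde{\Delta}_{0:2})$ and $\tau$ is a birational involution, I would first produce such a family for the boundary divisor $\widetilde{\Delta}_{0:2}$, then carry it and its intersection number across $\tau$.

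For the covering family of $\widetilde{\Delta}_{0:2}$, fix a general $[C]\in\cM_g$ and a general $D\in C_{g-3}$, and let $R_{C,D}:=\{2p+D:p\in C\}$ be the image of the doubling morphism $\phi\colon C\to C_{g-1}$, $\phi(p)=2p+D$. As $[C]$ and $D$ vary these curves sweep out $\widetilde{\Delta}_{0:2}$. Being contained in a single fibre $C_{g-1}$, along which $\widetilde{\Delta}_{0:2}$ restricts to the big diagonal $\Delta$, the number $R_{C,D}\cdot\widetilde{\Delta}_{0:2}$ is computed fibrewise. Writing $D=p_3+\cdots+p_{g-1}$ and lifting $\phi$ to $\tilde{\phi}\colon C\to C^{g-1}$, $\tilde{\phi}(p)=(p,p,p_3,\ldots,p_{g-1})$, I would use $\pi^{*}[\Delta]=2\sum_{i<j}[\Delta_{ij}]$ for the quotient $\pi\colon C^{g-1}\to C_{g-1}$ and evaluate each summand: the diagonal $\Delta_{12}$ contributes $\deg T_C=2-2g$ because $\tilde{\phi}(C)\subset\Delta_{12}$; the $2(g-3)$ divisors $\Delta_{1k},\Delta_{2k}$ ($3\le k\le g-1$) each meet $\tilde{\phi}(C)$ transversally in one point; and the remaining $\Delta_{ij}$ are disjoint from $\tilde{\phi}(C)$. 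This yields $R_{C,D}\cdot\widetilde{\Delta}_{0:2}=2\bigl((2-2g)+2(g-3)\bigr)=-8$, independent of $g$, so that $\widetilde{\Delta}_{0:2}$ is itself extremal.

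I would then set $\Gamma_g:=\tau(R_{C,D})\subset\mathfrak{G}_g$; as the $R_{C,D}$ cover $\widetilde{\Delta}_{0:2}$, the $\Gamma_g$ cover $\mathfrak{G}_g$. The involution $\tau$ fails to be defined only along the fibrewise locus $C_{g-1}^{1}$ of special divisors, which has dimension $\rho(g,1,g-1)+1=g-3$ and hence codimension two in $C_{g-1}$; thus $\tau$ is an isomorphism in codimension one. Since a general member of a general pencil of degree $g-1$ is reduced, the special non-reduced divisors form a subvariety of dimension at most $g-4$, and a dimension count on the map $C\times C_{g-3}\to C_{g-1}$, $(p,D')\mapsto 2p+D'$, shows that for general $[C]$ and $D$ the curve $R_{C,D}$ is disjoint from $C_{g-1}^{1}$. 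Granting this, $\tau$ restricts to an isomorphism between neighbourhoods of $R_{C,D}$ and of $\Gamma_g$ carrying $\widetilde{\Delta}_{0:2}$ to $\mathfrak{G}_g$; as both curves lie in these divisors, the intersection numbers equal the corresponding normal-bundle degrees, which are preserved, giving $\Gamma_g\cdot\mathfrak{G}_g=R_{C,D}\cdot\widetilde{\Delta}_{0:2}=-8<0$. The extremality principle of the first paragraph then completes the proof. The delicate point is precisely this control of the indeterminacy of $\tau$; as an independent check one could instead evaluate $\Gamma_g\cdot\mathfrak{G}_g$ directly by restricting the class formula of Theorem \ref{gaussram} to the test curve $\Gamma_g$.
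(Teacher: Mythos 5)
Your proof is correct and follows essentially the same route as the paper: the paper likewise deduces the theorem from the identity $\mathfrak{G}_g=\tau^*(\widetilde{\Delta}_{0:2})$ together with the (easy) extremality of $\widetilde{\Delta}_{0:2}$, which you establish via the fibrewise diagonal curves $\{2p+D : p\in C\}$ and then transport across $\tau$, taking care of its indeterminacy locus $C^1_{g-1}$. In fact your curves $\tau(R_{C,D})$ coincide with the paper's curves $\Gamma_g$: setting $A:=K_C\otimes\OO_C(-D)\in W^2_{g+1}(C)$, the residual divisors in $|K_C\otimes \OO_C(-D-2p)|$ are exactly the residuals of tangent lines to the degree $g+1$ plane model given by $A$, so the two covering families are literally the same.
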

The curves $\Gamma_g$ have a simple modular construction. One fixes a general linear series $A\in W^2_{g+1}(C)$, so that in particular $A$ is complete and has only ordinary ramification points. The general point of $\Gamma_g$ corresponds to an element $[C, D]\in \cc_{g, g-1}$, where  $D\in C_{g-1}$ is an effective divisor such that $H^0\bigl(C, A\otimes \OO_C(-2p-D)\bigr)\neq 0$, for some point $p\in C$, that is, $D$ is the residual divisor cut out by a tangent line to the degree $g+1$ plane model of $C$ given by $A$. We refer to Section 2 for details.
\vskip 3pt

The proofs of Theorems \ref{gentyp} and \ref{minus2} rely on two ingredients. First, we use our result \cite{FV2}, stating that for $g\geq 4$, the singularities of $\cc_{g, n}$ impose no \emph{adjoint conditions}, that is, pluricanonical forms defined on the smooth locus of
$\cc_{g, n}$ extend to a smooth model of the symmetric product. Precisely, if
$\epsilon:\widetilde{\mathcal{C}}_{g, n}\rightarrow \cc_{g, n}$ denotes any resolution of singularities,  then for any $\ell\geq 0$, there is a group isomorphism
$$\epsilon^*: H^0\bigl((\cc_{g, n})_{\mathrm{reg}}, K_{\cc_{g, n}^{\otimes \ell}}\bigr)\stackrel{\cong}\rightarrow H^0\bigl(\widetilde{\mathcal{C}}_{g, n}, K_{\widetilde{\mathcal{C}}_{g, n}^{\otimes \ell}}\bigr).$$ In particular, $\T_g$ is of general type when the canonical class $K_{\cc_{g, g-1}}\in \mbox{Pic}(\cc_{g, g-1})$ is big. This makes the problem of understanding the effective cone of $\cc_{g, g-1}$ of some importance. If $\pi:\mm_{g, g-1}\rightarrow \cc_{g, g-1}$ is the quotient map, the Hurwitz formula implies
\begin{equation}\label{pullbackcan}
\pi^*(K_{\cc_{g, g-1}})\equiv K_{\mm_{g, g-1}}-\delta_{0: 2}
\in \mathrm{Pic}(\mm_{g, g-1}).
\end{equation}
The sum $\sum_{i=1}^{g-1} \psi_i\in \mathrm{Pic}(\mm_{g, g-1})^{\mathfrak S_{g-1}}$ of cotangent tautological classes descends to a big and nef class on $\cc_{g, g-1}$ (cf. Proposition \ref{bigclass}). So in order to conclude that $\T_g$ is of general type, it suffices to exhibit an effective divisor $\mathfrak D$ on $\mathrm{Eff}(\cc_{g, g-1})$, such that
\begin{equation}\label{requirement}
\pi^*(K_{\cc_{g, g-1}})\in \mathbb Q_{>0}\Bigl\langle \sum_{i=1}^{g-1}\psi_i\Bigr\rangle+\phi^*\mathrm{Eff}(\mm_{g})+ \mathbb Q_{\geq 0}\Bigl\langle \lambda, \ \pi^*([\mathfrak D]), \ \delta_{i: s}: i\geq 0, s\geq 2\Bigr\rangle.
\end{equation}
In this formula, $\phi:\mm_{g, g-1}\rightarrow \mm_g$ denotes the morphism forgetting the marked points, and we refer to Section 1 for the standard notation for boundary divisor classes on $\mm_{g, n}$. Comparing condition (\ref{requirement}) against the formula for $K_{\cc_{g, g-1}}$ given by (\ref{canmgn}), if one writes $\pi^*([\mathfrak D])= a\lambda- b_{\mathrm{irr}}\delta_{\mathrm{irr}}+c\sum_{i=1}^{g-1}\psi_i-\sum_{i, s}b_{i: s}\delta_{i: c}\in \mathrm{Pic}(\mm_{g, g-1})$, the inequality
\begin{equation}\label{ineqnec}
3c<b_{0: 2}
\end{equation}
is a necessary condition for the existence of a divisor $\mathfrak D$ satisfying (\ref{requirement}). It is straightforward to unravel the geometric significance of the condition
(\ref{ineqnec}). If $[C]\in \cM_g$ is a general curve, there is a rational map $u:C_{g-1}\dashrightarrow \cc_{g, g-1}$ given by restriction. Denoting by $x, \theta \in N^1(C_{g-1})_{\mathbb Q}$ the standard generators of the N\'eron-Severi group of the symmetric product, the inequality
(\ref{ineqnec}) characterizes precisely those divisors $\mathfrak{D}\in \mbox{Pic}(\cc_{g, g-1})$ for which $u^*([\mathfrak{D}])$ lies in the fourth quarter of the $(\theta, x)$-plane (see \cite{K1} for details on the effective cone of $C_{g-1}$). The divisor $\mathfrak D\subset \cc_{g, g-1}$ playing this role is precisely $\overline{\mathfrak{Ant}}_g$.

\vskip 4pt

We explain briefly how Theorem \ref{gaussram} implies the statement about the Kodaira dimension of $\cc_{g, g-1}$. We choose an effective divisor class $[D]= a\lambda-\sum_{i=0}^{\lfloor \frac{g}{2}\rfloor} b_i\delta_i \in \mbox{Eff}(\mm_g)$ on the moduli space of curves, with $a, b_i\geq 0$, having slope
$s=s(D):=\frac{a}{\mathrm{min}_i b_i}$ as small as possible. Then note that the following linear combination
$$\pi^*(K_{\cc_{g, g-1}})-\frac{1}{6g-11}\Bigl(\frac{3}{2}[\overline{\mathfrak{Ant}}_g]-(12g-25)\phi^*([D])-\sum_{i=1}^{g-1}\psi_i-\bigl((84g-185)-(12g-25)s\bigr)\lambda\Bigr)
$$
is expressible as a positive combination of boundary divisors on $\mm_{g, g-1}$. Since, as already pointed out, the class $\sum_{i=1}^{g-1} \psi_i\in \mathrm{Pic}(\mm_{g, g-1})$ descends to a big class on $\cc_{g, g-1}$, one obtains the following:
\begin{corollary}\label{condition}
For all $g$ such that the slope of the moduli space of curves satisfies the inequality
$$s(\mm_g):=\mathrm{inf}_{D\in \mathrm{Eff}(\mm_g)} s(D)<\frac{84g-185}{12g-25}, $$
the universal theta divisor $\T_g$ is of general type.
\end{corollary}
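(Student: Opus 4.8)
The plan is to deduce general type from the bigness of the canonical class and to certify that bigness by an explicit effective decomposition of $\pi^*(K_{\cc_{g, g-1}})$ in $\mathrm{Pic}(\mm_{g, g-1})$. First I would invoke the adjoint result of \cite{FV2}: since the singularities of $\cc_{g, g-1}$ impose no adjoint conditions, the pluricanonical forms on the smooth locus extend to any resolution, so $\T_g$ (birational to $\cc_{g, g-1}$) is of general type as soon as $K_{\cc_{g, g-1}}$ is big. As $\pi:\mm_{g, g-1}\rightarrow \cc_{g, g-1}$ is finite, bigness of $K_{\cc_{g, g-1}}$ is equivalent to bigness of its pullback, and by \eqref{pullbackcan} together with the canonical formula \eqref{canmgn} for $K_{\mm_{g, g-1}}$ one obtains an explicit expression for $\pi^*(K_{\cc_{g, g-1}})=K_{\mm_{g, g-1}}-\delta_{0: 2}$ in the generators $\lambda$, $\delta_{\mathrm{irr}}$, $\sum_i \psi_i$, $\delta_{i: s}$.

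The heart of the argument is to place $\pi^*(K_{\cc_{g, g-1}})$ inside the cone of \eqref{requirement}: I would seek a decomposition of $\pi^*(K_{\cc_{g, g-1}})$ as $\alpha\sum_{i=1}^{g-1}\psi_i+\phi^*(D)$ times a non-negative scalar, plus $\beta\,[\overline{\mathfrak{Antram}}_g]$, plus a non-negative combination of $\lambda$ and the boundary classes $\delta_{i: s}$ with $s\geq 2$, where $\alpha>0$ and $D\in \mathrm{Eff}(\mm_g)$. The value of $\beta$ is forced by the $\delta_{0: 2}$-term: since the $-\delta_{0: 2}$ correction in \eqref{pullbackcan} gives $\pi^*(K_{\cc_{g, g-1}})$ a negative $\delta_{0: 2}$-coefficient, while $[\overline{\mathfrak{Antram}}_g]$ contributes $-(12g-22)\delta_{0: 2}$ by Theorem \ref{gaussram}, I would take $\beta=\tfrac{3}{12g-22}$, the least value absorbing that coefficient. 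This is admissible precisely because the antiramification divisor satisfies the necessary inequality \eqref{ineqnec}, namely $3\cdot 4(g-2)=12g-24<12g-22$. Equivalently, with this $\beta$ the residual coefficient of $\sum_i\psi_i$ equals $1-4(g-2)\beta=\tfrac{1}{6g-11}>0$, a strictly positive multiple of the class that by Proposition \ref{bigclass} descends to a big and nef class on $\cc_{g, g-1}$; this positive $\psi$-residue is exactly what produces bigness.

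The cost of introducing $\beta\,[\overline{\mathfrak{Antram}}_g]$ is its negative contribution $-4(g-7)\beta$ to the coefficient of $\lambda$. To pay for it I would borrow positivity from $\mm_g$: choosing $D\equiv a\lambda-\sum_i b_i\delta_i\in \mathrm{Eff}(\mm_g)$ of least possible slope $s=s(\mm_g)$ and pulling it back by $\phi$, the class $\phi^*(D)$ contributes positively in $\lambda$ and negatively in boundary, its effectivity being recorded by the single invariant $s$. Tracking the $\lambda$-coefficient after all cancellations, it comes out proportional to $(84g-185)-(12g-25)s$, which is positive exactly under the hypothesis $s(\mm_g)<\tfrac{84g-185}{12g-25}$; this is the origin of the stated threshold. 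Assembling the terms, $\pi^*(K_{\cc_{g, g-1}})$ is then expressed as a strictly positive multiple of $\sum_i\psi_i$ plus $\phi^*$ of an effective class plus a non-negative combination of $\lambda$, $[\overline{\mathfrak{Antram}}_g]$ and the $\delta_{i: s}$; being big plus pseudoeffective, it is big. Hence $K_{\cc_{g, g-1}}$ is big and $\T_g$ is of general type.

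The main obstacle I anticipate is not the clean $\lambda$-inequality, which reads off at once once $\delta_{0: 2}$ is cleared, but the verification that every remaining boundary coefficient stays non-negative after the decomposition. This requires confronting the full cubic-in-$(i,s)$ expression $2i^3-5i^2-3i+4g-4i^2s+14si-6gs-s+2s^2g-3s^2+2$ from Theorem \ref{gaussram} against the boundary coefficients supplied by \eqref{canmgn} and by $\phi^*(D)$, and checking the resulting family of inequalities over the whole range of boundary indices $(i,s)$ as well as on $\delta_{\mathrm{irr}}$. I would organize this as a polynomial positivity analysis, isolating the extreme cases (small $s$, and $i$ near $g/2$) where the margin is tightest; once these are dispatched the corollary follows with the slope bound above.
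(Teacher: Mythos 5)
Your proposal is correct and follows essentially the same route as the paper: invoke the extension result of \cite{FV2}, subtract $\beta[\overline{\mathfrak{Antram}}_g]$ with $\beta=\tfrac{3}{12g-22}$ forced by the $\delta_{0:2}$-coefficient, leaving the positive $\psi$-residue $\tfrac{1}{6g-11}\sum_i\psi_i$, and use $\phi^*(D)$ of minimal slope to clear $\delta_{\mathrm{irr}}$, so that the residual $\lambda$-coefficient is a positive multiple of $(84g-185)-(12g-25)s$. Your constants coincide exactly with the paper's displayed combination (whose printed signs are garbled but whose intent is yours), and like the paper you leave the routine non-negativity check of the remaining boundary coefficients as a verification rather than a new idea.
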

The bound appearing in Corollary \ref{condition} holds precisely when $g\geq 12$; for $g$ such that $g+1$ is composite, the inequality $s(\mm_g)\leq 6+12/(g+1)$ is well-known, and $D$ can be chosen to be a Brill-Noether divisor $\mm_{g, d}^r$ corresponding to curves with a $\mathfrak g^r_d$ when the Brill-Noether number $\rho(g, r, d)=-1$, cf. \cite{EH}. When $g+1$ is prime and $g\neq 12$, then in practice $g=2k-2\geq 16$, and $D$ can be chosen to be the Gieseker-Petri $\overline{\mathcal{GP}}_{g, k}^1$ consisting of curves $C$ possessing a pencil $A\in W^1_k(C)$ such that the Petri map
$\mu_0(C, A): H^0(C, A)\otimes H^0(C, K_C\otimes A^{\vee})\rightarrow H^0(C, K_C)$ is not an isomorphism. When $g=12$, one has to use the divisor constructed on $\mm_{12}$ in \cite{FV1}. Finally, when $g\leq 11$ it is known that $s(\mm_g)\geq 6+12/(g+1)$ and inequality (\ref{condition}) is not satisfied.
In fact, as already pointed out $\kappa(\T_g)=-\infty$ in this range.
\vskip 3pt

The proof of Theorem \ref{minus2} proceeds along similar lines, and relies on finding an explicit $\mathfrak S_{g-2}$-invariant extremal ray of the cone of effective divisors on $\mm_{g, g-2}$. A representative of this  ray is characterized by the geometric condition that the marked points appear in the same fibre of a pencil of degree $g-1$. One can construct such divisors on all moduli spaces $\mm_{g, n}$ with $1\leq n\leq g-2$, cf. Section 3.
\begin{theorem}\label{classminus2}
The closure inside $\mm_{g, g-2}$ of the locus
$$\F_{g, 1}:=\Bigl\{[C, x_1, \ldots, x_{g-2}]\in \cM_{g, g-2}: \exists A\in W^1_{g-1}(C) \  \mbox{ with  }\  H^0\bigl(C, A(-\sum_{i=1}^{g-2} x_i)\bigr)\neq 0\Bigr\}$$
is a non-movable, extremal effective divisor on $\mm_{g, g-2}$. Its class is given by the formula:
$$[\ff_{g, 1}]= -(g-12)\lambda+(g-3)\sum_{i=1}^{g-2}\psi_i-\delta_{\mathrm{irr}}-\frac{1}{2}\sum_{s=2}^{g-2}s(g-4+sg-2s)\ \delta_{0: s}-\cdots\in \mathrm{Pic}(\mm_{g, g-2}) .$$
\end{theorem}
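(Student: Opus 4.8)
The plan is to recast the incidence condition of $\F_{g,1}$ in residual form. For a general $[C,x_1,\dots,x_{g-2}]\in\cM_{g,g-2}$, set $D=x_1+\cdots+x_{g-2}$; Riemann--Roch gives $h^0(C,\OO_C(D))=1$ and hence $h^0\bigl(C,K_C\otimes\OO_C(-D)\bigr)=2$, so $|K_C-D|$ is a pencil of degree $g$. A pencil $A\in W^1_{g-1}(C)$ with $H^0(C,A(-D))\neq 0$ exists exactly when $A=\OO_C(D+p)$ with $h^0(D+p)\geq 2$ for some $p\in C$, which by residuation means that $p$ is a base point of $|K_C-D|$. Thus $\F_{g,1}$ is precisely the locus where the residual pencil $|K_C-D|$ fails to be base-point-free. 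As this pencil is base-point-free for general $(C,D)$, a Brill--Noether dimension count (the base locus being cut out in codimension two inside the universal curve over $C_{g-2}$) shows that $\F_{g,1}$ is a genuine, nonempty effective divisor.

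I would then globalize. Let $\pi:\mathcal{C}\rightarrow\mm_{g,g-2}$ be the universal curve with sections $\sigma_1,\dots,\sigma_{g-2}$, put $\mathcal{L}:=\omega_\pi(-\sum_i\sigma_i)$ and $\mathcal{E}:=\pi_*\mathcal{L}$, a rank-two sheaf over the open locus where $h^0(K_C-D)=2$. The base-point condition is governed by the evaluation map $\pi^*\mathcal{E}\rightarrow\mathcal{L}$: since the target is a line bundle, its non-surjectivity locus $Z\subset\mathcal{C}$ is the zero locus of the induced section of the rank-two bundle $\mathcal{L}\otimes\pi^*\mathcal{E}^\vee$, of expected codimension two, and $\F_{g,1}=\pi_*[Z]$. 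By Porteous, $[Z]=c_2(\mathcal{L}\otimes\pi^*\mathcal{E}^\vee)=\pi^*c_2(\mathcal{E})-c_1(\mathcal{L})\,\pi^*c_1(\mathcal{E})+c_1(\mathcal{L})^2$, and pushing forward via the projection formula (using $\pi_*\pi^*c_2(\mathcal{E})=0$ and $\pi_*c_1(\mathcal{L})=\deg\mathcal{L}|_{\mathrm{fibre}}=g$) yields
\[
[\F_{g,1}]=\pi_*\bigl(c_1(\mathcal{L})^2\bigr)-g\,c_1(\mathcal{E}).
\]
Grothendieck--Riemann--Roch applied to $\pi_!\mathcal{L}$ computes $c_1(\mathcal{E})$ in terms of $\lambda$, the $\psi_i$ and boundary classes, while $\pi_*(c_1(\mathcal{L})^2)$ is evaluated from $\pi_*(c_1(\omega_\pi)^2)=12\lambda-\delta$, $\pi_*(c_1(\omega_\pi)\cdot[\sigma_i])=\psi_i$, $\pi_*[\sigma_i]^2=-\psi_i$ and $\pi_*([\sigma_i]\cdot[\sigma_j])=\delta_{0:\{i,j\}}$. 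Combining these reproduces the coefficients $-(g-12)\lambda$, $(g-3)\sum_i\psi_i$ and $-\delta_{\mathrm{irr}}$ of the stated formula.

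The main obstacle is the precise determination of the boundary coefficients. Over a general point of $\Delta_{0:s}$ the stable curve acquires a rational tail carrying the $s$ colliding marked points, so both $\mathcal{E}=\pi_*\mathcal{L}$ and the evaluation map degenerate, and the naive GRR computation must be corrected by a local (limit linear series) or test-curve analysis to produce the coefficient $-\tfrac12 s(g-4+sg-2s)$ of $\delta_{0:s}$. I expect this to be the delicate step, since the sheaf $\mathcal{E}$ jumps in rank along the boundary and the count of base points of $|K_C-D|$ must be reorganized on the nodal limit. The deeper strata $\delta_{i:s}$ with $i\geq 1$, abbreviated by $\cdots$ in the statement, are irrelevant to the applications and may be left undetermined.

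Finally, for extremality and non-movability I would mirror the argument of Theorem~\ref{extremal}. One constructs a covering family of curves $\Gamma\subset\mm_{g,g-2}$ contained in $\F_{g,1}$: fix a general $[C]$ and a pencil $A\in W^1_{g-1}(C)$, let $p$ vary along $C$, and take $D=F-p\le F$ where $F$ is the fibre of $A$ through $p$; this sweeps out a curve in $C_{g-2}$, hence (after labelling) in $\F_{g,1}$. Since $W^1_{g-1}(C)$ has the expected dimension $g-4$ for $g\geq 4$, varying $(C,A)$ shows these curves cover $\F_{g,1}$. Computing $\Gamma\cdot\F_{g,1}$ from the class formula restricted to $\Gamma$ and showing it is negative then forces $\F_{g,1}$ to be rigid and to span an extremal ray of $\mathrm{Eff}(\mm_{g,g-2})$, exactly as for $\mathfrak{G}_g$; the extension of pluricanonical forms from \cite{FV2} guarantees that this is the geometrically meaningful conclusion.
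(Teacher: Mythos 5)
Your reformulation of $\F_{g,1}$ as the locus where the residual pencil $|K_C(-x_1-\cdots-x_{g-2})|$ acquires a base point is correct, and your Porteous/GRR setup on the universal curve does recover the coefficients $-(g-12)\lambda$, $(g-3)\sum_i\psi_i$ and $-\delta_{\mathrm{irr}}$; this is a genuinely different route from the paper, which quotes these three coefficients from \cite{F2} Theorem 4.9 and never writes a universal degeneracy class. However, the coefficients $\frac{1}{2}s(g-4+sg-2s)$ of $\delta_{0:s}$ are part of the statement you are asked to prove, and they are exactly what you defer: over $\Delta_{0:s}$ the sheaves $\pi_*\mathcal{L}$ and $R^1\pi_*\mathcal{L}$ jump and the evaluation map vanishes along entire fibres, so the naive Porteous class has excess contributions precisely along these divisors. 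The paper does not fight this difficulty head-on; it pins down $b_{0:2}$ by intersecting with the fibral test curve $C_{x_n}$ (reducing to an enumerative count from \cite{HM} Theorem A), and then obtains all $b_{0:s}$ from the recursion $sc+(s-2)b_{0:s}-sb_{0:s-1}=0$, proved by showing that the test curves $\Gamma_{0:s}\subset \Delta_{0:s}$ (a rational tail carrying the $s$ colliding points, attached at a moving point) are \emph{disjoint} from $\ff_{g,1}$, via limit linear series and the pointed Brill--Noether theorem of \cite{EH}. Without an argument of this kind your class formula remains incomplete.

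The more serious flaw is the extremality step. Your covering curves are the same ones the paper uses: for $m=1$, the locus $\{F-p: p\in C,\ F\in |A| \mbox{ through } p\}$ is exactly the secant curve $V^1_{g-2}(A)$, whose image is $\Gamma_{g-2}(A)$. But the plan to ``show the intersection is negative'' and then apply the rigidity lemma of Theorem \ref{extremal} cannot be carried out, because the intersection number is \emph{zero}, not negative: Proposition \ref{extremality} gives $\Gamma_{g-2m}(A)\cdot \tf_{g,m}=(m-1){g-m-2\choose m}{g\choose m}$, which vanishes for $m=1$, as one can verify directly from Corollary \ref{symmclass} and the Porteous class of $V^1_{g-2}(A)$ (e.g.\ already for $g=4,5$); the vanishing persists after pulling back to $\mm_{g,g-2}$. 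Covering curves with intersection number zero do not by themselves force rigidity or extremality --- the fibres of a fibration are covered by such curves and the fibre class moves --- so the conclusion ``non-movable, extremal'' requires a separate argument beyond the negative-intersection lemma, which is how the paper (somewhat tersely) concludes from the vanishing. As written, your final step fails the moment the computation is actually performed.
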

Note that again, inequality (\ref{ineqnec}) is satisfied, hence $\ff_{g, 1}$ can be used to prove that $K_{\cc_{g, g-2}}$ is big. Moreover, $\ff_{g, 1}$ descends to an extremal divisor $\widetilde{\mathcal{F}}_{g, 1}\in  \mathrm{Eff}(\cc_{g, g-2})$. In fact, we shall show that $\widetilde{\mathcal{F}}_{g, 1}$ is swept by curves intersecting its class negatively.
\vskip 4pt

\section{Cones of divisors on universal symmetric products}

The aim of this section is to establish certain facts about boundary divisors on $\mm_{g, n}$ and $\cc_{g, n}$, see \cite{AC} for a standard reference. As in \cite{FV2}, we denote the coarse moduli space of a Deligne-Mumford stack $\bf{M}$ by $\cM$. We denote by $C_n$ the $n$-th symmetric product of a smooth curve $C$.

For an integer $0\leq i\leq \lfloor \frac{g}{2}\rfloor$ and a subset
$T\subset \{1, \ldots, n\}$, let $\Delta_{i: T}$ be the closure in $\mm_{g, n}$ of the locus of $n$-pointed curves $[C_1\cup C_2, x_1, \ldots, x_n]$, where $C_1$ and $C_2$ are smooth curves of genera $i$ and $g-i$ respectively meeting transversally in one point, and the marked points lying on $C_1$ are precisely those indexed
by $T$. We define the class $\delta_{i: T}:=[\Delta_{i: T}]_{\mathbb Q}\in \mbox{Pic}(\mm_{g, n})$. For $0\leq i\leq \lfloor \frac{g}{2}\rfloor$ and $0\leq s\leq g$, we set
$$\Delta_{i: s}:=\sum_{|T|=s}\delta_{i: T}, \ \ \ \  \delta_{i: s}:=[\Delta_{i: s}]_{\mathbb Q}\in \mbox{Pic}(\mm_{g, n}).$$
 By convention, $\delta_{0: s}:=\emptyset$, for $s<2$, and $\delta_{i: s}:=\delta_{g-i: n-s}$.
If $\phi:\mm_{g, n}\rightarrow \mm_g$ is the morphism forgetting the marked points, we set $\lambda:=\phi^*(\lambda)$ and $\delta_{\mathrm{irr}}:=\phi^*(\delta_{\mathrm{irr}})$, where $\delta_{\mathrm{irr}}:=[\Delta_{\mathrm{irr}}]\in \mbox{Pic}(\mm_g)$ denotes the class of the locus of irreducible nodal curves. Furthermore, $\psi_1, \ldots, \psi_n\in \mbox{Pic}(\mm_{g, n})$ are the cotangent classes corresponding to the marked points. The canonical class of $\mm_{g, n}$ is computed via Kodaira-Spencer theory:
\begin{equation}\label{canmgn}
K_{\mm_{g, n}}\equiv 13\lambda-2\delta_{\mathrm{irr}}+\sum_{i=1}^n \psi_i-2\sum_{T\subset \{1, \ldots, n\}\atop i\geq 0} \delta_{i: T}-\delta_{1: \emptyset}\in \mathrm{Pic}(\mm_{g, n}).
\end{equation}
 Let $\cc_{g, n}:=\mm_{g, n}/\mathfrak S_n$ be the universal symmetric product and let $\pi:\mm_{g, n}\rightarrow \cc_{g, n}$ and $\varphi: \cc_{g, n}\rightarrow \mm_g$ be respectively the projection and the forgetful map, so that $\phi=\varphi\circ \pi$. We denote by $\widetilde{\lambda}, \widetilde{\delta}_{\mathrm{irr}}, \widetilde{\delta}_{i: c}:=[\widetilde{\Delta}_{i: c}]\in \mbox{Pic}(\cc_{g, n})$ the divisor classes on the symmetric product pulling back  to the same symbols on $\mm_{g, n}$.  Clearly, $\pi^*(\widetilde{\lambda})=\lambda, \ \pi^*(\widetilde{\delta}_{\mathrm{irr}})=\delta_{\mathrm{irr}}$, $\pi^*(\widetilde{\delta}_{i: c})=\delta_{i: c}$; in the case $i=0, c=2$, this reflects the branching of the map $\pi$ along the divisor $\widetilde{\Delta}_{0: 2}\subset \cc_{g, n}$.  Following \cite{FV2}, let $\mathbb L$ denote the line bundle on $\cc_{g, n}$ having fibre $$\mathbb L[C, x_1+ \cdots+ x_n]:=T_{x_1}^{\vee}(C)\otimes \cdots \otimes T_{x_n}^{\vee}(C)$$ over a point $[C, x_1+\cdots+x_n]:=\pi([C, x_1, \ldots, x_n])\in \cc_{g, n}$. We set $\widetilde{\psi}:=c_1(\mathbb L)$, and note:
\begin{equation}\label{tildepsi}
\pi^*(\widetilde{\psi})=\sum_{i=1}^n \Bigl(\psi_i-\sum_{i\in T\subset \{1, \ldots, n\}} \delta_{0: T}\Bigr)=\sum_{i=1}^n \psi_i-\sum_{s=2}^n s\ \delta_{0:s}\in \mathrm{Pic}(\mm_{g, n}).
\end{equation}
\begin{proposition} For $g\geq 3$ and $n\geq 0$, the morphism $\pi^*:\mathrm{Pic}(\cc_{g, n})_{\mathbb Q}\rightarrow \mathrm{Pic}(\mm_{g, n})_{\mathbb Q}$ is injective. Furthermore, there is an isomorphism of groups $\mathrm{Pic}(\cc_{g, n})_{\mathbb Q}\stackrel{\cong}\rightarrow N^1(\cc_{g, n})_{\mathbb Q}$.
\end{proposition}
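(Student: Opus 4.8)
The plan is to establish the two assertions in turn, deducing the statement about numerical equivalence from the injectivity of $\pi^*$ together with the known identification $\mathrm{Pic}(\mm_{g,n})_{\mathbb Q}\cong N^1(\mm_{g,n})_{\mathbb Q}$. Throughout I would use that $\mm_{g,n}$ and $\cc_{g,n}$, being coarse moduli spaces of smooth Deligne--Mumford stacks, carry at worst finite quotient singularities and are therefore $\mathbb Q$-factorial, so that $\mathrm{Pic}(-)_{\mathbb Q}$ coincides with the rational group of Weil divisor classes.

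First I would prove injectivity of $\pi^*$ by exhibiting an explicit left inverse. The map $\pi:\mm_{g,n}\to\cc_{g,n}$ is finite and surjective of degree $n!$, and by $\mathbb Q$-factoriality the proper pushforward $\pi_*$ operates on $\mathrm{Pic}(-)_{\mathbb Q}$. The projection formula for a finite morphism gives $\pi_*\pi^*D=n!\cdot D$ for every $D\in\mathrm{Pic}(\cc_{g,n})_{\mathbb Q}$, so $\tfrac{1}{n!}\pi_*$ is a left inverse of $\pi^*$. This yields injectivity, and in fact identifies $\mathrm{Pic}(\cc_{g,n})_{\mathbb Q}$ with the invariant subspace $\mathrm{Pic}(\mm_{g,n})_{\mathbb Q}^{\mathfrak S_n}$.

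For the isomorphism $\mathrm{Pic}(\cc_{g,n})_{\mathbb Q}\stackrel{\cong}{\rightarrow}N^1(\cc_{g,n})_{\mathbb Q}$, I would chase the commutative square
$$\begin{CD}
\mathrm{Pic}(\cc_{g,n})_{\mathbb Q} @>{\pi^*}>> \mathrm{Pic}(\mm_{g,n})_{\mathbb Q}\\
@VVV @VV{\cong}V\\
N^1(\cc_{g,n})_{\mathbb Q} @>{\pi^*}>> N^1(\mm_{g,n})_{\mathbb Q}
\end{CD}$$
whose vertical arrows are the natural surjections onto the Néron--Severi groups. The right-hand vertical arrow is an isomorphism, since for $g\geq 3$ the group $\mathrm{Pic}(\mm_{g,n})_{\mathbb Q}$ is freely generated by the tautological and boundary classes, which are numerically independent (cf. \cite{AC}, \cite{FV2}), so numerical and rational-linear equivalence agree there. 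The left-hand arrow is surjective by definition, so only injectivity remains: if $D\in\mathrm{Pic}(\cc_{g,n})_{\mathbb Q}$ is numerically trivial, then $\pi^*D$ is numerically trivial on $\mm_{g,n}$ (numerical triviality is preserved under finite pullback, as $\pi^*D\cdot\Gamma=D\cdot\pi_*\Gamma$ for every curve $\Gamma$), whence $\pi^*D=0$ in $\mathrm{Pic}(\mm_{g,n})_{\mathbb Q}$ by the right-hand isomorphism, and finally $D=0$ by the injectivity of $\pi^*$ just established.

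The step I expect to require the most care is the compatibility of the pushforward and the projection formula with the passage from Weil to $\mathbb Q$-Cartier classes: the identity $\pi_*\pi^*=n!$ must hold at the level of $\mathrm{Pic}(-)_{\mathbb Q}$, which is precisely where $\mathbb Q$-factoriality of $\mm_{g,n}$ and $\cc_{g,n}$ enters, and one must confirm that the equality $\mathrm{Pic}_{\mathbb Q}=N^1_{\mathbb Q}$ for the pointed spaces is available in the stated generality $g\geq 3$, $n\geq 0$. The remaining content is formal diagram chasing.
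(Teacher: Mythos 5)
Your proof is correct and takes essentially the same route as the paper: your left inverse $\frac{1}{n!}\pi_*$ is just the $\mathbb Q$-divisor incarnation of the norm morphism $\mathrm{Nm}_{\pi}$ that the paper invokes (which satisfies $\mathrm{Nm}_{\pi}(\pi^*L)=L^{\otimes \deg(\pi)}$), and your diagram chase for $\mathrm{Pic}(\cc_{g, n})_{\mathbb Q}\cong N^1(\cc_{g, n})_{\mathbb Q}$ is exactly the paper's appeal to the commutativity of the obvious diagrams together with $\mathrm{Pic}(\mm_{g, n})_{\mathbb Q}\cong N^1(\mm_{g, n})_{\mathbb Q}$. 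The only cosmetic difference is that the paper works with the norm on line bundles directly, so it never needs to pass through Weil divisors and the $\mathbb Q$-factoriality of the quotient; both versions express the same transfer identity $\pi_*\pi^*=\deg(\pi)\cdot \mathrm{id}$.
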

\begin{proof} The first assertion is an immediate consequence of the existence of the norm morphism  $\mathrm{Nm}_{\pi}:\mathrm{Pic}(\mm_{g, n})\rightarrow \mathrm{Pic}(\cc_{g, n})$, such that $\mathrm{Nm}_{\pi}(\pi^*(L))=L^{\otimes{\mathrm{deg}(\pi)}}$, for every $L\in \mathrm{Pic}(\cc_{g, n})$. The second part comes from the isomorphism $\mathrm{Pic}(\mm_{g, n})_{\mathbb Q}\stackrel{\cong}\rightarrow  N^1(\mm_{g, n})_{\mathbb Q}$, coupled with the commutativity of the diagrams relating the Picard and N\'eron-Severi groups of $\mm_{g, n}$ and $\cc_{g, n}$ respectively.
\end{proof}
One may thus identify $\mathrm{Pic}(\cc_{g, n})_{\mathbb Q}\cong \mathrm{Pic}(\mm_{g, n})_{\mathbb Q}^{\mathfrak S_n}$.  The Riemann-Hurwitz formula applied to the branched covering $\pi:\mm_{g, n}\rightarrow \cc_{g, n}$  yields  $$\pi^*(K_{\cc_g, n})=K_{\mm_{g, n}}-\delta_{0: 2}\equiv 13\lambda+\sum_{i=1}^n \psi_i-2\delta_{\mathrm{irr}}-3\delta_{0: 2}-2\sum_{s=3}^n \delta_{0: s}-\cdots.$$
As expected, the sum of cotangent classes descends to a big line bundle on $\cc_{g, n}$.
\begin{proposition}\label{bigclass}
The divisor class $N_{g, n}:=\widetilde{\psi}+\sum_{s=2}^{n} s\widetilde{\delta}_{0:s}\in \mathrm{Eff}(\cc_{g, n})$ is big and nef.
\end{proposition}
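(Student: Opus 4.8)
The plan is to pull the statement back to $\mm_{g,n}$ along the finite, surjective quotient map $\pi$, where both nefness and bigness can be detected on the cotangent classes $\psi_i$. First I would combine the formula (\ref{tildepsi}) for $\pi^*(\widetilde\psi)$ with the identity $\pi^*(\widetilde\delta_{0:s})=\delta_{0:s}$ recorded just above the statement to get
$$\pi^*(N_{g,n})=\pi^*(\widetilde\psi)+\sum_{s=2}^n s\,\pi^*(\widetilde\delta_{0:s})=\Bigl(\sum_{i=1}^n\psi_i-\sum_{s=2}^n s\,\delta_{0:s}\Bigr)+\sum_{s=2}^n s\,\delta_{0:s}=\sum_{i=1}^n\psi_i\in\mathrm{Pic}(\mm_{g,n}).$$
Since $\pi$ is a finite surjective morphism of projective varieties, a $\mathbb Q$-divisor class on $\cc_{g,n}$ is nef (respectively big) if and only if its pullback is; so it suffices to show that $\sum_{i=1}^n\psi_i$ is big and nef on $\mm_{g,n}$.

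Nefness is immediate: each $\psi_i$ is nef on $\mm_{g,n}$, hence so is the sum, and nefness descends to $N_{g,n}$ by the projection formula applied to irreducible curves on $\cc_{g,n}$.

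For bigness I would use that, since $\sum_i\psi_i$ is nef, it is big exactly when its top self-intersection is strictly positive, where $\dim\mm_{g,n}=3g-3+n$. Expanding $\bigl(\sum_{i=1}^n\psi_i\bigr)^{3g-3+n}$ multinomially yields a sum, with strictly positive coefficients, of monomials $\psi_1^{a_1}\cdots\psi_n^{a_n}$ with $\sum_i a_i=3g-3+n$. Each such monomial is a product of nef classes, hence has non-negative degree, so there is no cancellation and it is enough to produce a single monomial of positive degree. I would single out $a_1=3g-2$ and $a_2=\cdots=a_n=1$: repeated application of the dilaton equation, which removes a factor $\psi$ of exponent $1$ at the cost of a strictly positive scalar $2g-2+m$, gives
$$\int_{\mm_{g,n}}\psi_1^{3g-2}\psi_2\cdots\psi_n=\Bigl(\prod_{m=1}^{n-1}(2g-2+m)\Bigr)\int_{\mm_{g,1}}\psi_1^{3g-2}=\Bigl(\prod_{m=1}^{n-1}(2g-2+m)\Bigr)\int_{\mm_g}\kappa_{3g-3},$$
the last equality by $\kappa_{3g-3}=\pi_*(\psi^{3g-2})$. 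The top one-pointed intersection number $\int_{\mm_g}\kappa_{3g-3}=\langle\tau_{3g-2}\rangle_g$ is a strictly positive rational, so the self-intersection is positive and $\sum_i\psi_i$, hence $N_{g,n}$, is big; in particular $N_{g,n}$ lies in $\mathrm{Eff}(\cc_{g,n})$.

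The two non-formal inputs are the nefness of each $\psi_i$ on $\mm_{g,n}$ and the positivity of $\langle\tau_{3g-2}\rangle_g$, both of which are standard, so I do not anticipate a real obstacle; the only point requiring care is organizing the self-intersection so that positivity is visible, which the nefness of the $\psi_i$ settles by ruling out any cancellation among the monomials. As a geometric sanity check, restricting $N_{g,n}$ to a general fibre $\mathrm{Sym}^n C$ of $\varphi:\cc_{g,n}\to\mm_g$ yields $(2g-2)x$ in the notation for $N^1(C_n)$, which is big and nef there; this confirms fibrewise positivity, though the absolute bigness is what the computation above actually supplies.
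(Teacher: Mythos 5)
Your argument is correct, and its bigness half takes a genuinely different route from the paper's. The two proofs coincide up through nefness: both identify $\pi^*(N_{g,n})=\sum_{i=1}^n\psi_i$ via (\ref{tildepsi}), quote the nefness of the classes $\psi_i$ on $\mm_{g,n}$, and descend nefness through the finite quotient $\pi$ by the projection formula. For bigness, the paper stays inside its effective-cone formalism: it writes $N_{g,n}$ as a positive $\mathbb Q$-combination of the ample class $\widetilde{\kappa}_1$, the effective Weierstrass class $[\ww_{g,n}]$ (whose negative $\lambda$-coefficient is the key point, since $\sum_i\psi_i$ carries no $\lambda$ while $\kappa_1$ carries $12\lambda$), and boundary classes, so that bigness follows from ample plus effective. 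You instead invoke the numerical criterion for nef classes --- big if and only if the top self-intersection is positive --- verified by expanding $\bigl(\sum_i\psi_i\bigr)^{3g-3+n}$ into monomials that are non-negative by nefness and evaluating the single monomial $\psi_1^{3g-2}\psi_2\cdots\psi_n$ via the dilaton equation and $\int_{\mm_{g,1}}\psi_1^{3g-2}=\int_{\mm_g}\kappa_{3g-3}=\frac{1}{24^g\,g!}>0$. One phrasing caveat: you only need the implication ``$\pi^*$ big implies big'' for a class already known to be nef, where it is immediate from $(\pi^*N_{g,n})^{3g-3+n}=\mathrm{deg}(\pi)\cdot N_{g,n}^{3g-3+n}$; the blanket claim for arbitrary classes under finite surjective morphisms is true but requires more care, so it is better to route the descent through the numerical criterion as your computation in fact does. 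Your approach avoids the Weierstrass class formula, the ampleness of $\widetilde{\kappa}_1$, and the paper's unverified ``one checks'' step, at the price of importing Witten--Kontsevich-type intersection numbers; the paper's approach is elementary given its divisor-class formulas and produces an explicit ample-plus-effective decomposition.

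One small correction to your closing sanity check: the restriction of $N_{g,n}$ to a general fibre is not $(2g-2)x$; that is the restriction of $\widetilde{\psi}$ alone. By the formulas quoted in Section 3, $u^*(\widetilde{\psi})\equiv\theta+\delta_C+(g-n-1)x\equiv(2g-2)x$, while $u^*(\widetilde{\delta}_{0:2})=\delta_C$ and $u^*(\widetilde{\delta}_{0:s})=0$ for $s\geq 3$, so $u^*(N_{g,n})\equiv(2g-2)x+2\delta_C\equiv-2\theta+(4g+2n-4)x$. Since $x$ is big and nef on $C_n$ and $2\delta_C$ is effective, this corrected class is still big, so the check survives; but as written it conflates $N_{g,n}$ with $\widetilde{\psi}$. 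This does not affect your proof, which is complete without it.
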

\begin{proof} The class $N_{g, n}$ is characterized by the property that  $\pi^*(N_{g, n})=\sum_{i=1}^n \psi_i$. This is a nef class on $\mm_{g, n}$,  in particular, $N_{g, n}$ is nef on $\cc_{g, n}$. To establish that $N_{g, n}$ is big, we express it as a combination of effective classes and the class $\widetilde{\kappa}_1\in \mathrm{Pic}(\cc_{g, n})$, where
$$\pi^*(\widetilde{\kappa}_1)=\kappa_1=12\lambda+\sum_{i=1}^n \psi_i-\delta_{\mathrm{irr}}-\sum_{i=0}^{[g/2]}\sum_{s\geq 0} \delta_{i:s}\in \mathrm{Pic}(\mm_{g, n}).$$ Since $\pi^*(\widetilde{\kappa}_1)$ is ample on $\mm_{g, n}$, it follows that $\widetilde{\kappa}_1$ is ample as well. To finish the proof, we exhibit a suitable effective class on $\mm_{g, n}$ having negative $\lambda$-coefficient. For that purpose, we choose
$\W_{g, n}\subset \mathcal{C}_{g, n}$ to be the locus of effective divisors having a Weierstrass point in their support. For $i=1, \ldots, n$, we denote by $\sigma_i:\mm_{g, n}\rightarrow \mm_{g, 1}$ the morphism forgetting all but the $i$-th point, and let $$[\ww]= -\lambda+{g+1\choose 2}\psi-\sum_{i=1}^{g-1} {g-i+1\choose 2}\delta_{i: 1}\in \mathrm{Eff}(\mm_{g, 1}),$$ be the class of the divisor of Weierstrass points on the universal curve. Then one finds
$$[\pi^*(\overline{\W}_{g, n})]= \sum_{i=1}^n \sigma_i^*([\ww])=-n\lambda+{g+1\choose 2}\sum_{i=1}^n \psi_i-{g+1\choose 2}\sum_{s=2}^n s\delta_{0: s}-\cdots\in \mathrm{Pic}(\mm_{g, n}),$$
and $[\overline{\W}_{g, n}]= -g\widetilde{\lambda}+{g+1 \choose 2}\widetilde{\psi}-\sum_{i=1}^{\lfloor \frac{g}{2}\rfloor}\sum_{s\geq 0} b_{i:s}\widetilde{\delta}_{i:s}$, where $b_{i:s}>0$.
One checks that $N_{g, n}$ can be written as a $\mathbb Q$-combination with positive coefficients of the ample class $\widetilde{\kappa}_1$, the effective class $[\overline{\W}_{g, n}]$ and other boundary divisor classes. In particular,
$N_{g, n}$ is big.
\end{proof}

\section{The universal antiramification locus of the Gauss map}

We begin the calculation of the divisor $\overline{\mathfrak{Ant}}_g$, and for a start we consider its restriction $\mathfrak{Ant}_g$ to $\cM_{g, g-1}$. Recall that $\mathfrak{Ant}_g$ is defined as the closure of the locus of pointed curves $[C, x_1, \ldots, x_{g-1}]\in \cM_{g, g-1}$, such that there exists a holomorphic form on $C$
vanishing at $x_1, \ldots, x_{g-1}$ and having an unspecified double zero.

Let $u:{\bf{M}}_{g, g-1}^{(1)}\rightarrow {\bf{M}}_{g, g-1}$ be the universal curve over the stack of $(g-1)$-pointed smooth curves and denote by $\bigl([C, x_1, \ldots, x_{g-1}], p\bigr)\in \cM_{g, g-1}^{(1)}$ a general point, where $[C, x_1, \ldots, x_{g-1}]\in \cM_{g, g-1}$ and $p\in C$ is an arbitrary point.
For $i=1, \ldots, g-1$, let $\Delta_{ip}\subset \cM_{g, g-1}^{(1)}$ be the diagonal divisor given by the equation $p=x_i$. For $i=1, \ldots, g-1$ we consider as before the projections $\sigma_i: \bm_{g, g-1}^{(1)}\rightarrow \bm_{g, 1}$ and $\sigma_p:\bm_{g, g-1}^{(1)}\rightarrow \bm_{g, 1}$, induced by forgetting all marked points except $x_i$ and $p$ respectively. Then set $$K_i:=\sigma_i^*(\omega_{\phi})\in \mathrm{Pic}(\bm_{g, g-1}^{(1)})\ \mbox{ and } K_p:=\sigma_p^*(\omega_{\phi})\in \mbox{Pic}(\bm_{g, g-1}^{(1)}).$$
Furthermore, we consider the following cartesian diagram of stacks
\[
\begin{CD}
{\mathcal X}@>{q}>> {\bf{M}}_{g, g-1}^{(1)}\\
@VV{f}V@VV{}V\\
{\bf{M}}_{g, 1}^{}@>{\phi}>>{\bf{M}}_{g}\\
\end{CD}
\]
in which all the morphisms are smooth and $\phi$ (hence also $q$) is
proper. For $1\leq i\leq g-1$ there are tautological sections $r_i: {\bf{M}}_{g, g-1}^{(1)}\rightarrow \mathcal{X}$
as well as $r_p: {\bf{M}}_{g, g-1}^{(1)}\rightarrow \mathcal{X}$, and we set $E_i:=\mbox{Im}(r_i), E_p:=\mbox{Im}(r_p)$. Thus $\{E_i\}_{i=1}^{g-1}$ and
$E_p$ are relative divisors over $q$.

For a point $\bigl([C, x_1, \ldots, x_{g-1}], p\bigr)\in \cM_{g, g-1}^{(1)}$, we denote  $D:=\sum_{i=1}^{g-1}x_i+2p\in C_{g+1}$, and have the following exact sequence:
$$0\rightarrow \frac{H^0\bigl(\OO_C(D)\bigr)}{H^0(\OO_C)}\rightarrow H^0\bigl(\OO_D(D)\bigr)
\stackrel{\alpha_D}\rightarrow H^1(\OO_C)\rightarrow H^1\bigl(\OO_C(D)\bigr)\rightarrow 0.$$
In particular, the morphisms $\alpha_D$ globalize to a morphism of vector bundles over ${\bf{M}}_{g, g-1}^{(1)}$
$$\alpha: \cA:=q_*\Bigl(\OO_{\mathcal{X}}\bigl(\sum_{i=1}^{g-1} E_i+2E_p\bigr)/\OO_{\mathcal{X}}\Bigr)\rightarrow R^1q_*\OO_{\mathcal{X}}.$$
The subvariety $\cZ:=\bigl\{\bigl([C, x_1, \ldots, x_{g-1}], p\bigr)\in \cM_{g, g-1}^{(1)}: H^0\bigl(K_C(-2p-\sum_{i=1}^{g-1} x_i)\bigr)\neq 0\bigr\}$
is  the non-surjectivity locus of  $\alpha$ and $\mathfrak{Ant}_g:=u_*(\cZ)\subset \cM_{g, g-1}$.
The class of $\cZ$ is equal to
$$[\cZ]=c_2\Bigl(\cA^{\vee}-\bigl(R^1q_*\OO_{\mathcal{X}}\bigr)^{\vee}\Bigr)=c_2\Bigl(-q_!\OO_{\mathcal{X}}(\sum_{i=1}^{g-1} E_i+2E_p)\Bigr)\in A^2({\bf{M}}_{g, g-1}^{(1)}),$$
where the last term can be computed by Grothendieck-Riemann-Roch:
$$\mathrm{ch}\Bigl(q_!\OO_{\mathcal{X}}\bigl(\sum_{i=1}^{g-1} E_i+2E_p\bigr)\Bigr)=q_*\Bigl[\Bigl(\sum_{k\geq 0}\frac{(\sum_{i=1}^{g-1} E_i+2E_p)^k}{k!}\Bigr)\cdot
\Bigl(1-\frac{c_1(\omega_q)}{2}+\frac{c_1^2(\omega_q)}{12}+\cdots\Bigr)\Bigr],$$
and we are interested in evaluating the terms of degree $1$ and $2$ in this expression. The result of applying GRR to the morphism $q$ can be summarized as follows:
\begin{lemma}\label{grr}
One has the following relations in $A^*(\bm_{g, g-1}^{(1)})$:\newline
\noindent (i) $$\mathrm{ch}_1\Bigl(q_*\bigl(\OO_{\cX}(\sum_{i=1}^{g-1} E_i+2E_p)\bigr)\Bigr)=\lambda-\sum_{i=1}^{g-1} K_i-3K_p+2\sum_{i=1}^{g-1} \Delta_{ip}.$$

\noindent (ii) $$\mathrm{ch}_2\Bigl(q_*\bigl(\OO_{\cX}(\sum_{i=1}^{g-1} E_i+2E_p)\bigr)\Bigr)=\frac{5}{2}K_p^2+\frac{1}{2}\sum_{i=1}^{g-1}K_i^2-2\sum_{i=1}^{g-1} (K_i+K_p)\cdot \Delta_{ip}.$$
\end{lemma}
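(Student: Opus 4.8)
The plan is to read off the two Chern characters directly from the Grothendieck--Riemann--Roch expression displayed just before the statement, and then to convert each $q_*$ of a monomial in $E_1,\dots,E_{g-1},E_p$ and $c_1(\omega_q)$ into tautological classes on $\bm^{(1)}_{g,g-1}$ by the standard intersection calculus for a family of curves. Write $D:=\sum_{i=1}^{g-1}E_i+2E_p$ and $\omega:=c_1(\omega_q)$. Since $q$ has relative dimension one, $q_*$ lowers degree by one; hence $\mathrm{ch}_1$ is obtained by pushing forward the degree-two part $\tfrac12 D^2-\tfrac12\omega D+\tfrac1{12}\omega^2$ of the integrand, and $\mathrm{ch}_2$ by pushing forward its degree-three part $\tfrac16 D^3-\tfrac14\omega D^2+\tfrac1{12}\omega^2 D$, the cubic term of the Todd class being zero.

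The engine of the computation is the projection formula for a section: if $r_a$ is one of the tautological sections with image $E_a$, then for every class $\gamma$ on $\cX$ one has $q_*(E_a\cdot\gamma)=r_a^*(\gamma)$, since $q\circ r_a=\mathrm{id}$. Feeding in the identities $r_i^*(\omega_q)=K_i$ and $r_p^*(\omega_q)=K_p$ (these are exactly the definitions of $K_i,K_p$ via $\sigma_i,\sigma_p$), the normal-bundle relations $r_i^*\OO_{\cX}(E_i)=-K_i$ and $r_p^*\OO_{\cX}(E_p)=-K_p$, the disjointness $E_i\cdot E_j=0$ for $i\neq j$, and the incidence identity $r_i^*\OO_{\cX}(E_p)=\Delta_{ip}=r_p^*\OO_{\cX}(E_i)$, one reduces every monomial to a polynomial in $K_i$, $K_p$ and $\Delta_{ip}$. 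The only term not of this shape is $q_*(\omega^2)$, which is Mumford's class $\kappa_1$; because $u$ is the universal curve over the \emph{smooth} locus $\cM_{g,g-1}$, the boundary correction vanishes and $q_*(\omega^2)=\kappa_1=12\lambda$, which accounts for the lone $\lambda$ appearing in part (i).

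Carrying this out, the degree-two push-forwards give $q_*(D^2)=-\sum K_i-4K_p+4\sum\Delta_{ip}$ and $q_*(\omega D)=\sum K_i+2K_p$, and assembling $\tfrac12 q_*(D^2)-\tfrac12 q_*(\omega D)+\lambda$ yields (i). For (ii) one needs the cubic push-forwards $q_*(E_a^3)=K_a^2$, $q_*(E_i^2E_p)=q_*(E_iE_p^2)=-K_i\Delta_{ip}$, together with $q_*(\omega E_a^2)=-K_a^2$ and $q_*(\omega E_iE_p)=K_i\Delta_{ip}$; substituting gives $q_*(D^3)=\sum K_i^2+8K_p^2-18\sum K_i\Delta_{ip}$, $q_*(\omega D^2)=-\sum K_i^2-4K_p^2+4\sum K_i\Delta_{ip}$ and $q_*(\omega^2 D)=\sum K_i^2+2K_p^2$, whereupon the combination $\tfrac16-\tfrac14+\tfrac1{12}$ collapses to the stated coefficients $\tfrac12$, $\tfrac52$, and $-4$.

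I expect the only genuinely delicate point to be the bookkeeping of the mixed sections $E_i$ and $E_p$: these are \emph{not} disjoint, meeting exactly along the diagonal $\Delta_{ip}=\{p=x_i\}$, so each cross-term must be routed through the incidence class $\Delta_{ip}$ rather than set to zero. A useful internal check is that a monomial such as $E_i^2E_p$ may be pushed forward by restricting along either $r_i$ or $r_p$; equating the two answers forces $\Delta_{ip}^2=-K_i\Delta_{ip}$ and, in particular, $K_i\Delta_{ip}=K_p\Delta_{ip}$, which is precisely what converts $-4\sum K_i\Delta_{ip}$ into the symmetric expression $-2\sum(K_i+K_p)\Delta_{ip}$ recorded in (ii). Beyond this bookkeeping, the argument is the routine Grothendieck--Riemann--Roch calculation sketched above.
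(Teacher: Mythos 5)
Your proposal is correct and takes essentially the same approach as the paper: the paper's proof consists precisely of applying the push-pull formula together with the identities $E_i^2=-E_i\cdot q^*(K_i)$, $E_p^2=-E_p\cdot q^*(K_p)$, $E_i\cdot c_1(\omega_q)=E_i\cdot q^*(K_i)$, $E_p\cdot c_1(\omega_q)=E_p\cdot q^*(K_p)$, $E_i\cdot E_j=0$ for $i\neq j$, $E_i\cdot E_p=E_i\cdot q^*(\Delta_{ip})$, and $q_*(c_1^2(\omega_q))=12\lambda$, which is exactly the calculus you set up. Your intermediate pushforwards and the final assembly, including the key observation that $K_i\cdot\Delta_{ip}=K_p\cdot\Delta_{ip}$ (so the cross-terms can be written symmetrically), all check out.
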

\begin{proof} We apply systematically the push-pull formula and the following identities:
$$E_{i}^2=-E_{i}\cdot q^*(K_i), \ E_p^2=-E_p\cdot q^*(K_p), \ E_i\cdot c_1(\omega_q)=E_i\cdot q^*(K_i), \ E_p\cdot c_1(\omega_q)=E_p\cdot q^*(K_p),$$
$$E_i\cdot E_j=0 \ \mbox{ for } i\neq j, \ E_i\cdot E_p=E_i\cdot q^*(\Delta_{ip}), \mbox{ and }\ q_*(c_1^2(\omega_q))=12\lambda.$$
\end{proof}
\begin{proposition}\label{interior}
One has $[\mathfrak{Ant}_g]=-4(g-7)\lambda+(4g-8)\sum_{i=1}^{g-1}\psi_i\in \mathrm{Pic}(\bm_{g, g-1})$.
\end{proposition}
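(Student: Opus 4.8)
\noindent\textit{Proof proposal.} The plan is to realize $\mathfrak{Antram}_g=u_*(\cZ)$, compute the codimension-two class $[\cZ]$ by Thom--Porteous together with Lemma~\ref{grr}, and then push forward along the universal curve $u$.

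First I would record that $\alpha\colon\cA\to R^1q_*\OO_{\cX}$ is a map of bundles of ranks $g+1$ and $g$, so that its non-surjectivity locus (equivalently, the non-injectivity locus of the dual map $(R^1q_*\OO_{\cX})^\vee\to\cA^\vee$) has the expected codimension two, and Thom--Porteous gives $[\cZ]=c_2(\cA^\vee-(R^1q_*\OO_{\cX})^\vee)$. Reading the defining four-term exact sequence in $K$-theory yields $\cA-R^1q_*\OO_{\cX}=q_!\OO_{\cX}(\sum_i E_i+2E_p)-q_*\OO_{\cX}$; since $q_*\OO_{\cX}=\OO$ is trivial and $c_2$ is insensitive to dualizing, this collapses to $[\cZ]=c_2(W)=\tfrac12\,\mathrm{ch}_1(W)^2-\mathrm{ch}_2(W)$, where $W:=q_!\OO_{\cX}(\sum_i E_i+2E_p)$. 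Substituting the two formulas of Lemma~\ref{grr} turns $[\cZ]$ into an explicit degree-two polynomial in the classes $\lambda,K_i,K_p,\Delta_{ip}$.

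The second step is the push-forward $u_*\colon A^2(\bm_{g,g-1}^{(1)})\to A^1(\bm_{g,g-1})$. Here I would assemble the intersection dictionary on the universal curve, valid over the interior $\cM_{g,g-1}$: the classes $\lambda$ and $K_i=u^*\psi_i$ are pulled back, so any monomial in these alone pushes to zero; the class $K_p$ is the first Chern class of the relative dualizing sheaf $\omega_u$, whence $u_*(K_p)=\kappa_0=2g-2$ and $u_*(K_p^2)=\kappa_1=12\lambda$; and each $\Delta_{ip}$ is a section of $u$, so that $u_*(\Delta_{ip})=1$, $u_*(K_p\cdot\Delta_{ip})=u_*(K_i\cdot\Delta_{ip})=\psi_i$ and $u_*(\Delta_{ip}^2)=-\psi_i$ (using $\Delta_{ip}^2=-\Delta_{ip}\cdot K_p$), while $\Delta_{ip}\cdot\Delta_{jp}=0$ for $i\neq j$ because the marked points stay distinct over $\cM_{g,g-1}$. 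Expanding $\tfrac12\,\mathrm{ch}_1(W)^2-\mathrm{ch}_2(W)$ and applying this dictionary term by term, the contributions collect to $-4(g-7)\lambda+(4g-8)\sum_{i}\psi_i$; as $\mathrm{Pic}(\cM_{g,g-1})_{\mathbb Q}$ is spanned by $\lambda,\psi_1,\dots,\psi_{g-1}$, no further classes can appear.

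The delicate point, and the step most prone to error, is the identification $K_p=c_1(\omega_u)$, which forces $u_*(K_p^2)$ to be Mumford's $\kappa_1=12\lambda$ rather than the pointed $\kappa_1=12\lambda+\sum_i\psi_i$; a slip here (or in the Thom--Porteous orientation, or in the section self-intersection $\Delta_{ip}^2=-\Delta_{ip}\cdot K_p$) would shift the $\psi$-coefficient. I would therefore verify $K_p=c_1(\omega_u)$ carefully from the definition $K_p=\sigma_p^*(\omega_\phi)$, noting that forgetting the points $x_1,\dots,x_{g-1}$ removes exactly the diagonal corrections that otherwise separate the cotangent class at $p$ from $c_1(\omega_u)$. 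The remaining bookkeeping is routine.
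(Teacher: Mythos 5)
Your proposal is correct and follows essentially the same route as the paper's proof: Thom--Porteous reduces $[\cZ]$ to $c_2$ of the K-theory class $W=q_!\OO_{\cX}\bigl(\sum_{i}E_i+2E_p\bigr)$, Lemma~\ref{grr} supplies $\mathrm{ch}_1(W)$ and $\mathrm{ch}_2(W)$, and your push-forward dictionary ($K_p=c_1(\omega_u)$, $u_*(K_p^2)=12\lambda$ on the interior, $u_*(\Delta_{ip}^2)=-\psi_i$, $\Delta_{ip}\cdot\Delta_{jp}=0$) reproduces exactly the paper's computation $[\mathfrak{Antram}_g]=u_*\bigl(\mathrm{ch}_1^2(\F)-2\,\mathrm{ch}_2(\F)\bigr)/2$. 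You even fix the one sign slip in the paper: its displayed identity $[\cZ]=c_2\bigl(-q_!\OO_{\cX}(\cdots)\bigr)$ would give $\tfrac12\mathrm{ch}_1^2+\mathrm{ch}_2$, whereas the correct class (and the one both you and the paper's actual computation use) is $c_2\bigl(q_!\OO_{\cX}(\cdots)\bigr)=\tfrac12\mathrm{ch}_1^2-\mathrm{ch}_2$, which is what yields $-4(g-7)\lambda+(4g-8)\sum_{i=1}^{g-1}\psi_i$.
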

\begin{proof}
We apply the results of Lemma \ref{grr}, as well as the formulas from \cite{HM} p. 55, in order  to estimate the push-forward under $u$ of the degree $2$ monomials in tautological classes. Setting $\F:=q_*\bigl(\OO_{\cX}(\sum_{i=1}^{g-1} E_i+2E_p)\bigr)$, we obtain that
$$u_*\bigl(\mathrm{ch}_1^2(\F)\bigr)=-(8g-116)\lambda+(8g-24)\sum_{i=1}^{g-1}\psi_i, \mbox{ and } \ \ u_*\bigl(\mathrm{ch}_2(\F)\bigr)=30\lambda-4\sum_{i=1}^{g-1}\psi_i,$$
hence $[\mathfrak{Ant}_g]=u_*\bigl(\mathrm{ch}_1^2(\F)-2\mathrm{ch}_2(\F)\bigr)/2$, and the claimed formula follows at once.
\end{proof}

We proceed now towards proving Theorem \ref{gaussram} and  expand the class $[\overline{\mathfrak{Ant}}_g]$ in the standard basis of the Picard group, that is,
$$[\overline{\mathfrak{Ant}}_g]= a\lambda+c\sum_{i=1}^{g-1} \psi_i-b_{\mathrm{irr}}\delta_{\mathrm{irr}}-\sum_{i=0}^g\sum_{s=0}^{i-1} b_{i:s}\delta_{i: s}.$$
We have just computed $a=-4(g-7)$ and $c=4(g-2)$. The remaining coefficients are determined by intersecting $\overline{\mathfrak{Ant}}_g$ with curves lying in the boundary of $\mm_{g, g-1}$ and understanding how $\overline{\mathfrak{Ant}}_g$ degenerates. We begin with the coefficient $b_{0: 2}$:
\begin{proposition}\label{b02}
One has the relation $(4g-6)c-(g-2)b_{0: 2}=(4g-2)(g-2)$. It follows that $b_{0: 2}=12g-22$.
\end{proposition}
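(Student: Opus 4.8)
The plan is to verify the asserted relation by intersecting the class $[\overline{\mathfrak{Antram}}_g]$ with a one-parameter family in $\mm_{g, g-1}$ designed to isolate the coefficients $c$ and $b_{0:2}$. I would fix a general curve $[C]\in \cM_g$ together with general points $x_2, \ldots, x_{g-1}\in C$, and let the remaining point $x_1$ sweep out $C$; write $\Gamma\subset \mm_{g, g-1}$ for the resulting complete curve. Concretely, $\Gamma$ arises from the isotrivial family $C\times C\to C$, with the diagonal playing the role of the section $x_1$ and the horizontal lines $\{x_j\}\times C$ playing the sections $x_j$; at each of the $g-2$ collisions $x_1=x_j$ one passes to the stable limit, a rational bubble carrying $x_1$ and $x_j$ attached to $C$, which amounts to blowing up the $g-2$ points $(x_j,x_j)$, with exceptional curves $E_2,\ldots,E_{g-1}$.

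First I would record the intersection numbers of $\Gamma$ with the generators of $\mathrm{Pic}(\mm_{g, g-1})$. Since the genus-$g$ component of every fibre is the fixed curve $C$, one has $\lambda\cdot \Gamma=\delta_{\mathrm{irr}}\cdot\Gamma=0$; the only boundary divisors met are the $\Delta_{0:\{1,j\}}$, each transversally once, so $\delta_{0:2}\cdot\Gamma=g-2$ while $\delta_{i:s}\cdot\Gamma=0$ for $(i,s)\neq(0,2)$. For the cotangent classes I would use $\omega_{\widetilde\pi}=b^*\omega_\pi+\sum_j E_j$ for the blow-down $b$: restricting along the proper transform of the diagonal gives $\psi_1\cdot\Gamma=(2g-2)+(g-2)=3g-4$, and along each horizontal section gives $\psi_j\cdot\Gamma=1$, whence $\sum_{i=1}^{g-1}\psi_i\cdot\Gamma=(3g-4)+(g-2)=4g-6$. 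Since $a\lambda$ and $b_{\mathrm{irr}}\delta_{\mathrm{irr}}$ restrict to zero, this yields $\overline{\mathfrak{Antram}}_g\cdot\Gamma=(4g-6)c-(g-2)b_{0:2}$.

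The remaining task is the geometric computation of $\overline{\mathfrak{Antram}}_g\cdot\Gamma$. I would set $M:=K_C(-x_2-\cdots-x_{g-1})$, a line bundle of degree $g$ with $h^0(M)=2$ for general choices, so that $|M|$ is a base-point-free pencil defining a degree-$g$ map $\phi:C\to \PP^1$ with only simple ramification. A point $x_1$ of $\Gamma$ lies on $\mathfrak{Antram}_g$ precisely when $M-x_1-2p$ is effective for some $p\in C$, that is, when the fibre of $\phi$ through $x_1$ contains a point $2p$ of multiplicity two and $x_1$ is one of its remaining points. By Riemann--Hurwitz $\phi$ has $4g-2$ ramification points, and over each the fibre is $2p+q_1+\cdots+q_{g-2}$ with the $q_i$ distinct from $p$ and from one another; the admissible values of $x_1$ are exactly the $q_i$, giving $g-2$ choices per ramification point. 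Because each fibre carries at most one ramification point, there is no overcounting, and the interior intersection consists of $(4g-2)(g-2)$ points. Combining with the previous paragraph gives $(4g-6)c-(g-2)b_{0:2}=(4g-2)(g-2)$, and substituting $c=4(g-2)$ from Proposition \ref{interior} and dividing by $g-2$ produces $b_{0:2}=12g-22$.

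The hard part will be the careful justification of this last step: one must confirm that, for the general choice of marked points, $|M|$ is base-point-free with simple branching, that the enumeration of admissible $x_1$ is exhaustive and free of coincidences (e.g. $x_1=p$ never contributes, as it would force a triple point), and --- most delicately --- that $\overline{\mathfrak{Antram}}_g$ contributes nothing at the $g-2$ boundary points of $\Gamma$. For the latter I would argue that the limiting nodal curves are general inside the boundary divisors $\Delta_{0:\{1,j\}}$ and therefore avoid the codimension-one locus cut out there by $\overline{\mathfrak{Antram}}_g$, so that the transverse interior count genuinely computes the intersection number $\overline{\mathfrak{Antram}}_g\cdot\Gamma$.
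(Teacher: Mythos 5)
Your proof is correct and takes essentially the same approach as the paper: the paper's own argument uses the identical test curve obtained by letting one marked point sweep a fixed general curve (its $C_{x_{g-1}}$, your $\Gamma$), records the same intersection numbers $\sum_{i=1}^{g-1}\psi_i\cdot \Gamma=4g-6$ and $\delta_{0:2}\cdot \Gamma=g-2$, and obtains the count $(4g-2)(g-2)$ from the $4g-2$ simple ramification points of the pencil $K_C\otimes\OO_C(-x_2-\cdots-x_{g-1})\in W^1_g(C)$ with $g-2$ residual points per fibre. Your closing remarks on transversality and the vanishing of boundary contributions are sound and merely make explicit what the paper leaves implicit.
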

\begin{proof} We fix a general pointed curve $[C, x_1, \ldots, x_{g-2}]\in \cM_{g, g-2}$ and consider the family
$$C_{x_{g-1}}:=\bigl\{[C, x_1, \ldots, x_{g-2}, x_{g-1}]: x_{g-1}\in C\bigr\}\subset \mm_{g, g-1}.$$
The curve $C_{x_{g-1}}$ is the fibre over $[C, x_1, \ldots, x_{g-2}]$ of the morphism $\mm_{g, g-1}\rightarrow \mm_{g, g-2}$ forgetting the point labelled by $x_{g-1}$. Note that $C_{x_{g-1}}\cdot \psi_i=1$ for $i=1, \ldots, g-2$ and $C_{x_{g-1}}\cdot \psi_{g-1}=3g-4=2g-2+(g-2)$. Obviously
$C_{x_i}\cdot \delta_{0: 2}=g-2$ and the points in the intersection correspond to the case when $x_{g-1}$ collides with one of the fixed points $x_1, \ldots, x_{g-2}$. The intersection of $C_{x_i}$ with the remaining generators of $\mathrm{Pic}(\mm_{g, g-1})$ is equal to zero. We set $A:=K_C\otimes \OO_C(-x_1-\cdots-x_{g-2})\in W^1_g(C)$. By the generality assumption, $h^0(C, A)=2$, and all ramification points of $A$ are simple. Pointed curves in the intersection $C_{x_{g-1}}\cdot \overline{\mathfrak{Ant}}_g$ correspond to points $x_{g-1}\in C$, such that there exists a (ramification) point $p\in C$ with $H^0\bigl(C, A\otimes \OO_C(-2p-x_{g-1})\bigr)\neq 0$. The pencil $A$ carries $4g-2$ ramification points. For each of them there are $g-2$ possibilities of choosing $x_{g-1}\in C$ in the same fibre as the ramification point, hence the conclusion follows.
\end{proof}

Next we determine the coefficient $b_{\mathrm{irr}}$. First we note that the relation
\begin{equation}\label{ellpencil}
a-12b_{\mathrm{irr}}+b_{1:0}=0
\end{equation}
holds. Indeed, the divisor $\overline{\mathfrak{Ant}}_g$ is disjoint from the curve in $\Delta_{1: 0}\subset \mm_{g, g-1}$ obtained from a  fixed pointed curve $[C, x_1, \ldots, x_{g-1}, q]\in \mm_{g-1, g}$, by attaching at the point $q$ a pencil of plane cubics along a section of the pencil induced by one of the $9$ base points.
\begin{proposition}\label{birr}
One has the relation $b_{\mathrm{irr}}=2$.
\end{proposition}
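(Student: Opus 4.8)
The plan is to pin down $b_{\mathrm{irr}}$ by a single test-curve computation, in the spirit of the proof of Proposition \ref{b02}, but using a family that meets $\delta_{\mathrm{irr}}$ nontrivially. The point to keep in mind is that the fibre-type families used so far (fix a curve, move one marked point) are fibres of a forgetful map and therefore have zero intersection with $\delta_{\mathrm{irr}}=\phi^*(\delta_{\mathrm{irr}})$, so they cannot detect $b_{\mathrm{irr}}$. I would instead fix a general pointed curve $[C', x_1, \ldots, x_{g-1}]\in \cM_{g-1, g-1}$ together with a general point $y\in C'$, and form the moving-node family
$$B:=\bigl\{[C'/(y\sim t),\, x_1, \ldots, x_{g-1}]: t\in C'\bigr\}\subset \mm_{g, g-1}, \qquad B\cong C',$$
whose generic member is an irreducible one-nodal curve of arithmetic genus $g$ with normalization $C'$; by construction $B\subset\Delta_{\mathrm{irr}}$.

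The first step is to record the numerical intersections of $B$. Since the Hodge bundle of $C'/(y\sim t)$ is an extension of the trivial residue line by the constant bundle $H^0(\omega_{C'})$, one has $B\cdot\lambda=0$. As the marked points are fixed, $B\cdot\psi_i$ records only the single collision of the moving branch $t$ with $x_i$, giving $B\cdot\psi_i=1$ and $\sum_{i=1}^{g-1}B\cdot\psi_i=g-1$; moreover $B\cdot\delta_{0:2}=0$, no two marked points ever meeting. The intersection $B\cdot\delta_{\mathrm{irr}}$ is a self-intersection, computed by pulling back along the gluing map $\xi:\overline{\cM}_{g-1, g+1}\to \mm_{g,g-1}$ that identifies the points labelled $y$ and $t$: using $\xi^*\delta_{\mathrm{irr}}=-\psi_y-\psi_t+(\cdots)$ together with the elementary values $\psi_t\cdot B'=3g-4$ and $\psi_y\cdot B'=1$ on the lift $B'$, the main term is $-(3g-3)$, to be corrected at the special members $t=y$ and $t=x_i$.

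The second step is the geometric count of $B\cdot\overline{\mathfrak{Antram}}_g$. For generic $t$ there is a unique (up to scalar) form $\eta_t$ on $C'/(y\sim t)$ vanishing at $x_1, \ldots, x_{g-1}$; on the normalization it is the unique section of $L_t:=K_{C'}(y+t-x_1-\cdots-x_{g-1})$, a line bundle of degree $g-1$, with residual zero divisor $Z_t$ of degree $g-1$, and the member lies on $\overline{\mathfrak{Antram}}_g$ exactly when $Z_t$ is non-reduced. Globalising over $C'\times C'$, the universal zero divisor is the section scheme $\Gamma$ of $\mathcal{L}=p_1^*\bigl(K_{C'}(y-x_1-\cdots-x_{g-1})\bigr)\otimes\OO(\Delta)$ (normalised along the base), and a double zero of $\eta_t$ is precisely a ramification point of $p_2|_\Gamma:\Gamma\to C'$. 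Computing $[\Gamma]$, then $g(\Gamma)$ by adjunction and the ramification degree by Riemann--Hurwitz, produces the bulk count $4(g-1)(g-2)$ of double zeros at smooth points of $C'/(y\sim t)$, which reassuringly equals $c\cdot\sum_i(B\cdot\psi_i)$. The decisive extra contribution comes from antiramification absorbed by the node: as $t\to y$ or $t\to x_i$, and as a zero of $\eta_t$ migrates into a branch of the node, $B$ meets $\overline{\mathfrak{Antram}}_g$ in points over $\delta_{\mathrm{irr}}$ and $\delta_{1:0}$. Comparing
$$B\cdot\overline{\mathfrak{Antram}}_g = 4(g-1)(g-2)-b_{\mathrm{irr}}(B\cdot\delta_{\mathrm{irr}})-b_{1:0}(B\cdot\delta_{1:0})-\cdots,$$
the discrepancy between the total and the bulk count is exactly the boundary term $-\sum b_\bullet(B\cdot\delta_\bullet)$; I would then eliminate the unknown $b_{1:0}$ using the relation (\ref{ellpencil}), $a-12b_{\mathrm{irr}}+b_{1:0}=0$, already supplied by the elliptic-tail family, and solve for $b_{\mathrm{irr}}=2$.

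The main obstacle is precisely this boundary bookkeeping: pinning down the stable-reduction picture, and hence the exact values of $B\cdot\delta_{\mathrm{irr}}$ and $B\cdot\delta_{1:0}$ with their multiplicities, at $t=y$ (where the shrinking vanishing cycle produces a nodal elliptic tail) and at $t=x_i$, and correctly enumerating the antiramification points that collapse onto a branch of the node. An attractive alternative, which would explain the $g$-independence of $b_{\mathrm{irr}}=2$, is to rerun the computation of Proposition \ref{interior} over the Deligne--Mumford compactification: there the coefficient of $\delta_{\mathrm{irr}}$ is produced by Mumford's relation $q_*\bigl(c_1(\omega_q)^2\bigr)=12\lambda-\delta_{\mathrm{irr}}$ together with the behaviour of the double section $2E_p$ at the node, and the only thing to verify is that the degeneracy locus $\cZ$ extends without excess boundary components, so that its closure is computed by the naive extension of the Chern-class formula.
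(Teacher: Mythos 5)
Your test curve is exactly the paper's: the family $C_{\mathrm{irr}}$ obtained by gluing a fixed general point ($q$ in the paper, your $y$) of a genus $g-1$ curve to a moving point $t$, and your partial numerics are consistent with the paper's values ($B\cdot \psi_i=1$, $B\cdot\lambda=B\cdot\delta_{0:2}=0$; your ``main term $-(3g-3)$ plus corrections'' does resolve to $B\cdot \delta_{\mathrm{irr}}=-(2g-2)$ once the $g-1$ extra non-separating nodes at $t=x_i$ are counted, and the stable reduction at $t=y$ gives $B\cdot\delta_{1:0}=1$). You also invoke the same auxiliary relation (\ref{ellpencil}), and your bulk count $4(g-1)(g-2)$ agrees with the paper's. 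But the proposal stalls at exactly the decisive point, which you yourself flag as ``the main obstacle'': whether $B$ meets $\overline{\mathfrak{Antram}}_g$ in extra points ``absorbed by the node.'' Granting the boundary intersection numbers above, if $X$ denotes that unknown extra contribution, your equation combined with (\ref{ellpencil}) gives $X=(2g-2)b_{\mathrm{irr}}-b_{1:0}=(2g-14)(b_{\mathrm{irr}}-2)$, so the assertion $b_{\mathrm{irr}}=2$ is \emph{equivalent} to the assertion $X=0$. Leaving $X$ undetermined is therefore not bookkeeping left to the reader; it is the entire content of the proposition.

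The paper closes this gap by never splitting the count into bulk plus node corrections. It identifies a point of $C_{\mathrm{irr}}\cap \overline{\mathfrak{Antram}}_g$ with a pair $(t,p)\in C\times C$ such that $H^0\bigl(C, K_C(q+t-2p-\sum_{i=1}^{g-1}x_i)\bigr)\neq 0$ --- a condition that makes sense uniformly for \emph{all} pairs, including $p\in\{q,t\}$ or $p=x_i$, so no separate analysis at the node is needed --- and computes the class of this degeneracy locus by Grothendieck--Riemann--Roch and Porteous on $C\times C\times C$, using $\F=p_1^*\bigl(K_C(q-\sum_i x_i)\bigr)\otimes \OO(\Delta_{12}-2\Delta_{13})$. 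The resulting number $4(g-2)(g-1)$ is thus the \emph{total} intersection number, not a bulk term; equating it with the class pairing gives $(2g-2)b_{\mathrm{irr}}=b_{1:0}$ outright, and (\ref{ellpencil}) then forces $b_{\mathrm{irr}}=2$. Your closing alternative --- rerunning Proposition \ref{interior} over $\mm_{g,g-1}$ via $q_*(c_1^2(\omega_q))=12\lambda-\delta_{\mathrm{irr}}$ --- faces the same unresolved issue in a harder form (one must rule out excess boundary components of the degeneracy locus of $\alpha$, e.g.\ where sections of the dualizing sheaf vanish on a whole component), which is precisely why the paper determines all boundary coefficients by test curves instead.
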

\begin{proof} We fix a general curve $[C, q, x_1, \ldots, x_{g-1}]\in \mm_{g-1, g}$, and we define the family
$$C_{\mathrm{irr}}:=\bigl\{[C/t\sim q, x_1, \ldots, x_{g-1}]: t\in C\}\subset \Delta_{\mathrm{irr}}\subset \mm_{g, g-1}.$$
Then $C_{\mathrm{irr}}\cdot \psi_i=1$ for $i=1, \ldots, g-1$, $C_{\mathrm{irr}}\cdot \delta_{\mathrm{irr}}=-(\mathrm{deg}(K_C)+2)=-2g+2$, and finally $C_{\mathrm{irr}}\cdot \delta_{1: 0}=1$. All other intersection numbers with generators of $\mathrm{Pic}(\mm_{g, g-1})$ equal zero.

We fix an effective divisor $D\in C_e$ of degree $e\geq g$ (for instance $D=q+\sum_{i=1}^{g-1} x_i$). For each pair of points  $(t, p)\in C\times C$, there is an exact sequence on $C$
$$0\rightarrow H^0\bigl(C, K_C(q+t-2p-\sum_{i=1}^{g-1} x_i)\bigr)\rightarrow H^0\bigl(C, K_C(D+q+t-2p-\sum_{i=1}^{g-1} x_i)\bigr)\stackrel{\beta_{t, p}}\rightarrow$$
$$H^0\bigl(D, K_C(D+q+t-2p-\sum_{i=1}^{g-1}x_i)\bigr)\rightarrow H^1\bigl(C, K_C(q+t-2p-\sum_{i=1}^{g-1} x_i)\bigr)\rightarrow 0.$$
The intersection $C_{\mathrm{irr}}\cdot \overline{\mathfrak{Ant}}_g$ corresponds to the locus of pairs $(t, p)\in C\times C$ such that the map $\beta_{t, p}$ is not injective. On the triple product of $C$, we consider the projections $$f: C\times  C\times C\rightarrow C\times C\ \mbox{ and }\ p_1:C\times C\times C\rightarrow C$$ given by
$f(x, t, p)=(t, p)$ and $p_1(x, t, p)= x$,
and we set $A:=K_C(q-\sum_{i=1}^{g-1} x_i)\in \mathrm{Pic}^{g-2}(C)$. We denote by $\Delta_{12}, \Delta_{13}\subset C\times C\times C$ the corresponding diagonals, and finally, introduce the line bundle on $C\times C\times C$
$$\F:=p_1^*(A)\otimes \OO_{C\times C\times C}(\Delta_{12}-2\Delta_{13}).$$
Applying the Porteous formula, one can write
$$C_{\mathrm{irr}}\cdot \overline{\mathfrak{Ant}}_g=c_2(R^1 f_*\F-R^0 f_*\F)=\frac{\mathrm{ch}_1^2(f_!\F)+2\mathrm{ch}_2(f_!\F)}{2}\in A^2(C\times C).$$
We evaluate $\mathrm{ch}_i(f_!\F)$ using GRR applied to the morphism $f$, that is,
$$\mathrm{ch}(f_!\F)=f_*\Bigl[\Bigl(\sum_{a\geq 0}\frac{\bigl(p_1^*(A)+\Delta_{12}-2\Delta_{13}\bigr)^a}{a!}\Bigr)\cdot \Bigl(1-\frac{1}{2}p_1^*(K_C)\Bigr)\Bigr].$$
Denoting by $F_1, F_2\in H^2(C\times C)$ the classes of the fibres, after calculations one finds that
$$\mathrm{ch}_1(f_*\F)=-(g-2)F_1-4(g-2)F_2-2\Delta_C\in H^2(C\times C, \mathbb Q),$$
$$\mathrm{ch}_2(f^*\F)=-2(g-2)\in H^4(C\times C, \mathbb Q),$$
that is, $c_2(R^1 f_*\F-R^0 f_*\F)=4(g-2)(g-1)$. Coupled with (\ref{ellpencil}), this yields $b_{\mathrm{irr}}=2$.
\end{proof}

We are left with the task of determining the coefficient of $\delta_{i: s}$. This requires solving a number of enumerative geometry problems in the spirit of de Jonqui\`eres' formula. We fix integers $0\leq i\leq g$ and $s\leq i-1$ and general pointed curves
$[C, x_1, \ldots, x_s]\in \mm_{i, s}$ and $[D, q, x_{s+1}, \ldots, x_{g-1}]\in \mm_{g-i, g-s}$. We then construct a family of stable curves of genus $g$, by identifying the fixed point $q\in D$ with a variable point, also denoted by $q$, on the component $C$:
$$C_{i: s}:=\big\{[C\cup_q D, x_1, \ldots, x_s, x_{s+1}, \ldots, x_{g-1}]: q\in C\bigr\}\subset \Delta_{i: s}\subset \mm_{g, g-1}.$$
We summarize the non-zero intersection numbers of $C_{i: s}$ with generators of $\mathrm{Pic}(\mm_{g, g-1})$:
$$C_{i:s}\cdot \psi_1=\cdots =C_{i: s}\cdot \psi_s=1,\ \ C_{i: s}\cdot \delta_{i: s-1}=i, \ C_{i:s}\cdot \delta_{i: s}=2i-2+s.$$

\begin{theorem}\label{bis}
We fix integers $0\leq i\leq g$ and $0\leq s\leq i-1$. Then, the following formula holds:
$$b_{i: s}=(2g-3)s^2-(4gi+2g-10i+1)s+2gi^2-7i^2+2gi+i+2.$$
\end{theorem}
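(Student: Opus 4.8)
The plan is to pin down $b_{i:s}$ by intersecting $\overline{\mathfrak{Antram}}_g$ against the test curve $C_{i:s}$ and then inducting on $s$. As $q$ varies, only the gluing point of $X=C\cup_q D$ moves while $C$, $D$ and all marked points stay fixed; hence the Hodge bundle $H^0(\omega_C)\oplus H^0(\omega_D)$ is constant and $C_{i:s}\cdot\lambda=0$, the curve never acquires a non-separating node so $C_{i:s}\cdot\delta_{\mathrm{irr}}=0$, and among the remaining classes only $\psi_1,\dots,\psi_s$, $\delta_{i:s-1}$ and $\delta_{i:s}$ meet the family. Substituting the recorded intersection numbers and the value $c=4(g-2)$ from Proposition~\ref{interior} into the expansion of $[\overline{\mathfrak{Antram}}_g]$ yields the single relation
$$
C_{i:s}\cdot\overline{\mathfrak{Antram}}_g \;=\; 4(g-2)\,s \;-\; i\,b_{i:s-1}\;-\;(2i-2+s)\,b_{i:s}.
$$
With the convention $b_{i:-1}=0$, the case $s=0$ fixes $b_{i:0}$ for $i\ge 2$, and each larger $s$ then determines $b_{i:s}$ from $b_{i:s-1}$ (the cases $i\le 1$ being vacuous or requiring a separate elementary test curve, since there $2i-2+s$ can vanish). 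Everything is thus reduced to evaluating the left-hand intersection number as a function of $(g,i,s)$.

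This evaluation is the geometric core. A point of $C_{i:s}\cap\overline{\mathfrak{Antram}}_g$ is a position of the node $q\in C$ at which the limiting antiramification condition holds on $X=C\cup_q D$; since $\overline{\mathfrak{Antram}}_g$ is a \emph{closure}, the condition must be read from the limit of the canonical series rather than naively from $\omega_X$. I would therefore pass to the Eisenbud--Harris limit linear series associated with $|K_X|$, whose $C$- and $D$-aspects are $\mathfrak{g}^{g-1}_{2g-2}$'s with ramification at $q$ governed by the compatibility inequalities. The conditions ``vanish at $x_1,\dots,x_{g-1}$'' and ``acquire a double point at some $p$'' translate into vanishing and ramification conditions on these aspects. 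Because $D$ together with $x_{s+1},\dots,x_{g-1}$ is general and fixed, the $D$-aspect imposes a fixed Schubert condition, and the enumeration localizes to the varying genus-$i$ curve $C$ with its moving point $q$ and fixed points $x_1,\dots,x_s$.

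I expect the count to break up, according to where the double point $p$ specializes, into the cases $p\in C$, $p\in D$ and the boundary collision $p\to q$; each is a de Jonqui\`eres-type tangency number that I would compute as a Chern number on a symmetric product of $C$, weighted by the fixed multiplicity coming from the $D$-aspect. Summing these contributions produces $C_{i:s}\cdot\overline{\mathfrak{Antram}}_g$ as an explicit polynomial in $g,i,s$. The hard part will be the bookkeeping of this limit: checking that the aspects are in general position, deciding which smoothings genuinely lie in the closure, and assigning the correct multiplicities to the degenerate contributions at the node and on the fixed component $D$. Once the polynomial is known, I would feed it into the displayed recursion and verify by induction on $s$ that the solution is the asserted cubic, with base value $b_{i:0}=2i^3-5i^2-3i+4g+2$ and inductive step amounting to a routine algebraic identity.
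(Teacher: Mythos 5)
Your strategy is the same as the paper's: pair $\overline{\mathfrak{Antram}}_g$ with the test curve $C_{i:s}$, evaluate the resulting intersection number by a limit-linear-series analysis split according to the position of the double point $p$, and induct on $s$ starting from $b_{i:-1}=0$. But the linear relation on which your whole induction rests is wrong. Since $C_{i:s}$ lies \emph{inside} $\Delta_{i:s}$, the number $C_{i:s}\cdot\delta_{i:s}$ is a normal-bundle degree: it equals the self-intersection of the proper transform of the diagonal in $C\times C$ blown up at the $s$ points $(x_j,x_j)$, namely $-(2i-2+s)$, so it is \emph{negative}; and $C_{i:s}\cdot\delta_{i:s-1}=s$ (one transverse point for each collision $q=x_j$, $j=1,\ldots,s$), not $i$. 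The correct relation is therefore $C_{i:s}\cdot\overline{\mathfrak{Antram}}_g=4(g-2)s-s\,b_{i:s-1}+(2i-2+s)\,b_{i:s}$, which is exactly the left-hand side of the paper's recursion (\ref{recursion}). (The table of intersection numbers printed just before the theorem does contain the two misprints you copied, but the recursion the paper actually solves uses the corrected values.) The sign error is fatal, not cosmetic: for $i\geq 2$ and $s=0$ your relation reads $(2i-2)\,b_{i:0}=-\,C_{i:0}\cdot\overline{\mathfrak{Antram}}_g$, and since $C_{i:0}$ is not contained in the divisor (the general gluing point $q$ does not satisfy the antiramification condition, as the limit-series analysis shows), the right-hand side is $\leq 0$. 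This forces $b_{i:0}\leq 0$, contradicting the base value $b_{i:0}=2i^3-5i^2-3i+4g+2$ that you yourself assert and that the theorem's formula requires. So no enumerative input whatsoever can make your recursion produce the stated formula.

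The remainder of your plan points in the right direction but stays at the level of intentions, whereas those counts are the actual content of the proof. In the paper: for $p\in D$, the point $q$ must be a ramification point of $K_C\otimes\OO_C(-x_1-\cdots-x_s)\in W^{i-s-1}_{2i-2-s}(C)$, and $p$ one of the $4(g-i)$ ramification points of the pencil $K_D\otimes\OO_D((i-s+2)q-x_{s+1}-\cdots-x_{g-1})$, giving a Pl\"ucker-times-Hurwitz product; for $p\in C$ the contribution is the de Jonqui\`eres-type number $a(i,s)$ of Proposition \ref{enum}; the collision $p\to q$ is excluded by passing to the semistable model with a rational bridge $E$ inserted at the node and checking that $\omega_E$ cannot carry the extra double zero; and each intersection point is shown to count with multiplicity one as in \cite{EH2}. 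If you repair the two intersection numbers and carry out (or cite) these three steps, your argument becomes the paper's.
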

In the proof an essential role is played by the following calculation:
\begin{proposition}\label{enum}
Let $i, s$ be integers such that $0\leq s\leq i-1$, and $[C, x_1, \ldots, x_s]\in \cM_{i, s}$ a general pointed curve. The number of pairs $(q, p)\in C\times C$ such that
$$H^0\bigl(C, K_C\otimes \OO_C(-x_1-\cdots-x_s-(i-s-1)q-2p)\bigr)\neq 0,$$
is equal to
$$a(i, s):=2(i-s-1)(2i^3-5i^2+i+2-2i^2s+3is).$$
\end{proposition}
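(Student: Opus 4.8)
The plan is to eliminate the point $p$ and reduce the two-dimensional count on $C\times C$ to a one-dimensional intersection number inside the symmetric product $C_{i-1}$. First I would fix the divisor $D_0:=x_1+\cdots+x_s$ and, for a general point $q\in C$, consider the line bundle $M_q:=K_C(-D_0-(i-s-1)q)$, which has degree $i-1$. Since $[C,x_1,\ldots,x_s]$ is general, a general $q$ avoids the finitely many ramification points of the series $|K_C(-D_0)|$, so the $i-s-1$ conditions of vanishing to order $i-s-1$ at $q$ are independent on the $(i-s)$-dimensional space $H^0(K_C(-D_0))$; hence $h^0(M_q)=1$ and $M_q=\OO_C(R_q)$ for a unique effective divisor $R_q$ of degree $i-1$. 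This gives a morphism $f:C\to C_{i-1}$, $q\mapsto R_q$. The key point is that a pair $(q,p)$ lies in the locus of Proposition \ref{enum} exactly when $H^0(M_q(-2p))\neq 0$, that is---$R_q$ being the only effective divisor in $|M_q|$---exactly when $R_q\geq 2p$; equivalently, $p$ is a point where $R_q$ is non-reduced. Thus $a(i,s)=\deg f^*[\mathrm{NR}]$, where $\mathrm{NR}\subset C_{i-1}$ is the locus of non-reduced divisors.

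I would then carry out three computations in the standard basis $x,\theta$ of $N^1(C_{i-1})$. The class of the non-reduced locus in $C_d$ is $[\mathrm{NR}]=2(d+g-1)x-2\theta$; with $d=i-1$ and $g=i$ this reads $[\mathrm{NR}]=(4i-4)x-2\theta$, so that $a(i,s)=(4i-4)\,f^*x-2\,f^*\theta$. For $f^*\theta$, I compose $f$ with the Abel--Jacobi map $C_{i-1}\to \mathrm{Pic}^{i-1}(C)$: the resulting map $C\to \mathrm{Pic}^{i-1}(C)$ is, up to translation, $(i-s-1)$ times the Abel--Jacobi embedding, so since the numerical class $\Theta$ is multiplied by $n^2$ under multiplication by $n$, one gets $f^*\theta=(i-s-1)^2\,(C\cdot\Theta)=i(i-s-1)^2$. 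For $f^*x$, I use $f^*x=\#\{q:p_0\in\mathrm{supp}(R_q)\}$ for a fixed general $p_0$, which is the number of $q$ such that the series $|M|$, with $M:=K_C(-D_0-p_0)$ of degree $2i-s-3$ and $h^0(M)=i-s-1$, carries a section vanishing to order $\geq i-s-1$ at $q$; this equals the degree of the Wronskian of $|M|$, namely $\deg c_1\!\left(J^{i-s-2}(M)\right)=(i-s-1)\deg M+\binom{i-s-1}{2}(2i-2)=i(i-s-1)^2-(i-s-1)$. Substituting these values and simplifying produces exactly $a(i,s)=2(i-s-1)(2i^3-5i^2+i+2-2i^2s+3is)$.

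The hard part will be the enumerative bookkeeping in the reduction rather than the Chern-class arithmetic. I must verify that $f$ is an honest morphism and that $f(C)$ meets $\mathrm{NR}$ transversally in reduced points, each contributing with multiplicity one. Concretely, for general $[C,x_1,\ldots,x_s]$ I need that the finitely many $q$ with $R_q$ non-reduced carry a single simple double point of $R_q$ (no triple points and no two distinct double points), that none of these degenerations occur at a collision $q=p$ or at a point of $D_0$, that no contribution comes from the exceptional $q$ where $h^0(M_q)\geq 2$, and that the Wronskian defining $f^*x$ vanishes simply. It is precisely here that the generality hypothesis on the pointed curve is indispensable; once these transversality statements are in place, the three Chern-class computations above are routine and the proposition follows.
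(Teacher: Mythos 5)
You should know at the outset that the paper never actually proves Proposition \ref{enum}: it is stated as the enumerative input to Theorem \ref{bis}, whose proof uses the number $a(i,s)$ and disposes of the attendant multiplicity questions by citation to \cite{EH2}, Lemma 3.4. So there is no argument in the text to compare yours against step by step; what I can confirm is that your argument is sound in structure and correct in every computation, and that it proceeds by classical symmetric-product intersection theory (the classes $x,\theta$, the MacDonald diagonal formula, Pl\"ucker) --- the same toolkit the paper uses in Section 3 and in Corollary \ref{symmclass} --- rather than by the Grothendieck--Riemann--Roch/Porteous computations on products of curves that the paper employs for the analogous coefficients in Propositions \ref{interior} and \ref{birr}, which is the natural guess for the authors' intended (omitted) proof. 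Your three classes check out: the non-reduced locus in $C_d$ has class $2(d+g-1)x-2\theta$ (consistent with the Kouvidakis formula $\delta_C\equiv -\theta+(2g-2m-1)x$ quoted in the paper), giving $(4i-4)x-2\theta$ for $d=i-1$, $g=i$; the composite $C\to \mathrm{Pic}^{i-1}(C)$ is a translate of multiplication by $-(i-s-1)$ on the Jacobian, so $\deg f^*\theta=i(i-s-1)^2$ by the Poincar\'e formula; and $\deg f^*x$ is the Pl\"ucker/Wronskian number $(i-s-1)(2i-s-3)+{i-s-1\choose 2}(2i-2)=(i-s-1)(i^2-si-i-1)$ for the series $|K_C(-D_0-p_0)|$, a $\mathfrak g^{i-s-2}_{2i-s-3}$. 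Substituting gives exactly $2(i-s-1)(2i^3-5i^2+i+2-2i^2s+3is)$, and the specializations $a(3,0)=56$ and $a(4,0)=324$ agree with the paper's Remark. The transversality statements you defer are genuine obligations, and they are exactly where the generality of $[C,x_1,\ldots,x_s]$ enters; but they are the standard ones and can be discharged by pointed Brill--Noether dimension counts in the style of Eisenbud--Harris (or of the paper's reference [F1]). For instance, a point $q$ with $h^0(M_q)\geq 2$ is, by Riemann--Roch, a point with $h^0\bigl(\OO_C(D_0+(i-s-1)q)\bigr)\geq 2$, i.e.\ a pencil in $W^1_{i-1}(C)$ with ramification weight at least $i-s-2$ at $q$ whose residual divisor is $D_0$; since $\rho(i,1,i-1)=i-4$, the locus of pointed curves admitting such a configuration has dimension at most $3i-4+s<\dim \cM_{i,s}$, so it is avoided by the general pointed curve, and the finiteness, single-simple-double-point, and multiplicity-one claims are handled by the same kind of count --- precisely the device the paper itself invokes via \cite{EH2}. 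I therefore regard your proposal as a correct proof modulo these standard verifications; what your route buys is an elementary, closed-form computation, at the price of needing slightly stronger generality statements than a degeneracy-locus class computation would.
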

\begin{remark} By specializing, one recovers well-known formulas in enumerative geometry. For instance, $a(3, 0)=56$ is twice the number
of bitangents of a smooth plane quartic, whereas $a(4, 0)=324$ equals the number of canonical divisors of type $3q+2p+x\in |K_C|$, where
$[C]\in \cM_4$. This matches de Jonqui\`eres' formula, cf. \cite{ACGH} page 359.
\end{remark}
\noindent
\emph{Proof of Theorem \ref{bis}.}
We fix a general point $[C\cup_q D, x_1, \ldots, x_{g-1}]\in C_{i:s}\cdot \overline{\mathfrak{Ant}}_g$, corresponding to a point $q\in C$. We shall show that
 $q$ is not one of the marked points $x_1, \ldots, x_s$ on $C$, then give a  characterization of such points and count their number.

 Using the geometric description of $\overline{\mathfrak{Ant}}_g$, the stable curve $C\cup_q D$ possesses a sublimit linear series $\mathfrak g^0_{2g-2}$ of the canonical limit linear series $\mathfrak g^{g-1}_{2g-2}$ on $C\cup_q D$, that is, a pair of non-zero sections of the degree $2g-2$ line bundles on $C$ and $D$ obtained by twisting the respective canonical bundles by divisors supported on $q$, that is,
$$\omega_D\in H^0\bigl(D, K_D\otimes \OO_D(2iq)\bigr) \  \ \mbox{ and } \ \ \omega_C\in H^0\bigl(C, K_C\otimes \OO_C(2g-2i)q\bigr)$$
satisfying the compatibility condition
$\mbox{ord}_q(\omega_C)+\mbox{ord}_q(\omega_D)\geq 2g-2$ coming  from the definition of a limit linear series \cite{EH}. Furthermore, the section $(\omega_C, \omega_D)$ vanishes doubly at an unspecified point $p\in C\cup D$ as well as along the divisor $x_1+\cdots+x_{g-1}$.

\vskip 2pt
We distinguish two cases depending on the position of the point $p$. If $p\in D$ then
$$\mathrm{div}(\omega_C)\geq x_1+\cdots+x_s,\ \ \ \mathrm{div}(\omega_D)\geq x_{s+1}+ \cdots+x_{g-1}+2p.$$
Since the points $q, x_{s+1}, \ldots, x_{g-1}\in D$ are general, we find that $\mbox{ord}_q(\omega_D)\leq i+s-2$. Moreover,
$K_D\otimes \OO_D((i-s+2)q-x_{s+1}-\cdots-x_{g-1})\in W_{g-i+1}^1(D)$ is a pencil, and $p\in D$ is one of its (simple) ramification points. The Hurwitz formula gives $4(g-i)$ choices for such $p\in D$.

By compatibility, $\mbox{ord}_q(\omega_C)\geq 2g-i-s$. A parameter count implies that equality must hold. The condition $H^0\bigl(C, K_C\otimes \OO_C(-x_1-\cdots -x_s-(i-s)q\bigr)\neq 0$, is equivalent to asking that $q\in C$ be a ramification point of
$K_C\otimes \OO_C(-\sum_{j=1}^s x_j)\in W_{2i-2-s}^{i-s-1}(C)$. Since the points $x_1, \ldots, x_s\in C$ are chosen to be general, all ramification points of this linear series are simple and occur away from the marked points. From Pl\"ucker's formula, the number of ramification points equals $(i-s)(i^2-1-is)$. Multiplying this by the number of choices for $p\in D$, we obtain a total contribution of $4(g-i)(i-s)(i^2-is-1)$ to the intersection $C_{i:s}\cdot \overline{\mathfrak{Ant}}_g$ stemming from the case when $p\in D$. The proof that each of these points of intersection is to be counted with multiplicity $1$ is standard and proceeds along the lines of \cite{EH2} Lemma 3.4.
\vskip 4pt

We assume now that $p\in C$. Keeping the notation from above, it follows that $\mbox{ord}_q(\omega_D)=i+s-1$ and $\mbox{ord}_q(\omega_C)=2g-i-s-1$.  Therefore $$0\neq \omega_C\in H^0\Bigl(C, K_C\otimes \OO_C(-\sum_{j=1}^s x_j-(i-s-1)q-2p)\Bigr).$$
The section $\omega_D$ is uniquely determined up to multiplication by scalars, whereas there are $a(i, s)$ choices on the side of $C$, each counted with multiplicity $1$.

In principle, the double zero of the limit holomorphic form could specialize to the point of attachment $q\in C\cap D$, and we prove that this would contradict our generality hypothesis. One considers the semistable curve $X:=C\cup_{q_1} E\cup_{q_2} D$, obtained from $C\cup D$ by inserting a smooth rational component $E$ at $q$, where  $\{q_1\}:=C\cap E$ and $\{q_2\}:=D\cap E$. There also exist non-zero sections $$\omega_D\in H^0\bigl(D, K_D(2iq_2)\bigr), \ \omega_E\in H^0\bigl(E, \OO_E(2g-2)\bigr),\  \mbox{  } \  \omega_C\in H^0\bigl(C, K_C((2g-2i)q_1)\bigr),$$ satisfying
$\mbox{ord}_{q_1}(\omega_C)+\mbox{ord}_{q_1}(\omega_E)\geq 2g-2$ and $\mbox{ord}_{q_2}(\omega_E)+\mbox{ord}_{q_2}(\omega_D)\geq 2g-2$.
Furthermore, $\omega_E$ vanishes doubly at a point $p\in \{q_1, q_2\}^c$. Since $\omega_C$ and $\omega_D$ also vanish along the divisors $x_1+\cdots+x_s$ and $x_{s+1}+\cdots+x_{g-1}$ respectively, one obtains the inequalities $\mbox{ord}_{q_1}(\omega_C)\leq 2g-i-s$ and $\mbox{ord}_{q_2}(\omega_D)\leq i+s-1$: hence by compatibility, $\mbox{ord}_{q_1}(\omega_E)+\mbox{ord}_{q_2}(\omega_E)\geq 2g-3$. This rules out the possibility of a further double zero and shows that this case does not occur.

\vskip 3pt
To summarize, keeping in mind that the $\psi$-coefficient of $[\overline{\mathfrak{Ant}}_g]$ is equal to $4g-8$, we find the relation
\begin{equation}\label{recursion}
(2i-2+s)b_{i: s}-sb_{i: s-1}+s(4g-8)=4(g-i)(i-s)(i^2-is-1)+a(i, s).
\end{equation}
We have by convention $b_{i: -1}=0$ By induction, we find using recursion (\ref{recursion}) the claimed formula for $b_{i: s}$.
 \hfill $\Box$
\vskip 4pt

As already explained, having calculated the class $[\overline{\mathfrak{Ant}}_g]\in \mathrm{Pic}(\mm_{g, g-1})$ and using known bounds on the slope $s(\mm_g)$, one derives that $\T_g$ is of general type when $g\geq 12$. It remains to discuss the last cases in Theorem \ref{class} and thus complete the birational classification of $\T_g$:
\vskip 3pt

\noindent \emph{End of proof of Theorem \ref{class}.} We noted in the Introduction that for $g\leq 9$ the space $\T_g$ is unirational, being the image of a variety which is birational to a Grassmann bundle over the rational variety $V_g^{g-1}$. When $g\in \{10, 11\}$, the space $\mm_{g, g-1}$ is uniruled \cite{FP}. This implies the uniruledness of $\T_g$ as well.
\hfill $\Box$

\section{The Kodaira dimension of $\cc_{g, n}$}
In this section we provide results concerning the Kodaira dimension of the symmetric product $\cc_{g, n}$, where $n\leq g-2$. There are two cases depending on the parity
of the difference $g-n$. When $g-n$ is even, we introduce a subvariety inside $\mathcal{C}_{g, n}$, consisting of divisors $D\in C_n$ which appear in a fibre of a pencil of degree $(g+n)/2$ on a curve $[C]\in \cM_g$. We set integers $g\geq 1$ and $1\leq m\leq g/2$, then consider the locus
$$\mathcal{F}_{g, m}:=\Bigl\{[C, x_1, \ldots, x_{g-2m}]\in \cM_{g, g-2m}:\exists A\in W^1_{g-m}(C)
\mbox{ with  }\ H^0\bigl(A(-\sum_{j=1}^{g-2m}x_j)\bigr)\neq 0\Bigr\}.$$ A parameter count shows that $\mathcal{F}_{g, m}$ is expected to be an effective divisor on $\mm_{g, g-2m}$. We shall confirm this expectation, then compute the class of its closure in $\mm_{g, g-2m}$.

\begin{theorem}\label{misc}
Fix integers $g\geq 1$ and $1\leq m\leq g/2$, then set $n:=g-2m$ and $d:=g-m$. The class of the
compactification inside $\mm_{g, g-2m}$ of the divisor $\mathcal{F}_{g, m}$ is given by the formula:
$$[\ff_{g, m}]= \Bigl(\frac{10n}{g-2}{g-2
\choose d-1}-\frac{n}{g}{g\choose
d}\Bigr)\lambda+\frac{n-1}{g-1}{g-1\choose d-1}\sum_{j=1}^n
\psi_j-\frac{n}{g-2}{g-2\choose d-1}\delta_{\mathrm{irr}}-$$ $$-\sum_{s=2}^n\frac{s(n^2-g+sgn-sn)}{2(g-1)(g-d)}{g-1 \choose
d}\delta_{0: s}-\cdots \in \mathrm{Pic}(\mm_{g, n}).
$$
\end{theorem}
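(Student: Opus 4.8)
The plan is to realize $\ff_{g,m}$ as the degeneracy locus of a morphism of vector bundles over a suitable parameter space, exactly mirroring the treatment of $\overline{\mathfrak{Antram}}_g$ in Proposition \ref{interior} and Theorem \ref{bis}, and then to compute its class by Grothendieck--Riemann--Roch together with a sequence of test-curve intersections. First I would set up the universal Hurwitz/linear-series picture: over the open part $\cM_{g,g-2m}$ the locus $\mathcal{F}_{g,m}$ asks that the divisor $x_1+\cdots+x_n$ be contained in a fibre of some $A\in W^1_{g-m}(C)$. Since $\rho(g,1,g-m)=g-2(g-m-g+1)\cdots$ is positive, a general $[C]$ carries a positive-dimensional family of such pencils, so the condition is that $x_1+\cdots+x_n$ fail to impose independent conditions in the appropriate residual sense; concretely I would express $\mathcal{F}_{g,m}$ as the image under the forgetful map of a degeneracy locus on the universal curve over a Brill--Noether stack $\mathfrak{G}^1_{g-m}$ of pairs $(C,A)$. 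The leading (interior) coefficients $a$ and the $\psi$-coefficient are then read off from $c_1$ and $c_2$ of the relevant tautological bundle via GRR on the universal curve, precisely as in Lemma \ref{grr}.

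The binomial shape of the answer signals that the interior coefficients come from a Harris--Mumford style enumerative count: the factors ${g-2\choose d-1}$, ${g-1\choose d-1}$, ${g\choose d}$ are the standard Castelnuovo/ de Jonqui\`eres numbers counting $\mathfrak g^1_d$'s and their ramification on a general curve. I would therefore compute the class on a general one-parameter family first. Fixing a general $[C]\in\cM_g$ and letting the $n$ marked points vary, the restriction of $\ff_{g,m}$ to the fibre $C_n$ of $\varphi:\cc_{g,n}\to\cM_g$ computes the $\psi$- and $\lambda$-coefficients against the known Brill--Noether class on $\overline{\cM}_g$; this pins down the ratio $\frac{n-1}{g-1}{g-1\choose d-1}$ for $\psi_j$ and the combination $\frac{10n}{g-2}{g-2\choose d-1}-\frac{n}{g}{g\choose d}$ for $\lambda$, where the $10\lambda$ and the $-\delta_{\mathrm{irr}}$ with coefficient $\frac{n}{g-2}{g-2\choose d-1}$ arise from the Gieseker--Petri / Brill--Noether divisor class formula of Eisenbud--Harris that governs the locus where a pencil of degree $d$ degenerates.

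For the boundary coefficients $b_{0:s}$ I would use the test curves $C_{i:s}$ introduced before Theorem \ref{bis}, but specialized to the $i=0$ reducible degenerations relevant here: attach a fixed general $n$-pointed curve to a varying rational or elliptic tail and count, by the theory of limit linear series, how the pencil $A$ distributes its vanishing across the two components. Each intersection number produces a linear relation among the $b_{0:s}$, and solving the resulting recursion — entirely analogous to recursion \eqref{recursion} in the proof of Theorem \ref{bis} — yields the closed form $\frac{s(n^2-g+sgn-sn)}{2(g-1)(g-d)}{g-1\choose d}$. The main obstacle, as in Theorem \ref{bis}, will be the careful limit-linear-series bookkeeping on the reducible curves: one must verify that in each degeneration the relevant section distributes its zeros in exactly the predicted way, rule out the marked points and the node absorbing the excess vanishing (the analogue of the semistable-insertion argument with the chain $C\cup_{q_1}E\cup_{q_2}D$), and confirm that every point of intersection is counted with multiplicity one by a local computation in the style of \cite{EH2} Lemma 3.4. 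Once these multiplicities are established, the remaining coefficients (indicated by the $\cdots$) are determined by the same recursion, and the stated class follows.
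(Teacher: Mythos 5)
Your overall skeleton --- a determinantal description for the interior coefficients plus test-curve relations for the boundary ones --- matches the architecture of the paper's argument, and your boundary strategy is essentially the one used there: the $i=0$ test curves you describe are exactly the curves $\Gamma_{0:s}$ of the proof (an $s$-pointed rational tail attached to a fixed general pointed curve along a moving point of the tail), and the relation they produce, $sc+(s-2)b_{0:s}-sb_{0:s-1}=\Gamma_{0:s}\cdot \ff_{g,m}$, is your anticipated recursion; in fact the paper shows $\Gamma_{0:s}\cap \ff_{g,m}=\emptyset$ using limit linear series and the pointed Brill--Noether theorem (\cite{EH} Theorem 1.1), so no multiplicity analysis is needed for these curves. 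The genuine problems lie elsewhere. First, the interior coefficients. Because the pencil $A\in W^1_{g-m}(C)$ varies with the curve, there is no explicit line bundle on the universal curve over $\mm_{g,n}$, and GRR ``precisely as in Lemma \ref{grr}'' does not apply: one must work over a universal linear-series stack, where the relevant degeneracy locus has codimension $n-1$ (not $2$), and pushing its Porteous class forward to $\mm_{g,n}$ requires knowing the push-forwards of tautological classes from that stack. This is heavy machinery --- it is precisely the content of \cite{F2} Theorem 4.9, which the paper simply quotes for $a$, $c$ and $b_{\mathrm{irr}}$ --- and your proposal neither supplies it nor cites it. Your two fallback arguments fail concretely: restriction to a fibre of $\phi$ kills $\lambda$ and $\delta_{\mathrm{irr}}$, so fibral curves cannot ``pin down'' the $\lambda$-coefficient; and these coefficients are not proportional to the Eisenbud--Harris Brill--Noether class, since the stated formula gives $a/b_{\mathrm{irr}}=10-(g-1)(g-2)/\bigl(m(g-m)\bigr)$, which differs from the Brill--Noether slope $6+12/(g+1)$ (for instance it equals $6$ when $g=6$, $m=1$).

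Second, your recursion has no seed. The relation above involves $b_{0:s}$ with coefficient $s-2$, so it says nothing about $b_{0:2}$; worse, for $s=2$ the proposed family is constant in moduli (an automorphism of $\PP^1$ fixing $x_1,x_2$ moves the attaching point to any other position), so there is no such test curve at all --- this is why the paper's recursion only runs for $3\leq s\leq n$. The base case $b_{0:2}$ requires a separate enumerative input: the paper intersects $\ff_{g,m}$ with a fibral curve $C_{x_n}$ whose $n-1$ fixed marked points are made to coalesce at a point $q$, and identifies the intersection number with the count $(g-2m-1){g\choose m}$ of pencils $A\in W^1_{g-m}(C)$ satisfying $h^0\bigl(C, A\otimes\OO_C(-(g-2m-1)q)\bigr)\geq 1$, i.e.\ \cite{HM} Theorem A. Your passing reference to de Jonqui\`eres-type numbers does not engage with this step, and without it all the $b_{0:s}$ remain undetermined (note also that any single fibral-curve relation, such as $(2g+2n-4)c-(n-1)b_{0:2}$, mixes $c$ and $b_{0:2}$, so it cannot determine either one without the other). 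Finally, a smaller but real omission: before computing any class one must know that $\mathcal{F}_{g,m}$ is a divisor at all, i.e.\ that the projection $\Sigma\rightarrow C_{g-2m}$ from the incidence variety of pairs $(D,A)$ is generically finite; the paper proves this using \cite{F1} and a degeneration to flag curves (or \cite{HM} Theorem B), a point your proposal takes for granted.
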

\begin{proof} We fix a general curve $[C]\in \cM_g$ and consider the incidence correspondence
$$\Sigma:=\{(D, A)\in C_{g-2m}\times W^1_{g-m}(C): H^0(C, A\otimes \OO_C(-D))\neq 0\},$$
together with the projection $\pi_1:\Sigma\rightarrow C_{g-2m}$. It follows from \cite{F1} Theorem 0.5, that $\Sigma$ is pure of dimension $g-2m-1 \bigl(=\rho(g, 1, g-m)+1\bigr)$.
To conclude that $\ff_{g, m}$ is a divisor inside $\mm_{g, g-2m}$, it suffices to show that the general fibre of the map $\pi_1$ is finite, which implies that $\phi^{-1}([C])\cap \ff_{g, m}$ is a divisor in $\phi^{-1}([C])$; we also note that the fibre $\phi^{-1}([C])$ is isomorphic to the $n$-th Fulton-Macpherson configuration space of $C$. We specialize to the case $D=(g-2m)\cdot p$, where $p\in C$. One needs to show that for a general curve $[C]\in \cM_g$, there exist finitely many pencils $A\in W^1_{g-m}(C)$ with $h^0\bigl(C, A\otimes \OO_C(-(g-m)p)\bigr)\geq 1$, for some point $p\in C$. This follows from \cite{HM} Theorem B, or alternatively, by letting $C$ specialize to a flag curve consisting of a rational spine and $g$ elliptic tails, in which case the point $p$ specializes to a $(g-2m)$-torsion point on one of the elliptic tails (in particular it cannot specialize to a point on the spine). For each of these points, the pencils in question are in bijective correspondence to points in a transverse intersection of Schubert cycles in $G(2, g-m+1)$. In particular their number is finite.

In order to compute the class $[\ff_{g, m}]$, we expand it in the usual basis of $\mathrm{Pic}(\mm_{g, n})$
$$[\ff_{g, m}]= a\lambda+c\sum_{i=1}^{g-2m}\psi_i-b_{\mathrm{irr}}\delta_{\mathrm{irr}}-\sum_{i, s\geq 0} b_{i:s}\delta_{i:s},$$
then note that the coefficients $a, c$ and $b_{\mathrm{irr}}$ have been computed in \cite{F2} Theorem 4.9. The coefficient $b_{0:2}$ is determined by intersecting $\ff_{g, m}$ with a fibral curve
$$C_{x_n}:=\{[C, x_1, \ldots, x_{n-1}, x_n]: x_n\in C\}\subset \mm_{g, n},$$
corresponding to a general $(n-1)$-pointed curve $[C, x_1, \ldots, x_{n-1}]\in \mm_{g, n-1}$. By letting the points $x_1, \ldots, x_{n-1}\in C$ coalesce to a point $q\in C$, points in the  intersection $C_{x_n}\cdot \ff_{g, m}$ are in $1:1$ correspondence with points $x_n\in C$ such that $h^0\bigl(C, A(-(n-1)q-x_n)\bigr)\geq 1$. The number of these points equals $(g-2m-1){g\choose m}$ \ (see \cite{HM} Theorem A), that is,
$$(2g+2n-4)c-(n-1)b_{0:2}=C_{x_n}\cdot \ff_{g, m}=
$$
$$(m+1)\ \#\Bigl\{A\in W^1_{g-m}(C): h^0\bigl(C, A\otimes \OO_C(-(g-2m-1)q)\bigr)\geq 1\Bigr\}=(g-2m-1){g\choose m},$$
which determines $b_{0:2}$.
The coefficients $b_{0: s}$ are computed recursively, by exhibiting an explicit test curve $\Gamma_{0: s}\subset \Delta_{0: s}$ which is disjoint from $\ff_{g, m}$. We fix a general element $[C, q, x_{s+1}, \ldots, x_n]\in \mm_{g, n+1-s}$ and a general $s$-pointed rational curve $[\PP^1, x_1, \ldots, x_s]\in \mm_{0, s}$. We glue these curves along a moving point $q$ lying on the rational component:
$$\Gamma_{0: s}:=\{[\PP^1\cup_q C, x_1, \ldots, x_s, x_{s+1}, \ldots, x_n]: q\in \PP^1\}\subset \Delta_{0: s}\subset \mm_{g, n}.$$
Clearly, $\Gamma_{0: s}\cdot \ff_{g, m}=s c+(s-2)\ b_{0: s}-s\ b_{0: s-1}$. We claim $\Gamma_{0: s}\cap \ff_{g, m}=\emptyset$.
Assume that on the contrary, one can find a point $q\in \PP^1$ and a limit linear series $\mathfrak g^1_d$ on $\PP^1\cup_q C$,
$$l=\bigl((A, V_C), (\OO_{\PP^1}(d), V_{\PP^1})\bigr) \in G^1_d(C)\times G^1_d(\PP^1),$$
together with sections $\sigma_C\in V_C$ and $\sigma_{\PP^1}\in V_{\PP^1}$, satisfying $\mbox{ord}_q(\sigma_C)+\mbox{ord}_q(\sigma_{\PP^1})\geq d$ and
$$\mbox{div}(\sigma_C)\geq x_{s+1}+\cdots+x_n, \ \ \  \   \mbox{div}(\sigma_{\PP^1})\geq x_1+\cdots+x_s.$$
Since $\sigma_{\PP^1}\neq 0$, one finds that $\mbox{ord}_q(\sigma_{\PP^1})\leq g-m-s$, hence by compatibility, $\mbox{ord}_q(\sigma_C)\geq s$.
We claim that this is impossible, that is, $H^0\bigl(C, A\otimes \OO_C(-sq-x_1-\cdots-x_n)\bigr)\neq 0$, for every $A\in W^1_{g-m}(C)$. Indeed, by letting all points $x_{s+1}, \ldots, x_n, q\in C$ coalesce, the statement $H^0\bigl(C, A\otimes \OO_C(-(g-2m)\cdot q)\bigr)=0$, for a general $[C, q]\in \mm_{g, 1}$ is a consequence of the "pointed" Brill-Noether theorem as proved in \cite{EH} Theorem 1.1. This shows that
$$0=\Gamma_{0:s}\cdot \ff_{g, m}=sc+(s-2)b_{0: s}-sb_{0:s-1},$$
for $3\leq s\leq n$, which determines recursively all coefficients $b_{0:s}$. The remaining coefficients $b_{i:s}$ with $1\leq i\leq [g/2]$ can be determined via similar test curve calculations, but we skip these details.
\end{proof}

Keeping the notation from the proof of Theorem \ref{misc}, a direct consequence is the calculation of the class of the divisor $\F_{g, m}[C]:=\pi_1(\Sigma)$ inside  $C_{g-2m}$. This offers an alternative proof of \cite{Mus} Proposition III; furthermore the proof of Theorem \ref{misc} answers in the affirmative the question raised
in \cite{Mus}, concerning whether the cycle $\F_{g, m}[C]$ has expected dimension, and thus, it is a divisor on $C_{g-2m}$.

We denote by $\theta \in H^2(C_{g-2m}, \mathbb Q)$ the class of the pull-back of the theta divisor, and by $x\in H^2(C_{g-2m}, \mathbb Q)$ the class of the locus
$\{p_0+D:D\in C_{g-2m-1}\}$ of effective divisors containing a fixed point $p_0\in C$. For a very general curve $[C]\in \cM_g$, the group
$N^1(C_{g-2m})_{\mathbb Q}$ is generated by $x$ and $\theta$, see \cite{ACGH}.

Let $\widetilde{\mathcal{F}}_{g, m}$ be the effective divisor on $\cc_{g, g-2m}$ to which $\ff_{g, m}$ descends, that is, $\pi^*(\widetilde{\mathcal{F}}_{g, m})=\ff_{g, m}$. The class of $\tf_{g, m}$ is completely determined by Theorem \ref{misc}.

\begin{corollary}\label{symmclass}
 Let $[C]\in \cM_g$ be a general curve. The cohomology class of the divisor
$$\F_{g, m}[C]:=\{D\in C_{g-2m}: \exists A\in W^1_{g-m}(C) \ \  such \  that   \ \ H^0(C, A\otimes \OO_C(-D))\neq 0\}$$ is equal to
$(1-\frac{2m}{g}){g\choose m}\bigl(\theta-\frac{g}{g-2m}x\bigr)$. In particular, the class $\theta-\frac{g}{g-2m}x\in N^1(C_{g-2m})_{\mathbb Q}$ is effective.
\end{corollary}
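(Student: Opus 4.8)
Set $n:=g-2m$ and $d:=g-m$ as in Theorem~\ref{misc}. Since $N^1(C_{g-2m})_{\mathbb Q}=\langle x,\theta\rangle$ for very general $[C]$ (cf. \cite{ACGH}), it suffices to pin down two coefficients; write $[\F_{g,m}[C]]=\mu\,\theta+\nu\,x$. The plan is to realize $\F_{g,m}[C]$ as the restriction of the universal divisor $\tf_{g,m}$ of Theorem~\ref{misc} to the fibre of $\varphi:\cc_{g,n}\to\cM_g$ over $[C]$, and then to translate the tautological coefficients $c=\tfrac{n-1}{g-1}\binom{g-1}{d-1}$ and $b_{0:2}=\tfrac{n^2-g+2gn-2n}{(g-1)(g-d)}\binom{g-1}{d}$ into the basis $\{x,\theta\}$.

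First I would set up the restriction dictionary. Over a general $[C]$ the fibre $F$ of $\varphi$ carries a birational morphism $F\to C_{g-2m}$ which is an isomorphism away from the locus where at least three marked points coincide (where bubbling produces positive–dimensional moduli) and contracts that locus; hence $[\F_{g,m}[C]]$ is the push–forward of $\tf_{g,m}|_F$ along this morphism. Under this push–forward every class that either restricts trivially to $F$ or is exceptional dies: $\widetilde\lambda$, $\widetilde\delta_{\mathrm{irr}}$, all $\widetilde\delta_{i:s}$ with $i\geq 1$, and all $\widetilde\delta_{0:s}$ with $s\geq 3$ contribute nothing. Only two classes survive. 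The class $\widetilde\psi$ descends cleanly: on the reduced locus it is $c_1\bigl(\bigotimes_i T^*_{x_i}C\bigr)$, the descent of $\bigotimes_i p_i^*K_C$ on $C^n$, so it restricts to $(2g-2)x$. The branch class $\widetilde\delta_{0:2}$ restricts to one half of the diagonal, because $\pi$ is simply ramified along $\widetilde\Delta_{0:2}$; using the standard expression $[\Delta_{C_{g-2m}}]=2\bigl((n+g-1)x-\theta\bigr)$ (cf. \cite{ACGH}) this gives $(n+g-1)x-\theta$.

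Next I would assemble the computation. Converting the formula of Theorem~\ref{misc} into descended classes via $\sum_i\psi_i=\pi^*\bigl(\widetilde\psi+\sum_s s\,\widetilde\delta_{0:s}\bigr)$ shows that the coefficient of $\widetilde\delta_{0:2}$ in $\tf_{g,m}$ is $-(b_{0:2}-2c)$, so restricting to the fibre yields
$$[\F_{g,m}[C]]=c(2g-2)x-(b_{0:2}-2c)\bigl((n+g-1)x-\theta\bigr).$$
Reading off $\mu=b_{0:2}-2c$ and $\nu=c(2g-2)-(b_{0:2}-2c)(n+g-1)$ and substituting the explicit $c$ and $b_{0:2}$, a short manipulation of binomial coefficients (using $n+2m=g$ and $\binom{g}{m}=\tfrac{g}{m}\binom{g-1}{m-1}$) collapses $\mu$ to $\tfrac{n}{g}\binom{g}{m}$ and $\nu$ to $-\binom{g}{m}$; this is exactly $[\F_{g,m}[C]]=\bigl(1-\tfrac{2m}{g}\bigr)\binom{g}{m}\bigl(\theta-\tfrac{g}{g-2m}x\bigr)$. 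I would record the identity $\mu g+\nu=(n-1)\binom{g}{m}$ as an independent check: its left–hand side is the intersection of $[\F_{g,m}[C]]$ with the moving–point curve $\Gamma=\{D_0+p:p\in C\}$ (for which $\Gamma\cdot x=1$, $\Gamma\cdot\theta=g$), while the right–hand side is precisely the number $C_{x_n}\cdot\ff_{g,m}$ computed in the proof of Theorem~\ref{misc}. Effectivity of $\theta-\tfrac{g}{g-2m}x$ is then immediate, since $\F_{g,m}[C]=\pi_1(\Sigma)$ is an honest effective divisor of the expected dimension and the scalar $\bigl(1-\tfrac{2m}{g}\bigr)\binom{g}{m}$ is positive for $m\leq g/2$.

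The step I expect to be the main obstacle is the restriction dictionary of the second paragraph: one must correctly track the behaviour of the $\psi$–classes and of $\delta_{0:2}$ along the diagonal, i.e. the bubbling that occurs when two marked points collide. It is exactly here that the clean descent $\widetilde\psi\mapsto(2g-2)x$ and the factor $\tfrac12$ in $\widetilde\delta_{0:2}\mapsto\tfrac12[\Delta_{C_{g-2m}}]$ enter; dropping the diagonal contribution, or miscounting the branching of $\pi$ along $\widetilde\Delta_{0:2}$, alters both $\mu$ and $\nu$, and the consistency relation $\mu g+\nu=(n-1)\binom{g}{m}$ is what guards against such an error.
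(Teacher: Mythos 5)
Your proposal is correct and follows essentially the same route as the paper: there one restricts $\tf_{g, m}$ to the fibre over $[C]$ via the rational map $u:C_{g-2m}\dashrightarrow \cc_{g, g-2m}$ (defined away from codimension $2$), and uses the dictionary $u^*(\widetilde{\delta}_{0: 2})=\delta_C\equiv -\theta+(2g-2m-1)x$ (MacDonald's formula, via Kouvidakis) and $u^*(\widetilde{\psi})\equiv \theta+\delta_C+(2m-1)x$ (Kouvidakis), the latter being exactly your $(2g-2)x$. The only soft spot in your write-up is the justification of $u^*(\widetilde{\psi})=(2g-2)x$: knowing the restriction of $\widetilde{\psi}$ to the locus of reduced divisors determines the pulled-back class only modulo the class of the diagonal, so by itself it does not exclude a correction term $t\,\delta_C$; the paper sidesteps this by citing Kouvidakis's exact formula, whereas in your argument the gap is effectively closed by your consistency check $\mu g+\nu=(n-1){g\choose m}$, since a correction $t\,\delta_C$ would shift that quantity by $tc(n-1)\neq 0$.
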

\begin{proof} Let $u:C_{g-2m}\dashrightarrow \cc_{g, g-2m}$ be the rational map given by $$u(x_1+\cdots+x_{g-2m})=[C, x_1+\cdots+x_{g-2m}].$$ Note that $u$ is well-defined outside the codimension $2$ locus of effective divisors with support of length at most $g-2m-2$. We have that $u^*(\widetilde{\delta}_{0: 2})=\delta_C$, where $\delta_C:=[\Delta_C]/2$ is the reduced diagonal. Its class is given by the MacDonald formula, cf. \cite{K1} Lemma 7:
$$\delta_C\equiv -\theta+(2g-2m-1)x.$$
 Furthermore, $u^*(\widetilde{\psi})\equiv \theta+\delta_C+(2m-1)x$, see \cite{K} Proposition 2.7. Thus $\F_{g, m}[C]\equiv
u^*([\tf_{g, m}])$, and the conclusion follows after some calculations.
\end{proof}
\vskip 3pt

The divisor $\tf_{g, m}$ is defined in terms of a correspondence between pencils and effective divisors on curves, and it is fibred in curves as follows: we fix a complete pencil
$A\in W^1_{g-m}(C)$ with only simple ramification points. The variety of secant divisors
$$V_{g-2m}^1(A):=\{D\in C_{g-2m}: H^0(C, A\otimes \OO_C(-D))\neq 0\}$$
is a curve disjoint from the indeterminacy locus of the map $u:C_{g-2m}\dashrightarrow \cc_{g, g-2m}$, see \cite{F1}. We set $\Gamma_{g-2m}(A):=u(V_{g-2m}^1(A))\subset \cc_{g, g-2m}$. By varying $[C]\in \cM_g$ and $A\in W^1_{g-m}(C)$, the curves $\Gamma_{g-2m}(A)$ fill-up the divisor $\tf_{g, m}$. It is natural to test the extremality of $\tf_{g, m}$ by computing the intersection number $\Gamma_{g-2m}(A)\cdot \tf_{g, m}$.
To state the next result in a unified form, we adopt the convention  ${a\choose b}:=0$ whenever $b<0$.

\begin{proposition}\label{extremality}
For all integers $1\leq m< g/2$, we have the formula:
$$\Gamma_{g-2m}(A)\cdot \tf_{g, m}=(m-1){g-m-2\choose m}{g\choose  m}.$$
In particular, $\Gamma_{g-2}(A)\cdot \tf_{g, 1}=0$, and the divisor $\tf_{g, 1}\in \mathrm{Eff}(\cc_{g, g-2})$ is extremal.
\end{proposition}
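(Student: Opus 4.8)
The plan is to push the whole computation down to the symmetric product $C_{g-2m}$ of the fixed curve via the map $u$, where the class of $\tf_{g,m}$ is already known. Since the excerpt records that $V^1_{g-2m}(A)$ is disjoint from the indeterminacy locus of $u$, and $u$ is birational onto its image (a general $[C]\in\cM_g$ carries no nontrivial automorphism), the projection formula gives
$$\Gamma_{g-2m}(A)\cdot \tf_{g,m}=V^1_{g-2m}(A)\cdot u^*\bigl([\tf_{g,m}]\bigr)=V^1_{g-2m}(A)\cdot \F_{g,m}[C],$$
where by Corollary \ref{symmclass} one has $\F_{g,m}[C]=\bigl(1-\tfrac{2m}{g}\bigr)\binom{g}{m}\bigl(\theta-\tfrac{g}{g-2m}x\bigr)$. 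Everything thus reduces to determining the numerical class of the secant curve $V^1_{g-2m}(A)$ in $N_1(C_{g-2m})$, i.e. to computing the two degrees $V^1_{g-2m}(A)\cdot x$ and $V^1_{g-2m}(A)\cdot\theta$.

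The $x$-degree is a direct fibre count. Let $f\colon C\to\PP^1$ be the degree $g-m$ morphism attached to $A$; a point of $V^1_{g-2m}(A)$ is a subdivisor $D\le D_t$ of a fibre $D_t=f^{-1}(t)$, and it contains a fixed general $p_0\in C$ exactly when $t=f(p_0)$, the remaining $g-2m-1$ points of $D$ being chosen among the other $g-m-1$ points of that fibre. Hence $V^1_{g-2m}(A)\cdot x=\binom{g-m-1}{m}$. For the $\theta$-degree I would instead first compute the intersection with the diagonal and convert through the Macdonald relation $\delta_C\equiv -\theta+(2g-2m-1)x$. A subdivisor $D\le D_t$ is non-reduced only when the fibre $D_t$ is, that is only over the $4g-2m-2$ simple branch points of $f$ (Riemann–Hurwitz); over such a point the fibre is $2r+p_1+\cdots+p_{g-m-2}$, and the non-reduced subdivisors are those containing $2r$, giving $\binom{g-m-2}{m}$ branches of $V^1_{g-2m}(A)$ passing through $\Delta_C$. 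In the local $\mathrm{Sym}^2$-chart $(e_1,e_2)$, the colliding pair traces $(e_1,e_2)=(0,-s)$ while the reduced discriminant is $\{e_1^2-4e_2=0\}$; pulling back $e_1^2-4e_2=4s$ shows each branch meets $\Delta_C$ with multiplicity $1$. Thus $V^1_{g-2m}(A)\cdot\Delta_C=(4g-2m-2)\binom{g-m-2}{m}$, and since $\delta_C=[\Delta_C]/2$ we get $V^1_{g-2m}(A)\cdot\delta_C=(2g-m-1)\binom{g-m-2}{m}$, whence
$$V^1_{g-2m}(A)\cdot\theta=(2g-2m-1)\binom{g-m-1}{m}-(2g-m-1)\binom{g-m-2}{m}.$$

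Substituting both degrees into the formula for $\F_{g,m}[C]$, the quantity $\tfrac{g-2m}{g}\bigl(V^1_{g-2m}(A)\cdot\theta\bigr)-\bigl(V^1_{g-2m}(A)\cdot x\bigr)$ is to be simplified; using $\binom{g-m-1}{m}=\tfrac{g-m-1}{g-2m-1}\binom{g-m-2}{m}$ and factoring the resulting numerator as $(m-1)(g-2m-1)$ collapses it to $(m-1)\binom{g-m-2}{m}$, and multiplying by $\binom{g}{m}$ produces the asserted value $(m-1)\binom{g-m-2}{m}\binom{g}{m}$. For $m=1$ this vanishes. Since the curves $\Gamma_{g-2}(A)$ sweep out $\tf_{g,1}$ as $[C]$ and $A$ vary and meet it in degree $0$, the divisor $\tf_{g,1}$ is rigid and spans an extremal ray of $\mathrm{Eff}(\cc_{g,g-2})$ by the covering-family principle used for Theorem \ref{extremal}.

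The main obstacle is the $\theta$-degree, equivalently the diagonal count: one must argue that the only non-reduced members of the family occur over the simple branch points of $f$, and evaluate correctly the local intersection multiplicity of the secant curve with the reduced diagonal — in particular the factor $2$ relating $\Delta_C$ and $\delta_C=[\Delta_C]/2$ must be tracked carefully (a useful internal check is that the resulting $\theta$-degree equals $g$ when $m=1$, as it must, since then $V^1_{g-2}(A)$ is a translate of $\pm C$ inside $\mathrm{Pic}^{g-2}(C)$). A secondary delicate point is deducing genuine extremality in the case $m=1$ from an intersection number equal to $0$ rather than strictly negative, so that one invokes the rigidity of a divisor swept out by a covering family of non-positive degree rather than the simpler strictly-negative criterion.
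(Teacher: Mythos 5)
Your proof is correct and arrives at the stated formula, but its computational core takes a genuinely different route from the paper's. The reduction step is the same: both you and the paper use push-pull together with Corollary \ref{symmclass} to rewrite $\Gamma_{g-2m}(A)\cdot\tf_{g,m}$ as $\F_{g,m}[C]\cdot[V^1_{g-2m}(A)]$ inside $C_{g-2m}$, and both conclude extremality of $\tf_{g,1}$ from the covering family. The paper then quotes Porteous' formula (see \cite{ACGH}, p. 342) for the full cohomology class of the secant variety,
$$[V^1_{g-2m}(A)]\equiv \sum_{j=0}^{g-2m-1}{-m-1\choose j}\frac{x^j\cdot\theta^{g-2m-1-j}}{(g-2m-1-j)!},$$
and contracts it against $\F_{g,m}[C]$ via the identities $x^k\theta^{g-2m-k}=g!/(2m+k)!$. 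You instead determine only the numerical class of the secant \emph{curve}, namely the two degrees $V^1_{g-2m}(A)\cdot x={g-m-1\choose m}$ (fibre count) and $V^1_{g-2m}(A)\cdot\theta=(2g-2m-1){g-m-1\choose m}-(2g-m-1){g-m-2\choose m}$ (a count over the $4g-2m-2$ simple branch points, with ${g-m-2\choose m}$ transverse diagonal points over each, converted through MacDonald's relation $\delta_C\equiv-\theta+(2g-2m-1)x$, the factor $2$ between $[\Delta_C]$ and $\delta_C$ being tracked correctly). Since a divisor class pairs only with the numerical curve class, this suffices; your two degrees agree with those extracted from the Porteous expression (for instance, for $g=7$, $m=2$ both methods give $6$ and $21$, hence the value $63={3\choose 2}{7\choose 2}$), your local multiplicity-one computation in the $\mathrm{Sym}^2$ chart is sound, and your simplification to $(m-1){g-m-2\choose m}{g\choose m}$ checks out. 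What your route buys is self-containedness (no secant-class formula needed) and an internal consistency check at $m=1$; what the paper's buys is brevity modulo the citation. One last remark: both you and the paper deduce extremality of $\tf_{g,1}$ from a covering family meeting it in degree exactly \emph{zero}; as you rightly flag, this needs the non-positive-degree version of the covering-family criterion (zero intersection directly shows only non-bigness), and your treatment of this point is at the same level of rigor as the paper's own.
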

\begin{proof}
This is an immediate application of Corollary \ref{symmclass}. The class $[V_{g-2m}^1(A)]$ can be computed using Porteous' formula, see \cite{ACGH} p.342:
$$[V^1_{g-2m}(A)]\equiv \sum_{j=0}^{g-2m-1}{-m-1\choose j} \frac{x^j\cdot \theta^{g-2m-j-1}}{(g-2m-1-j)!}\in H^{2(g-2m-1)}(C_{g-2m}, \mathbb Q).$$
Using the push-pull formula, we write $\Gamma_{g-2m}(A)\cdot \tf_{g, m}=\F_{g, m}[C]\cdot [V_{g-2m}^1(A)]$, then estimate the product using the identity $x^k\theta^{g-2m-k}=g!/(2m+k)!\in H^{2(g-2m)}(C_{g-2m}, \mathbb Q)$ for $0\leq k\leq g-2m$. For $m=1$, observe that $\Gamma_{g-2}(A)\cdot \tf_{g, 1}=0$. Since the curves of type $\Gamma_{g-2}(A)$ cover $\tf_{g, 1}$, this implies that $[\tf_{g, 1}]\in \mathrm{Eff}(\cc_{g, g-2})$ generates an extremal ray.
\end{proof}
\vskip 4pt

We can use Theorem \ref{misc} to describe the birational type of $\cc_{g, n}$ when $12\leq g\leq 21$ and $1\leq n\leq g-2$. We recall that when $g\leq 9$, the space $\cc_{g, n}$ is uniruled for all values of $n$. The transition cases $g=10, 11$, as well as the case of the universal Jacobian $\cc_{g, g}$, are discussed in detail in \cite{FV2}. Furthermore $\cc_{g, n}$ is uniruled when $n\geq g+1$; in this case the symmetric product $C_n$ of any curve $[C]\in \cM_g$ is birational to a $\PP^{n-g}$-bundle over the Jacobian $\mbox{Pic}^n(C)$. Our main result is that, in the range described above, $\cc_{g, n}$ is of general type in all the cases when $\mm_{g, n}$ is known to be of general type, see \cite{Log}, \cite{F2}. We note however that the divisors $\ff_{g, m}$ only carry one a certain distance towards a full solution. The classification of $\cc_{g, n}$ is complete only when $n\in \{g-1, g-2, g\}$.

\begin{theorem}\label{cgn}
For integers $g=12, \ldots, 21$, the universal symmetric product $\cc_{g, n}$ is of
general type for all $f(g)\leq n\leq g-1$, where $f(g)$ is described in the
following table.
\begin{center}
\begin{tabular}{c|cccccccccc}
$g$ & 12& 13& 14& 15& 16& 17& 18& 19& 20&
21\\
\hline $f(g)$ & 10& 11& 10& 10& 9&
9&
9& 7& 6& 4\\
\end{tabular}
\end{center}
\end{theorem}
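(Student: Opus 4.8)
The plan is to apply the machinery developed in the preceding sections, specializing it to each genus $g$ in the range $12\leq g\leq 21$ and each degree $n$ with $f(g)\leq n\leq g-1$. The fundamental principle, as established via \cite{FV2}, is that $\cc_{g, n}$ is of general type as soon as the canonical class $K_{\cc_{g, n}}$ is big, since the singularities of $\cc_{g, n}$ impose no adjoint conditions; thus it suffices to express $\pi^*(K_{\cc_{g, n}})=K_{\mm_{g, n}}-\delta_{0: 2}$ as a positive $\mathbb Q$-combination of the nef class $\sum_{i=1}^n \psi_i$ (which descends to a big and nef class by Proposition \ref{bigclass}), an effective class pulled back from $\mm_g$, the pullback of an effective divisor on $\cc_{g, n}$, and boundary classes $\delta_{i: c}$ with $c\geq 2$. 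The explicit effective divisor to be used is $\ff_{g, m}$ from Theorem \ref{misc}, where the relation $n=g-2m$ selects the appropriate $m$; this divisor is available precisely when $g-n$ is even, and it satisfies the key inequality $3c<b_{0: 2}$ guaranteeing that it contributes a $\delta_{0: 2}$-coefficient large enough to absorb the branching correction.

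First I would fix $g$ and, for each admissible $n$, combine the formula for $\ff_{g, m}$ with a suitable effective divisor $D\in \mathrm{Eff}(\mm_g)$ of small slope pulled back via $\phi$. Concretely, I would write the target class as
$$\pi^*(K_{\cc_{g, n}})-\alpha\, [\ff_{g, m}]-\beta\, \phi^*(D)-\gamma\sum_{i=1}^n\psi_i$$
for nonnegative rational numbers $\alpha,\beta,\gamma$, and verify that the residual class lies in the cone spanned by $\lambda$ together with the boundary divisors $\delta_{i: c}$ with $c\geq 2$, taking care that the coefficient of $\delta_{\mathrm{irr}}$ and of each $\delta_{0: s}$ comes out nonnegative. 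The $\lambda$-coefficient is the delicate one: its sign hinges on the slope bound $s(\mm_g)$, which by the discussion following Corollary \ref{condition} is controlled using Brill-Noether or Gieseker-Petri divisors when $g+1$ is composite, and the divisor of \cite{FV1} when $g=12$. The table entry $f(g)$ records exactly the smallest $n$ for which this positive combination can be arranged, the constraint tightening as $n$ decreases because the coefficient $c=\frac{n-1}{g-1}\binom{g-1}{d-1}$ of $\sum\psi_i$ in $\ff_{g, m}$ shrinks relative to the boundary contributions.

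When $g-n$ is odd the divisor $\ff_{g, m}$ is unavailable, and here I would instead use the antiramification-type construction adapted to degree $n$: one passes to a related moduli space by adding or forgetting a marked point, or uses the odd-degree analogue of the secant-divisor locus, reducing to the even case at the cost of one unit in $n$. This is why certain table entries jump; the parity of $g-n$ dictates which effective divisor is in hand. The main obstacle I expect is purely arithmetic bookkeeping: checking, genus by genus and degree by degree, that a single linear combination simultaneously makes the $\lambda$-coefficient and all boundary coefficients nonnegative, given that the binomial coefficients in $\ff_{g, m}$ and the slope bound interact in a way that has no uniform closed form across the whole range. In practice this is a finite verification — ten values of $g$, each with a short interval of admissible $n$ — so rather than a conceptual difficulty it is a matter of tabulating the relevant coefficients, identifying for each $g$ the optimal auxiliary divisor $D$, and confirming that the threshold $f(g)$ in the table is precisely where the positivity of the $\delta_{0: s}$-coefficients (governed by inequality (\ref{ineqnec})) first fails.
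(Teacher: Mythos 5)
For the even case $g-n=2m$ your proposal matches the paper's argument: one checks, for the divisor $\ff_{g,m}$ of Theorem \ref{misc} combined with a small-slope divisor pulled back from $\mm_g$ and the big and nef class $\sum_{i=1}^n\psi_i$, the paper's two sufficient inequalities (\ref{sufficient}); the slope inputs you cite (Brill--Noether and Petri divisors, the divisor of \cite{FV1} for $g=12$) are the ones actually used, supplemented by the Koszul divisor on $\mm_{16}$ and the Petri divisor $\overline{\mathcal{GP}}^1_{18,10}$ on $\mm_{18}$.

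The genuine gap is the case $g-n$ odd, which accounts for roughly half of the pairs $(g,n)$ in the table. Both of your proposed fixes fail as stated. There is no ``odd-degree analogue of the secant-divisor locus'': the condition that some $A\in W^1_{g-m}(C)$ satisfy $H^0\bigl(C, A(-\sum_j x_j)\bigr)\neq 0$ cuts out a divisor only under the numerology $n=g-2m$, $d=g-m$, which forces $g-n$ even. And ``reducing to the even case at the cost of one unit in $n$'' is not a reduction: being of general type propagates neither down a fibration (a general-type variety can fibre over a rational base) nor along the relevant finite quotients, and the naive push-forward $(\phi_{n+1})_*[\ff_{g,m}]$ is a codimension-zero class, not a divisor. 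The paper supplies the missing construction: on $\mm_{g,n+1}$, where $n+1=g-2m$, one intersects $\ff_{g,m}$ with the boundary divisor $\delta_{0:\{j,n+1\}}$ (forcing the $j$-th and $(n+1)$-st marked points to collide), pushes the resulting codimension-two cycle forward under $\phi_j:\mm_{g,n+1}\rightarrow\mm_{g,n}$, which restricts to an isomorphism on that boundary divisor, and symmetrizes:
$$E:=\frac{1}{n+1}\sum_{j=1}^{n+1}(\phi_j)_*\bigl(\ff_{g,m}\cdot\delta_{0:\{j,n+1\}}\bigr)\in\mathrm{Eff}(\mm_{g,n})^{\mathfrak S_n}.$$
Its class, computed from Theorem \ref{misc} together with push-forward formulas for tautological classes, is what replaces $[\ff_{g,m}]$ in the inequality check when $g-n$ is odd; without this (or an equivalent) explicit effective divisor your argument proves nothing for those entries of the table. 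A smaller inconsistency: your final sentence asserts that $f(g)$ is determined by where (\ref{ineqnec}) first fails, while earlier you call the $\lambda$-coefficient ``the delicate one''; in the paper's criterion (\ref{sufficient}) both inequalities must hold simultaneously, and neither is uniformly the binding one across the range $12\leq g\leq 21$.
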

\begin{proof}
The strategy described in the Introduction to prove that  $K_{\cc_{g, g-1}}$ is big applies to the other spaces $\cc_{g, n}$ with $1\leq n\leq g-2$ as well. To show that $\cc_{g, n}$ is of general type, it suffices to produce an effective class on $\cc_{g, n}$ that pulls back via $\pi$ to $a\lambda+c\sum_{i=1}^n \psi_i-b_{\mathrm{irr}}\delta_{\mathrm{irr}}-\sum_{i, s}b_{i: s}\delta_{i: s}\in \mathrm{Eff}(\mm_{g, n})^{\mathfrak{S}_n}$, such that the following conditions are fulfilled:
\begin{equation}\label{sufficient}
\frac{a+s(\mm_g)\bigl(2c-b_\mathrm{irr}\bigr)}{13c}<1 \mbox{ } \   \ \mbox{ and } \ \mbox{  } \ \frac{b_{0:2}}{3c}>1.
\end{equation}

 When $g-n$ is even, we write $g-n=2m$, and for all entries in the table above one can express $K_{\cc_{g, n}}$ as a positive combination of $\sum_{i=1}^n \psi_i$, $[\ff_{g, m}]$, $\varphi^*(D)$, where $D\in \mathrm{Eff}(\mm_g)$, and other boundary classes.

If $g-n=2m+1$ with $m\in \mathbb Z_{\geq 0}$,  for each integer $1\leq j\leq n+1$, we denote by $\phi_j:\mm_{g, n+1}\rightarrow \mm_{g, n}$ the projection forgetting the $j$-th marked point and consider the effective $\mathfrak S_n$-invariant effective $\mathbb Q$-divisor class on $\mm_{g, n}$
$$E:=\frac{1}{n+1}\sum_{j=1}^{n+1} (\phi_j)_*\bigl([\ff_{g, m}]\cdot \delta_{0: \{j, n+1\}}\bigr)\in \mbox{Eff}(\mm_{g, n}).$$
Using Theorem \ref{misc} as well as elementary properties of push-forwards of tautological classes, $K_{\cc_{g, n}}$ is expressible as a positive $\mathbb Q$-combination of boundaries, $E$, a pull-back of an effective divisor on $\mm_g$, and the big and nef class $\sum_{i=1}^n \psi_i$ precisely in the cases appearing in the table.
\end{proof}
\begin{remark} When $g\notin \{12, 16, 18\}$, the bound $s(\mm_g)\leq 6+12/(g+1)$, emerging from the slope of the Brill-Noether divisors, has been used to verify (\ref{sufficient}). In the remaining cases, we employ the better bounds $s(\mm_{12})=4415/642<6+12/13$ (see \cite{FV1}), and $s(\mm_{16})=407/61<6+12/17$ see \cite{F2}, coming from Koszul divisors on $\mm_{12}$ and $\mm_{16}$ respectively. On $\mm_{18}$, we use the estimate $s(\mm_{18})\leq 302/45$ given by the class of the Petri divisor $\overline{\mathcal{GP}}_{18, 10}^1$, see \cite{EH}. Improvements on the estimate on $s(\mm_g)$ in the other cases will naturally translate into improvements in the statement of Theorem \ref{cgn}.
\end{remark}

\section{The universal Prym theta divisor in genus $6$}

The aim of this last section is to establish the uniruledness of the universal theta divisor $\sigma: \Thet_5^{\mathrm{p}}\rightarrow \cR_6$ over the moduli space $\cR_6$ classifying pairs $[C, \eta]$, where $C$ is a smooth curve of genus $6$ and $\eta\in \mbox{Pic}^0(C)$ is a non-trivial $2$-torsion point. It is proved in \cite{DS} that the Prym map $P:\cR_6\rightarrow \cA_5$ is generically finite of degree $27$, thus in order to conclude that $\Thet_5$ is uniruled it suffices to establish the same conclusion for $\Thet_6^{\mathrm{p}}$.

For a point $[C, \eta]\in \cR_g$, we denote by $f:\tilde{C}\rightarrow C$ the unramified double cover induced by $\eta$, and by $\iota:\tilde{C}\rightarrow \tilde{C}$ the involution interchanging the sheets of $f$. Setting $\PP:=\PP H^0(K_{\tilde{C}})^{\vee}$, we view
$\PP^+:=\PP H^0(K_C)^{\vee}$ as a subset of $\PP$.
If
$$\mbox{Nm}_f:\mbox{Pic}^{2g-2}(\tilde{C})\rightarrow \mbox{Pic}^{2g-2}(C)$$ is the norm map, then $P(C, \eta):=\mbox{Nm}_f^{-1}(K_C)^{+}$ and one has the following realization for the Prym theta divisor:
$$\Xi(C, \eta):=\bigl\{L\in \mbox{Pic}^{2g-2}(\tilde{C}): \mathrm{Nm}_f(L)=K_C,\ h^0(\tilde{C}, L)\geq 2, \ \ h^0(\tilde{C}, L)\mbox{ is even}\bigr\}.$$
The universal Prym theta divisor $\Thet_g^{\mathrm{p}}$ is the parameter space of triples $[C, \eta, L]$, where $[C, \eta]\in \cR_g$ and $L\in \Xi(C, \eta)$ and $\sigma([C, \eta, L])=[C, \eta]$.

We fix a general element $[C, \eta, L]\in \Thet_g^{\mathrm{p}}$, where $h^0(\tilde{C}, L)=2$. The set of divisors
$$\bigl\{f_*(D): D\in |L|\bigr\}\subset |K_C|$$ can be viewed as a conic in a $(g-1)$-dimensional projective space. Following \cite{Ve2}, to this conic one can associate its dual hypersurface
$Q_L\in \PP\mbox{Sym}^2 H^0(K_C)$ which is a rank $3$ quadric. Alternatively, viewing $L\in W^1_{2g-2}(\tilde{C})$ as a singular point of the Riemann theta divisor $\Theta_{\tilde{C}}$, we consider the projectivized tangent cone $\tilde{Q}_L\supset \tilde{C}$ of $\Theta_{\tilde{C}}$ at the point $\tilde{L}$, and then $Q_L:=\tilde{Q}_L\cap \PP^+$.
In coordinates, if $(s_1, s_2)$ is a basis of $H^0(\tilde{C}, L)$, we have the following concrete description of the two quadrics:
$$\tilde{Q}_L:\bigl( s_1\cdot \iota^*(s_1)\bigr)\cdot \bigl(s_2\cdot \iota^*(s_2)\bigr)-\bigl(s_1\cdot \iota^*(s_2)\bigr)\cdot \bigl(s_2\cdot \iota^*(s_1)\bigr)=0$$
and
$$Q_L:\ 4\bigl( s_1\cdot \iota^*(s_1)\bigr)\cdot \bigl(s_2\cdot \iota^*(s_2)\bigr)-\bigl(s_1\cdot \iota^*(s_2)+s_2\cdot \iota^*(s_1)\bigr)^2=0.$$
We summarize the properties of $Q_L$ and refer to \cite{Ve2} for details:
\begin{proposition}\label{rk3}
For a general point $[C, \eta, L]\in \Thet_g^{\mathrm{p}}$, there exists a rank $3$ quadric $Q_L\subset \PP H^0(K_C)^{\vee}$ satisfying the following properties:
\begin{enumerate}
\item The rulings of $\tilde{Q}_L$ cut out the pencils $L$ and $\iota^*(L)$ on the curve $\tilde{C}$.
\item $Q_L\cdot C=2d_L$, for some Prym canonical divisor $d_L\in |K_C\otimes \eta|$.
\item The divisor $f^*(d_L)$ consists of points $\tilde{x}\in \tilde{C}$ such that $h^0\bigl(\tilde{C}, L(-\tilde{x}-\iota(\tilde{x}))\bigr)=1$.
\item For a general divisor $d\in |K_C\otimes \eta|$, there exists $L\in \Xi(C, \eta)$ such that $d=d_L$.
\end{enumerate}
\end{proposition}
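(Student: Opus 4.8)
The plan is to extract properties (i)--(iii) directly from the explicit equations of the two quadrics, and to treat the surjectivity statement (iv) by identifying the construction with the Gauss map of the Prym theta divisor. Write $V:=H^0(\tilde{C}, K_{\tilde{C}})$, with its eigenspace decomposition $V=V^+\oplus V^-$ under $\iota^*$, where $V^+=f^*H^0(C,K_C)$ and $V^-\cong H^0(C,K_C\otimes\eta)$ (this is the splitting coming from $f_*\OO_{\tilde{C}}=\OO_C\oplus\eta$). Since $f$ is \'etale, $\mathrm{Nm}_f(L)=K_C$ pulls back to $L\otimes\iota^*(L)=K_{\tilde{C}}$, so $\iota^*(L)=K_{\tilde{C}}\otimes L^{-1}$ and all four products $s_i\cdot\iota^*(s_j)$ lie in $V$. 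The three symmetric combinations $u_1:=s_1\iota^*(s_1)$, $u_2:=s_2\iota^*(s_2)$, $u_3:=s_1\iota^*(s_2)+s_2\iota^*(s_1)$ are $\iota$-invariant, hence lie in $V^+$, whereas $\xi:=s_1\iota^*(s_2)-s_2\iota^*(s_1)$ is anti-invariant, i.e.\ a Prym-canonical form $\xi\in V^-=H^0(C,K_C\otimes\eta)$; this $\xi$ is the geometric heart of the whole computation.

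I would obtain (i) from the Riemann--Kempf singularity theorem for $\Theta_{\tilde{C}}\subset\mathrm{Pic}^{2g-2}(\tilde{C})$. As $h^0(\tilde{C},L)=2$, the point $[L]$ is a double point of $\Theta_{\tilde{C}}$ whose projectivized tangent cone is the determinantal quadric $\det\bigl[\,s_i\cdot\iota^*(s_j)\,\bigr]=0$, which is precisely $\tilde{Q}_L$, of rank $4$ for general $[C,\eta,L]$. A point of $\tilde{Q}_L$ corresponds to a rank-one value matrix, and the two factorizations of such a matrix give the two rulings: fixing a section $s\in\langle s_1,s_2\rangle=H^0(\tilde{C},L)$ forces $\mathrm{div}(s)\in|L|$ to span a ruling $(g-3)$-plane, and symmetrically for $|\iota^*(L)|$. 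Thus the two rulings cut out $|L|$ and $|\iota^*(L)|$ on the canonical curve $\tilde{C}$, which is (i).

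Properties (ii), (iii) and the rank assertion then follow from one elementary identity. As a quadratic form on $H^0(C,K_C)^{\vee}$ the quadric $Q_L=4u_1u_2-u_3^2$ involves only the three invariant forms $u_1,u_2,u_3\in V^+=f^*H^0(C,K_C)$; for general $[C,\eta,L]$ these are linearly independent in $H^0(C,K_C)$, so the shape $4XY-Z^2$ shows that $Q_L$ has rank exactly $3$. To compute $C\cdot Q_L$ I restrict the defining polynomial to the canonical curve $C\subset\PP^+$ and use commutativity of the product of global sections: the expression $4(s_1\iota^*s_1)(s_2\iota^*s_2)-\bigl(s_1\iota^*s_2+s_2\iota^*s_1\bigr)^2$ collapses to $-\bigl(s_1\iota^*s_2-s_2\iota^*s_1\bigr)^2=-\xi^2$ in $H^0(\tilde{C},K_{\tilde{C}}^{\otimes 2})$, which descends to $-\xi^2\in H^0(C,K_C^{\otimes 2})$. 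Hence $Q_L|_C$ is minus the square of the Prym-canonical form, so $C\cdot Q_L=2\,\mathrm{div}_C(\xi)=2d_L$ with $d_L\in|K_C\otimes\eta|$; this is (ii), and $Q_L$ is everywhere tangent to $C$ along $d_L$ precisely because $-\xi^2$ is a perfect square. For (iii), the vanishing $\xi(\tilde{x})=s_1(\tilde{x})\iota^*s_2(\tilde{x})-s_2(\tilde{x})\iota^*s_1(\tilde{x})=0$ is exactly the degeneracy of the values of $s_1,s_2$ at $\tilde{x}$ and $\iota\tilde{x}$, i.e.\ the existence of a section of $L$ vanishing at both; since $h^0(\tilde{C},L)=2$, this is equivalent to $h^0\bigl(\tilde{C},L(-\tilde{x}-\iota\tilde{x})\bigr)=1$, so $f^*(d_L)=\{\xi=0\}$ is the stated locus.

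The remaining statement (iv) is the genuine obstacle, and I would isolate it. The assignment $L\mapsto d_L$ is a rational map $\psi:\Xi(C,\eta)\dashrightarrow|K_C\otimes\eta|$ between two irreducible varieties of the same dimension $g-2$, so (iv) amounts to the dominance, equivalently the generic finiteness, of $\psi$. The clean route is to identify $\psi$ with the Gauss map of the Prym theta divisor: the cotangent space of the principally polarized Prym variety $P(C,\eta)$ at the origin is canonically $H^0(C,K_C\otimes\eta)$, and the conormal computation at a smooth point $L\in\Xi(C,\eta)$ --- carried out through the same tangent-cone description used for (i)--(iii) --- sends $L$ exactly to the Prym-canonical divisor $d_L$. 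Since $P(C,\eta)$ is an indecomposable ppav for general $[C,\eta]$, the Gauss map of its theta divisor is dominant and generically finite onto $|K_C\otimes\eta|$, so $\psi$ is dominant and (iv) follows. The real work is the conormal identification $\psi=\mathcal{G}$; once it is in place, the dominance of the Gauss map of a theta divisor is automatic.
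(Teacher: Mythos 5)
Your proof is correct and follows essentially the route the paper itself takes: the paper sets up precisely this framework---$\tilde{Q}_L$ as the Riemann--Kempf tangent cone of $\Theta_{\tilde{C}}$ at the double point $L$, the same coordinate expressions in $s_i\cdot\iota^*(s_j)$, and $Q_L=\tilde{Q}_L\cap \PP^+$---and then refers to \cite{Ve2} for the proofs of (i)--(iv), where the key point is exactly your identification of $L\mapsto d_L$ with the Gauss map of the Prym theta divisor $\Xi(C,\eta)$. In effect your writeup supplies the details (eigenspace decomposition, the identity $Q_L|_C=-\xi^2$, and dominance of the Gauss map for an indecomposable ppav) that the paper delegates to that reference.
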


Specializing to the case $g=6$, we are in a position to prove that $\Thet_6^{\mathrm{p}}$ is uniruled.
\vskip 4pt

\noindent \emph{Proof of Theorem \ref{gen5}.}
We fix a general point $[C, \eta, L]\in \Thet_6^{\mathrm{p}}$. There exists a smooth del Pezzo surface $S\subset \PP^5$ containing the canonical image of $C$ and such that $C\in |-2K_S|$. Using Proposition \ref{rk3}, to $L\in \Xi(C, \eta)$, one associates a Prym canonical divisor $d_L\in |K_C\otimes \eta|$ and a quadric $Q_L\subset \PP^5$. The curve $C_L:=S\cap Q_L$ is a smooth curve of genus $6$; since it lies on a rank $3$ quadric it is endowed with a vanishing theta-null induced from the ruling of $Q_L$.  Let $\PP_L\subset |\OO_S(2)|$ be the pencil spanned by $C$ and $C_L$ or, equivalently, the pencil on $S$ that on the curve $C$ cuts out the divisor
$2d_L$.

For each smooth curve $C'\in \PP_L$, the fixed divisor $d_L\in \mbox{Div}(C')$ can be viewed as belonging to the linear system $|K_{C'}\otimes \eta'|$, for a certain point of order two $\eta'\in \mbox{Pic}^0(C')[2]$. This induces a point $[C', \eta']\in \cR_6$. Furthermore, if $f':\tilde{C}'\rightarrow C'$ is the induced covering, since $\tilde{C}'\subset \tilde{Q}_L$, the rulings of $\tilde{Q}_L$ cut out a line bundle $L'\in \mbox{Pic}(\tilde{C}')$ such that $\mbox{Nm}_{f'}(L')=K_{C'}$ and $h^0(\tilde{C}', L')\geq 2$. Furthermore, $h^0(\tilde{C}', L')$ is even, because the parity of the dimension of the space of sections of a line bundle with canonical norm remains constant in families, hence $L'\in \Xi(C', \eta')$. It follows that the family of elements $[C', \eta', L']$ defines a rational curve in $\Thet_6^{\mathrm{p}}$ passing through a general point of the moduli space.
\hfill
$\Box$

\end{document}